\newtheorem{theorem}{Theorem}[section]
\newtheorem{corollary}[theorem]{Corollary}
\newtheorem{lemma}[theorem]{Lemma}
\newtheorem{prop}[theorem]{Proposition}
\newtheorem*{theorem*}{Theorem}
\newtheorem*{hyp*}{Hyp(A)}
\newtheorem{definition}[theorem]{Definition}
\newtheorem{remark}[theorem]{Remark}
\newtheorem{example}[theorem]{Example}
\newcommand{\ind}{\mathrm{ind}}
\newcommand{\Tw}{\mathrm{Tw}}
\newcommand{\Char}{\mathrm{ch}}
\newcommand{\ord}{\mathrm{ord}} 
\newcommand{\End}{\mathrm{End}} 
\newcommand{\Ind}{\mathrm{Ind}}
\newcommand{\N}{\mathrm{N}}
\newcommand{\et}{\mathrm{\acute{e}t}}
\newcommand{\cores}{\mathrm{cores}}
\newcommand{\loc}{\mathrm{loc}}
\newcommand{\Q}{\mathbb{Q}}
\newcommand{\C}{\mathbb{C}}
\newcommand{\Cp}{\mathbb{C}_{p}}
\newcommand{\Qp}{\Q_p}
\newcommand{\Zp}{\mathbb{Z}_p}
\newcommand{\Z}{\mathbb{Z}}
\newcommand{\Frob}{\mathrm{Frob}}
\newcommand{\Aut}{\mathrm{Aut}}
\newcommand{\Sel}{\mathrm{Sel}}
\newcommand{\Gal}{\mathrm{Gal}}
\newcommand{\cyc}{\mathrm{cyc}}
\newcommand{\Hom}{\mathrm{Hom}}
\newcommand{\Spec}{\mathrm{Spec}}
\newcommand{\pr}{\mathrm{pr}}
\newcommand{\Hyp}{\mathrm{Hyp}}
\newcommand{\CH}{\mathrm{CH}}
\newcommand{\D}{\mathrm{D}}
\newcommand{\ur}{\mathrm{ur}}
\newcommand{\coker}{\mathrm{coker}}
\title{An Euler system for characters over an imaginary biquadratic field}
\author[J.~Lamplugh]{Jack Lamplugh}
\address[Lamplugh]{Department of Mathematics \\
University College London\\
Gower Street, London WC1E 6BT, UK}
\email{j.lamplugh@ucl.ac.uk}
\date{\today}
\begin{document}

\begin{abstract}
Given a pair of modular forms with complex multiplication by distinct imaginary quadratic fields, the four dimensional Galois representation associated to their Rankin--Selberg convolution is induced from a character over an imaginary biquadratic field $F$. Using the Euler system of Lei, Loeffler and Zerbes we bound a Selmer group associated to this character, over the unique $\Z_{p}^{3}$-extension of $F$.
\end{abstract}

\maketitle

\section{Introduction}
We fix an algebraic closure $\overline{\Q}$ of $\Q$ and a prime $p\geq 5$. We then fix embeddings $\iota_{\infty} : \overline{\Q} \rightarrow \C$ and $\iota_{p}: \overline{\Q} \rightarrow \Cp$, and let $L$ denote a finite extension of $\Qp$ with ring of integers $\mathcal{O}$. Let $F$ be an imaginary biquadratic field for which $p$ splits completely. By class field theory, there exists a unique $\Z_{p}^{3}$-extension of $F$, which we denote by $F_{\infty}$. We further suppose that $\theta : G_{F}:= \Gal(\overline{\Q}/F) \rightarrow \mathcal{O}^{\times}$ is a finite order character, with order coprime to $p$, and we put $F'= \overline{\Q}^{\ker\theta}$ and $\Delta = \Gal(F'/F)$. We also fix a $p$-adic CM-type type $\Sigma$ for $F$, that is to say a subset of the primes of $F$ above $p$ such that $\Sigma$ and $\Sigma':=c(\Sigma)$ form a disjoint union of the primes of $F$ above $p$, where $c$ denotes complex conjugation. Such a choice distinguishes an imaginary quadratic subfield $K$ of $F$ determined by the property that $\Sigma$ is fixed setwise by $\Gal(F/K)$. We denote the other imaginary quadratic subfield of $F$ by $K'$. 

We put $F'_{\infty} = F' F_{\infty}$, and let $G=\Gal(F'_{\infty}/F')\cong \Z_{p}^{3}$. We let $M^{\Sigma'}_{\Sigma}(F'_{\infty})$ denote the maximal abelian $p$-extension of $F'_{\infty}$ which is unramified outside the primes above $\Sigma'$, and for which the primes above $\Sigma$ split completely. We then write $X^{\Sigma'}_{\Sigma}(F'_{\infty})$ for $\Gal(M^{\Sigma'}_{\Sigma}(F'_{\infty}) / F'_{\infty})$, which has the natural structure of a $\Lambda(G)=\Zp[[G]]$-module, as well as an action of $\Delta$. Since $p$ is coprime to $[F':F]$, we have the isotypical decomposition $X^{\Sigma'}_{\Sigma}(F'_{\infty})\otimes \mathcal{O} = \bigoplus_{\chi} X^{\Sigma'}_{\Sigma}(F'_{\infty}) ^{\chi},$ where $\chi$ runs over the characters of $\Delta$. Our aim in this paper is to show that $X^{\Sigma'}_{\Sigma}(F'_{\infty}) ^{\theta}$ is a finitely generated torsion $\Lambda(G)$-module and to show its characteristic ideal is bounded  by an appropriate $p$-adic $L$-function.

We are successful only under some assumptions on $\theta$. Before we state these hypotheses, we introduce some notation. For a character $\chi$ of $G_{F}$ and an element $g\in \Gal(F/\Q)$, $\chi^{g}$ denotes the character $\chi^{g}(\tau)=\chi(\tilde{g}^{-1}\tau \tilde{g})$, where $\tilde{g}\in \Gal(\overline{\Q}/\Q)$ denotes any lift of $g$. We let $\sigma,\sigma' \in \Gal(F/\Q)$, be generators of $\Gal(F/K)$ and $\Gal(F/K')$ respectively. Let $D_{p}$ be the decomposition group in $G_{F}$ corresponding to $\iota_{p}$. Finally we let $\omega$ denote the mod $p$ cyclotomic character $\omega : G_{F} \rightarrow \Aut(\mu_{p})\subset \Z_{p}^{\times}$.

\begin{hyp*} A finite order character $\vartheta$ of $G_{F}$ satisfies hypothesis A if there exists an integer $i$ and finite order characters $\chi$ and $\chi'$ of $G_{K}$ and $G_{K'}$ respectively, which are unramified above $p$ such that (i) $\rho:=\vartheta\omega^{-i}=\chi\chi'$, (ii)  $  \ker(\rho^{\sigma}) \neq \ker(\rho)$, (iii) $\vartheta^{\sigma}_{\mid D_{p}}\neq \vartheta_{\mid D_{p}}$ and (iv) $\vartheta^{\sigma'}_{\mid D_{p}}\neq \vartheta_{\mid D_{p}}$.
\end{hyp*}

 The embedding $\iota_{p}$ determines a prime $\mathfrak{P}$ of $F$ above $p$, and we assume that $\mathfrak{P}\in \Sigma$. We write $\Gamma = \Gal(K_{\infty}/K)$, where $K_{\infty}/K$ is the unique $\Zp$-extension of $K$ unramified outside the prime below $\mathfrak{P}$. We define $\Gamma'$ similarly, with $K$ replaced by $K'$, and we also write $\Gamma_{\cyc}=\Gal(\Q_{\infty}/\Q)$, where $\Q_{\infty}$ denotes the cyclotomic $\Zp$-extension of $\Q$. As explained below the natural map $G \rightarrow \Gamma \times \Gamma' \times \Gamma_{\cyc}$ is an isomorphism.
We assume that  that $\vartheta:= \theta^{-1}\omega$ satisfies $Hyp(A)$, and fix  $\chi,\chi'$ and $i$ modulo $p-1$ as above.  Then we let $I(\chi) \subset  \Lambda_{\mathcal{O}}(\Gamma)$ denote Hida's congruence ideal associated to the Hida family $\sum_{\mathfrak{a}} \chi(\mathfrak{a}) \sigma_{\mathfrak{a}}^{-1}q^{\N_{K/\Q}(\mathfrak{a})}\in \Lambda(\Gamma)[[q]]$. Assuming $\mathcal{O}$ is sufficiently large, there exists a $p$-adic $L$-function $L_{p,i}^{\chi,\chi'}\in I(\chi)^{-1}\Lambda_{\mathcal{O}}(G)$, interpolating the complex $L$-values $L(\vartheta\rho,0)$, as $\rho$ runs over a certain subset of algebraic Hecke characters of $F$ with conductor supported on the primes above $p$. 

\begin{theorem}\label{thm1} Suppose that $\vartheta := \theta^{-1}\omega$ satisfies $Hyp(A)$, and assume that $\chi,\chi'$ and $i$ are chosen as above. Then 
$
X^{\Sigma'}_{\Sigma}(F'_{\infty}) ^{\theta}
$
is a finitely generated torsion $\Lambda_{\mathcal{O}}(G)$-module and 
$$
\Char X^{\Sigma'}_{\Sigma}(F'_{\infty}) ^{\theta} \text{ divides } I(\chi)L_{p,i}^{\chi,\chi'} .
$$
\end{theorem}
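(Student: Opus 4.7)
The plan is to recast $X^{\Sigma'}_{\Sigma}(F'_{\infty})^{\theta}$, via global duality, as the Pontryagin dual of a Greenberg-style Selmer group attached to $\theta$ over $F_{\infty}$, import the Lei--Loeffler--Zerbes Beilinson--Flach Euler system through Artin formalism, run Rubin's Iwasawa-theoretic Euler system machinery to bound this Selmer group, and finally apply the explicit reciprocity law of Kings--Loeffler--Zerbes to identify the image of the bottom class with $I(\chi)L_{p,i}^{\chi,\chi'}$.

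First, using Poitou--Tate global duality together with class field theory for $F'_{\infty}$ and the $\Delta$-isotypic decomposition (available since $[F':F]$ is coprime to $p$), I would identify $X^{\Sigma'}_{\Sigma}(F'_{\infty})^{\theta}$ with the Pontryagin dual of a Selmer group $\Sel_{\Sigma',\Sigma}(F_{\infty}, T)$ for a suitable Galois module $T$ built from $\theta$, equipped with the strict (trivial) local condition at primes in $\Sigma$, the relaxed local condition at those in $\Sigma'$, and the unramified condition elsewhere. The splitting $G \cong \Gamma \times \Gamma' \times \Gamma_{\cyc}$ lets one view $\Lambda_{\mathcal{O}}(G)$ as a three-variable deformation of $T$, with the three $\Zp$-coordinates matching the variables of $L_{p,i}^{\chi,\chi'}$.

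By Hyp(A)(i), twisting by $\omega^{-i}$ identifies $\Ind_{G_{F}}^{G_{\Q}}\vartheta$ with $\Ind_{G_{K}}^{G_{\Q}}\chi \otimes \Ind_{G_{K'}}^{G_{\Q}}\chi'$, which is the four-dimensional Galois representation attached to the Rankin--Selberg convolution of the CM eigenforms associated to $\chi$ and $\chi'$. The Beilinson--Flach Euler system of Lei--Loeffler--Zerbes then yields, via Shapiro's lemma, an Euler system over $F$ for $T$ indexed by square-free ideals built from primes split in $F/\Q$. I would feed these classes into Rubin's Euler system theorem, in its Iwasawa-theoretic form adapted to the $\Zp^{3}$-extension $F_{\infty}/F$, to obtain
$$
\Char X^{\Sigma'}_{\Sigma}(F'_{\infty})^{\theta} \ \text{divides}\ \bigl(\text{image of the bottom class under the Perrin--Riou regulator}\bigr).
$$
Here Hyp(A)(ii) supplies the big-image/non-degeneracy condition on the residual representation required by Rubin's machinery (and ensures the Euler system does not vanish modulo $p$), while (iii) and (iv) guarantee that $T_{\mid D_{p}}$ decomposes into distinct characters, which is needed to define the Coleman/Perrin--Riou regulator and to state the reciprocity law.

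The concluding step is the explicit reciprocity law for Beilinson--Flach classes of Kings--Loeffler--Zerbes, which identifies the regulator of the bottom class, after correction by Hida's congruence ideal $I(\chi)$, with $L_{p,i}^{\chi,\chi'}$; combining with the Rubin bound gives the theorem. The main obstacle, I expect, is bookkeeping: tracking the asymmetric local conditions at $\Sigma$ versus $\Sigma'$ through the Shapiro descent from $F$ to $\Q$ (so that the Selmer group bounded by the Euler system matches the one computed from $X^{\Sigma'}_{\Sigma}(F'_{\infty})^{\theta}$), and verifying Rubin's hypotheses in the higher-rank $\Zp^{3}$-Iwasawa setting. The non-conjugacy conditions built into Hyp(A) are specifically tailored to control both issues.
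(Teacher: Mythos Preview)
Your outline matches the paper in its broad strokes --- identify $X^{\Sigma'}_{\Sigma}(F'_{\infty})^{\theta}$ with the dual of a Greenberg Selmer group, import Beilinson--Flach classes via Artin formalism, and close with the explicit reciprocity law --- but there is a genuine gap at the step where you ``feed these classes into Rubin's Euler system theorem, in its Iwasawa-theoretic form adapted to the $\Zp^{3}$-extension $F_{\infty}/F$.'' The Beilinson--Flach construction, after descent, produces classes only over fields of the form $F_{\infty}K(\mathfrak{r})$ for $\mathfrak{r}$ a squarefree product of split primes of $K$ (not of $F$). These are composites of abelian extensions of $K$ and $K'$, and they do \emph{not} exhaust the ray class extensions of $F$ that Rubin's Kolyvagin-derivative argument requires. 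The paper states this explicitly in the introduction: the method ``does not provide us with enough Euler system classes to apply Rubin's Euler system machinery and bound the associated Selmer group directly.'' So what you flag as ``bookkeeping'' in the $\Zp^{3}$-setting is in fact the central obstruction.

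The paper's fix, which your proposal is missing, is to give up on running Rubin over $F$ entirely. Instead, for every continuous character $\rho:\Gamma'\to\mathcal{O}^{\times}$ one twists the bottom class by $\rho$, applies Shapiro to descend to an Euler system for the two-dimensional representation $\Ind_{F}^{K}\mathcal{O}(\vartheta\rho)$ over $K$, and runs Rubin over the $\Zp^{2}$-extension $K_{\infty,\infty}/K$ --- where the available classes (indexed by split primes of $K$) \emph{are} sufficient. This bounds $\Char_{\Lambda_{\mathcal{O}}(H)}$ of the $\Gamma'$-coinvariants of (a twist of) the three-variable Selmer module, for every $\rho$. A separate commutative-algebra lemma in the style of Ochiai then shows that a divisibility holding after specialisation at every character of $\Gamma'$ forces the divisibility in $\Lambda_{\mathcal{O}}(G)$. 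A control theorem comparing $\Sel$ over $N_{\infty}=FK_{\infty,\infty}$ with $(\Sel\text{ over }F_{\infty})^{\Gamma'}$ is also needed to make the two sides match. The explicit reciprocity law enters exactly as you say, but at the two-variable level for each $\rho$, and the term $I(\chi)$ appears because the regulator map lands in $I(\chi)^{-1}\Lambda$ rather than in $\Lambda$.
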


We remark that our results allow us bound a Selmer group associated to the Galois representation $\mathcal{O}(\theta)$. When the order of $\theta$ has order coprime to $p$, this Selmer group is naturally isomorphic to $X^{\Sigma'}_{\Sigma}(F'_{\infty})^{\theta}$. Our method also extends to the case when $p$ divides the order of $\theta$, but the result is stated as above for ease of exposition.

We now give a brief outline of the strategy of the proof. Given a pair of modular forms there exists a four dimensional Galois representation of $G_{\Q}$ associated to their Rankin--Selberg convolution. In \cite{LLZ1} Lei, Loeffler and Zerbes have constructed an Euler system for this representation with classes over abelian extensions of $\Q$. If at least one of the forms has $CM$  by an imaginary quadratic field $K$, then this Galois representation is induced from a two-dimensional representation over $G_{K}$.  In this case Lei, Loeffler and Zerbes have also constructed an Euler system, with classes defined over abelian extensions of $K$, for this two-dimensional representation \cite{LLZ2}.    
Our assumption on $\vartheta$ implies that the Galois representation $\mathcal{O}(\vartheta)$, when induced to a $G_{\Q}$-representation, can be interpreted as the Galois representation associated to the Rankin--Selberg convolution of two CM modular forms. Following the method of \cite{LLZ2}, we can construct Euler system classes for $\mathcal{O}(\vartheta)$ over abelian extensions of $F$ which are obtained from composites of abelian extensions of $K$ and $K'$. This method does not provide us with enough Euler system classes to apply Rubin's Euler system machinery and bound the associated Selmer group directly, but crucially we can construct classes over the $\Z_{p}^{3}$-extension of $F$. The strategy is then to twist the Euler system by arbitrary continuous characters $\rho$ of $G$ and apply Shapiro's lemma to obtain an Euler system over $K$ for $\Ind^{K}_{F}\mathcal{O}(\vartheta\rho)$ for each character $\rho$. By applying the results of Rubin to each of these Euler systems, we obtain the above result. A crucial ingredient of the proof is the explicit reciprocity law \cite[Theorem B]{KLZ2} which relates the Euler system to a three variable $p$-adic $L$-function, and in particular proves the Euler system is not trivial. 

We make some brief remarks about the conditions in $Hyp(A)$.  We note that by a twisting argument we can relax the hypothesis slightly by only asking that $Hyp(A)$ is satisfied by some twist of $\vartheta$ by a finite order character of $G$.  The first condition is necessary to relate the Galois representation to one arising from a Rankin--Selberg convolution. The second condition relates to the hypotheses $Hyp(K_{\infty},T)$ of \cite[II.3]{R}. The final two conditions are required in the construction of the Euler system, see Theorem \ref{ind}.  

 \subsection*{Acknowledgement}
 The author would like to thank David Loeffler and Sarah Zerbes for many insightful conversations. The author was supported by the Heilbronn Institute for Mathematical Research.

\section{Notation and Preliminaries}
We briefly introduce the notation and conventions used throughout this paper. 
\subsection{Euler systems and twisting}\label{esandtw}
Let $p\geq 5$ be prime and $L/\Qp$ a finite extension with ring of integers $\mathcal{O}$. Suppose that $K$ is a number field and $T$ is a finite rank free $\mathcal{O}$-module with a continuous action of $G_{K}=\Gal(\overline{\Q}/K)$ unramified outside a finite set of places. We write $T^{*}$ for the dual representation $T^{*}= \Hom_{\mathcal{O}}(T,\mathcal{O})$ and for $n\in \Z$, write $T(n)$ for the representation $T$ twisted by the $n$-th power of the cyclotomic character. We let $S$ denote a finite set of places of $K$ containing all the infinite places, all the places dividing $p$ and all the places at which $T$ is ramified. We write $K^{S}$ for the maximal algebraic extension of $K$ which is unramified outside $S$, and for $F/K$ contained in $K^S$, we write $H^{1}_{S}(F,T)$ for $H^{1}(\Gal(K^{S}/F),T)$.

For an integral ideal $\mathfrak{r}$ of $K$, we let $K(\mathfrak{r})$ denote the maximal abelian $p$-extension of $K$ with conductor dividing $\mathfrak{r}$. We suppose that $K_{\infty}/K$ is a $\Z_{p}^{d}$-extension for some integer $d\geq 1$, and we let $K_{\infty}(\mathfrak{r})$ denote the composite $K_{\infty}K(\mathfrak{r})$ and put $G(\mathfrak{r})=\Gal(K_{\infty}(\mathfrak{r})/K)$. We write $\Lambda_{\mathcal{O}}(G(\mathfrak{r}))=\mathcal{O}[[G(\mathfrak{r})]]$ for the completed group ring of $G(\mathfrak{r})$, and $\Lambda^{\#}_{\mathcal{O}}(G(\mathfrak{r}))$  for the free rank one $\Lambda_{\mathcal{O}}(G(\mathfrak{r}))$-module endowed with the action of $G_{K}$ by the inverse of the canonical character. We also let $S(\mathfrak{r})$ denote a finite set of primes containing $S$ and the primes dividing $\mathfrak{r}$. Then by Shapiro's lemma we have a canonical isomorphism of $\Lambda_{\mathcal{O}}(G(\mathfrak{r}))$-modules
\begin{equation}\label{ShapiroIwCoh}
 H^{1}_{S(\mathfrak{r})}(K,\Lambda_{\mathcal{O}}^{\#}(G(\mathfrak{r}))\otimes_{\mathcal{O}} T ) \cong \varprojlim_{F} H_{S(\mathfrak{r})}^{1}(F,T),
\end{equation}
where $F$ runs over the finite extensions of $K$ contained in $K_{\infty}(\mathfrak{r})$, and the transition maps are given by corestriction.

An Euler system for $(T,K_{\infty})$ consists of cohomology classes 
$$
c_{\infty}(\mathfrak{r}) \in H^{1}_{S(\mathfrak{r})}(K,\Lambda_{\mathcal{O}}^{\#}(G(\mathfrak{r}))\otimes_{\Zp} T ),
$$
for all integral ideals $\mathfrak{r}$ coprime to $S$, satisfying the relations
$$
\pi^{\mathfrak{r}'}_{\mathfrak{r}} (c_{\infty}(\mathfrak{r'})) = \prod_{\mathfrak{l}} P_{\mathfrak{l}} (\Frob_{\mathfrak{l}}^{-1})\cdot c_{\infty}(\mathfrak{r'})
$$
where the product runs over all primes $\mathfrak{l}$ ramified in $K(\mathfrak{r}')$ but not in $K(\mathfrak{r})$.
Here $\pi^{\mathfrak{r}'}_{\mathfrak{r}}$ denotes the restriction map $\Lambda_{\mathcal{O}}(G(\mathfrak{r'}))\rightarrow \Lambda_{\mathcal{O}}(G(\mathfrak{r}))$ and
$$
P_{\mathfrak{l}}(X) = \det\left(1-X\cdot\Frob_{\mathfrak{l}}^{-1} \mid T^{*}(1)\right)\in \mathcal{O}[X].
$$
 Here and throughout the paper $\Frob_{\mathfrak{l}}$ denotes the arithmetic Frobenius at $\mathfrak{l}$. We note that to apply the results of \cite[II.3]{R}, we can restrict to ideals $\mathfrak{r}$ which are products of split primes of squarefree norm.

We let $G=\Gal(K_{\infty}/K)$, and for a continuous character $\rho : G \rightarrow \mathcal{O}^{\times}$  and $\mathfrak{r}$ as above we define
$$
\Tw_{\rho}: \Lambda_{\mathcal{O}}(G(\mathfrak{r})) \rightarrow  \Lambda_{\mathcal{O}}(G(\mathfrak{r}))
$$
to be the continuous $\mathcal{O}$-linear map which extends $g\mapsto \rho(g)g$ for all $g\in G(\mathfrak{r})$. Then we have a canonical isomorphism of $\Lambda_{\mathcal{O}}(G(\mathfrak{r}))$-modules 
\begin{equation*}
\alpha_{\rho} : H^{1}_{S(\mathfrak{r})}(F, \Lambda_{\mathcal{O}}^{\#}(G(\mathfrak{r}))\otimes_{\mathcal{O}}T ) \otimes_{\mathcal{O}}\mathcal{O}(\rho) \cong  H^{1}_{S(\mathfrak{r})}(F, \Lambda_{\mathcal{O}}^{\#}(G(\mathfrak{r}))(\rho) \otimes T)
\end{equation*}
described on cocycles by the formula
$$
\alpha_{\rho}(z\otimes w)(g) = \Tw_{\rho^{-1}}(z(g))\otimes w. 
$$ 
Note that if $\lambda\in \Lambda_{\mathcal{O}}(G(\mathfrak{r}))$, then we have 
\begin{equation}\label{twista}
\alpha_{\rho}(\lambda z\otimes w) =  \Tw_{\rho^{-1}}(\lambda)\alpha_{\rho}(z\otimes w).
\end{equation}
Given a cohomology class  $c\in H^{1}_{S(\mathfrak{r})}(F, \Lambda_{\mathcal{O}}^{\#}(G(\mathfrak{r}))\otimes_{\mathcal{O}}T )$, we write $c^{(\rho)}$ for the class defined by $\alpha_{\rho}(c\otimes 1)$. 
\subsection{Algebraic Hecke characters}
Suppose that $K$ is a totally imaginary number field and $\mathfrak{f}$ is an integral ideal of $K$. Let $\mathrm{Emb}(K,\overline{\Q})$ denote the set of field embeddings of $K$ into $\overline{\Q}$ and let $I(\mathfrak{f})$ denote the group of fractional ideals coprime to $\mathfrak{f}$. Given an element $\omega \in \mathbb{Z}[\mathrm{Emb}(K,\overline{\Q})]$, we say that a homomorphism $\psi : I(\mathfrak{f}) \rightarrow \overline{\Q}^{\times}$ is an algebraic Hecke character of $K$ of modulus $\mathfrak{f}$ and infinity type $\omega$ if for all $\alpha \in K^{\times}$ with $\alpha \equiv^{\times} 1$ modulo $\mathfrak{f}$, we have 
$$
\psi((\alpha)) = \alpha ^ \omega.
$$
The largest such ideal with this property is called the conductor of $\psi$. When $K$ is an imaginary quadratic field and $\mathfrak{a}$ an ideal of $K$, we let $\overline{\mathfrak{a}}$ denote the ideal conjugate to $\mathfrak{a}$ under the non-trivial element of $\Gal(K/\Q)$. We use the notation $\psi^{c}$ to denote the algebraic Hecke character of $K$ defined by $\psi^{c}(\mathfrak{a}):=\psi(\overline{\mathfrak{a}})$ for all $\mathfrak{a}\in I(\overline{\mathfrak{f}})$.

Given fixed embeddings $\iota_{\infty}: \overline{\Q} \rightarrow \C$ and $\iota_{p}: \overline{\Q} \rightarrow \C_{p}$, we define $\psi_{\infty}:= \iota_{\infty} \circ \psi$ and  $\psi_{p}:= \iota_{p} \circ \psi$. We also consider $\psi_{p}$ as a Galois character $\psi_{p}:G_{K}\rightarrow \overline{\Q}_{p}^{\times}$, which is the unique continuous character such that $\psi_{p}(\Frob_{\mathfrak{l}})= \psi_{p}(\mathfrak{l})$, for all primes $\mathfrak{l}$ of $K$ coprime to $\mathfrak{f}$.

\subsection{Modular curves and cohomology}
For an integer $N\geq 5$, $Y_{1}(N)$ denotes the $\Q$-scheme whose points over a $\Q$-scheme $S$ correspond to isomorphism classes $(E,P)$, where $E/S$ is an elliptic curve, and $P\in E(S)$ has order $N$. For integers $N\geq 5$ and $d\geq 1$ we define the morphisms 
$\pr_{1}:Y_{1}(Nd)\rightarrow Y_{1}(N)$ by $(E,P)\mapsto  (E,[d]P)$ and   $\pr_{2}:Y_{1}(Nd)\rightarrow Y_{1}(N)$ by $(E,P)\mapsto  (E/<NP>, P+<NP>)$.

The dual Hecke operators $T'(n)$ for $n\geq 1$ on $Y_{1}(N)$ are as defined in \cite[Section 2.9]{Kato}, and we denote by $\mathbb{T}(N)$ the $\Zp$-subalgebra of $\End(H^{1}_{\et}(\overline{Y_{1}(N)},\Zp))$ generated by $T'(n)$ for all $n\geq 1$. If $\ell$ is a prime dividing $N$, then we write $U'(\ell)$ for $T'(\ell)$. When $p$ divides $N$, we let $e'_{\ord}:=\varinjlim_{n} U'(p)^{n!}$ denote Hida's anti-ordinary projector, and we put $\mathbb{T}_{\ord}(N):= e'_{\ord} \mathbb{T}(N)$. 

We define
$$
GES_{p}(N,\mathbb{Z}_{p}) := \varprojlim_{r\geq 0} H^{1}_{\et}(\overline{Y_{1}(Np^{r})},\Zp),
$$
with the limit being taken with respect to $\pr_{1,*}$. As a $G_{\Q}$-module, it is unramified outside the primes dividing $Np$. The actions of the adjoint Hecke operators at each level are compatible and thus we can define an action of the adjoint Hecke operators on $GES_{p}(N,\mathbb{Z}_{p})$. We define
\begin{align*}
H^{1}_{\ord}(Np^{\infty}) &= e'_{\ord} GES_{p}(N,\mathbb{Z}_{p})(1)\\
&\cong \varprojlim_{r\geq 1} e'_{\ord}  H^{1}_{\et}(\overline{Y_{1}(Np^{r})},\Zp(1)). 
\end{align*}

Let $Z_{N}=\varprojlim_{r\geq 1} \left(\Z/Np^{r}\Z \right)^{\times}$ and $\Zp[[Z_{N}]]:= \varprojlim_{r\geq 1} \Zp[\left(\Z/Np^{r}\right)^{\times}]$. Then we have an action of $\Zp[[Z_{N}]]$ on $H^{1}_{\ord}(Np^{\infty})$ with $u\in Z_{N}$ acting via $\left<u^{-1} \text{ mod }Np^{r}\right>$ at level $Np^{r}$.  
We let $\mathbb{T}(Np^{\infty}) \subset \End(H^{1}_{\ord}(Np^{\infty}))$ denote the algebra generated over $\Zp[[Z_{N}]]$ by $T'(n)$ for all $n\geq 1$. There is a control theorem due to Ohta \cite{Ohta99,Ohta00} which allows us to describe the $\mathbb{T}(Np^{\infty})[G_{\Q}]$-structure of $H^{1}_{\ord}(Np^{\infty})$ in terms of the $\mathbb{T}_{\ord}(Np^{r})[G_{\Q}]$-structure of $e'_{\ord}  H^{1}_{\et}(\overline{Y_{1}(Np^{r})},\Zp(1))$ for all $r\geq 1$. We refer to \cite[Proposition 7.2.1]{KLZ2} for a statement of this theorem. 

Let $f=\sum_{n\geq 1} a_{n}(f)q^{n}\in S_{2}(\Gamma_{1}(N),\C)$ be a normalised weight two cuspform which is an eigenform for all the Hecke operators $T(\ell)$ and $U(\ell)$ with $\ell$ prime and $\ell\nmid N$ and $\ell \mid N$ respectively. If $L$ is a a number field containing the Fourier coefficients of $f$, and $\mathfrak{P}$ is a prime of $L$ lying above $p$, then we define $V_{L_\mathfrak{P}}(f)^{*}$ to be the maximal quotient of $ H^{1}_{\et}(\overline{Y_{1}(N)},L_{\mathfrak{P}}(1))$ on which $T'(\ell)$  (or $U'(\ell)$) act as multiplication by $a_{\ell}(f)$. It is an irreducible two-dimensional representation such that for all primes $\ell \nmid Np$ it is unramified and 
\begin{equation}\label{charpolV}
\mathrm{det}_{L_{\mathfrak{P}}} ( 1-X\cdot \Frob_{\ell} \mid V_{L_\mathfrak{P}}(f)^{*}) = 1-a_{\ell}(f)X + \varepsilon(\ell) \ell X^{2},
\end{equation}
where $\varepsilon : \left(\Z/N\Z\right)^{\times} \rightarrow \C^{\times}$ is the character of $f$.

\section{Construction of the Euler system}\label{seccon}
We now give an outline of the construction of an Euler system associated to a pair of modular forms with CM by distinct imaginary quadratic fields. 
\subsubsection{Asymmetric zeta elements}

To construct the Euler system classes we begin with the asymmetric zeta elements introduced by Lei--Loeffler--Zerbes \cite{LLZ1,LLZ2}. We do not recall the definition here, but instead list below some basic properties we will need.  
For integers $m\geq 1$ and $N',N\geq 5$ the motivic cohomology classes 
$$
{}_{c}\Xi(m,N,N') \in CH^{2}(Y_{1}(N)\times Y_{1}(N')\times \Spec(\mathbb{Q}(\mu_{m})),1)
$$
are those denoted by ${}_{c}\Xi(m,N,N',1)$ \textit{op. cit.} 
The following describes how the asymmetric zeta classes behave with respect to the degeneracy maps $\pr_{1}$ and $\pr_{2}$. For a prime $\ell$, we have the following relations, which can be found in \cite[Section 2.2]{LLZ2},
\begin{equation}\label{1stdeg}
(\pr_{1}\times 1)_{*}({}_{c}\Xi(m,\ell N,N')) = \begin{cases}  {}_{c}\Xi(m,N,N') &\text { if }\ell \mid mNN' \\
\left( 1-(\left<\ell^{-1}\right>,\left<\ell^{-1}\right>)\sigma_{\ell}^{-2} \right){}_{c}\Xi(m,N, N') &\text{ else.}
\end{cases}
\end{equation}
If $\ell\nmid mNN'$, then 
\begin{equation}\label{2nddeg}
(\pr_{2}\times 1)_{*} {}_{c}\Xi(m,\ell N, N') = \left[ (1,T'(\ell))\sigma_{\ell}^{-1} - (T'(\ell),\left<\ell^{-1} \right>)\sigma_{\ell}^{-2}   \right]   {}_{c}\Xi(m,N,N').
\end{equation}
We also recall the dependence of the zeta elements on the integer $c$, which can be found in \cite[Proposition 2.7.5]{LLZ1}. For integers $c,d$ coprime to $6mNN'p$ we have 
\begin{equation}\label{cdsymm}
\left[ d^2 - \left(\left< d\right>,\left< d\right> \right) \sigma_{d}^{2}  \right] {}_{c}\Xi(m,N,N') = \left[ c^2 - \left(\left< c\right>,\left< c\right> \right) \sigma_{c}^{2}  \right]{}_{d}\Xi(m,N,N').
\end{equation}
Here $\sigma_{c}$ denotes the element of $\Gal(\Q(\mu_{m})/\Q)$ given by $\zeta\mapsto \zeta^{c}$ for $\zeta\in \mu_{m}$, and similarly for $\sigma_{d}$.

\subsubsection{Beilinson--Flach classes}
By applying the \'{e}tale regulator map 
$$
r_{\et} :  \CH^{2}(Y_{1}(N)\times Y_{1}(N')\times \Spec(\mathbb{Q}(\mu_{m})),1) \rightarrow H^{1}(\Q(\mu_{m}), H^{2}_{\et}(\overline{Y_{1}(N)\times Y_{1}(N')}, \Zp(2))
$$
and the K\"{u}nneth formula 
$$
H^{2}_{\et}(\overline{Y_{1}(N)\times Y_{1}(N')},\Zp)\cong   H^{1}_{\et}(\overline{Y_{1}(N)},\Zp) \otimes  H^{1}_{\et}(\overline{Y_{1}(N')},\Zp)
$$
one obtains the classes 
$$
{}_{c}z(m,N,N') \in  H^{1}(\Q(\mu_{m}), H^{1}_{\et}(\overline{Y_{1}(N)},\Zp(1))\otimes H^{1}_{\et}(\overline{Y_{1}(N')},\Zp(1)).
$$
In the following we fix $S$ to be a finite set of primes, containing those dividing $mNN'p$. The classes ${}_{c}z(m,N,N')$ are then unramified outside $S$ \cite[Proposition 6.5.4]{LLZ1}.  
By the relation \eqref{1stdeg} and the compatibility of the \'{e}tale regulator map with pushforward maps, the elements ${}_{c}z(m,Np^{r},N'p^{r})$ are compatible for $r\geq 1$ under the pushforward $(\pr_{1}\times \pr_{1})_{*}$. We now define the asymmetric Rankin--Iwasawa classes
$$
{}_{c}\mathcal{RI}(m,N,N') \in H^{1}_{S}(\Q(\mu_{m}),GES_{p}(N,\Zp)(1) \widehat{\otimes} GES_{p}(N',\Zp)(1)),
$$
such that 
$$
{}_{c}\mathcal{RI}(m,N,N')  = \varprojlim_{r\geq 1}{}_{c}z(m,Np^{r} , N'p^{r}). 
$$

\begin{theorem}
Suppose that $\ell$ is a prime such that $\ell \nmid mNN'pc$. Then we have 
$$
(\pr_{1}\otimes 1)_{*}{}_{c}\mathcal{RI}(m, \ell N,N') = [1-(\left<\ell^{-1}\right> \otimes \left<\ell^{-1}\right>)\sigma_{\ell}^{-2}] {}_{c}\mathcal{RI}(m,N,N')
$$
and 
$$
( \pr_{2}\otimes 1)_{*}{}_{c}\mathcal{RI}(m,\ell N, N' ) = [(1\otimes T'(\ell))\sigma_{\ell}^{-1} - (T'(\ell)\otimes\left<\ell^{-1}\right>)\sigma_{\ell}^{-2})] {}_{c}\mathcal{RI}(m,N,N')
$$
\end{theorem}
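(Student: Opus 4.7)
The strategy is to reduce everything to finite level, invoke the motivic identities \eqref{1stdeg} and \eqref{2nddeg} there, and then pass to the inverse limit defining ${}_{c}\mathcal{RI}(m,N,N')$. By construction ${}_{c}\mathcal{RI}(m,\ell N, N') = \varprojlim_{r}\, {}_{c}z(m,\ell Np^{r},N'p^{r})$, with transition maps given by $(\pr_{1}\otimes \pr_{1})_{*}$; and $(\pr_{1}\otimes 1)_{*}$ on the Rankin--Iwasawa class is computed level by level via the degeneracy $\pr_{1}\colon Y_{1}(\ell Np^{r}) \to Y_{1}(Np^{r})$ on the first factor, which is compatible with the transition maps in $r$ because multiplication by $\ell$ and by $p$ on the level structure commute.

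Fix $r\geq 1$. Since $\ell\nmid mNN'p$, we have $\ell \nmid m\cdot Np^{r}\cdot N'p^{r}$, so the second case of \eqref{1stdeg}, applied with $N$ replaced by $Np^{r}$ and $N'$ by $N'p^{r}$, gives
$$
(\pr_{1}\times 1)_{*}\,{}_{c}\Xi(m,\ell Np^{r}, N'p^{r}) = \bigl[1-(\langle \ell^{-1}\rangle, \langle \ell^{-1}\rangle)\sigma_{\ell}^{-2}\bigr]\,{}_{c}\Xi(m,Np^{r}, N'p^{r})
$$
in motivic cohomology. Applying the \'etale regulator $r_{\et}$, which is a natural transformation and therefore commutes with pushforward along $\pr_{1}\times 1$, with the action of diamond operators (being induced by morphisms of modular curves), and with the Galois action (in particular with $\sigma_{\ell}$), and then invoking the K\"unneth decomposition, one obtains the analogous identity for ${}_{c}z(m,\ell Np^{r}, N'p^{r})$. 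Passing to the inverse limit over $r$ then yields the first assertion. The second assertion is proved identically, starting from \eqref{2nddeg} in place of \eqref{1stdeg}, since $\ell\nmid m\cdot Np^{r}\cdot N'p^{r}$ guarantees the hypothesis there as well.

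The main obstacle is the bookkeeping required to verify that the operators appearing on the right --- the diamond operators $\langle \ell^{-1}\rangle$ on each factor, the Hecke operator $T'(\ell)$ on $Y_{1}(N'p^{r})$ in the second identity, and the Galois element $\sigma_{\ell}\in \Gal(\Q(\mu_{m})/\Q)$ --- are defined compatibly across the tower and commute with the pro-structure maps $(\pr_{1}\otimes \pr_{1})_{*}$. For the diamond operators and $T'(\ell)$ this is because they are induced from morphisms that involve only the prime $\ell$, while the transition maps involve only $p$; the two families of morphisms therefore commute on the level of elliptic curves with $\Gamma_{1}(\cdot)$-structure, and the induced pushforwards on \'etale cohomology commute as well. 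For $\sigma_{\ell}$ this is automatic since it acts on the coefficient ring of the Galois cohomology and passes trivially through inverse limits of $G_{\Q(\mu_{m})}$-modules. This ensures the identity obtained at each finite level is preserved in the limit, giving the theorem.
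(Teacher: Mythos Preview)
Your proof is correct and follows essentially the same route as the paper: the paper's one-line proof invokes \cite[Theorem 2.2.2]{LLZ2} (which packages the identities \eqref{1stdeg} and \eqref{2nddeg}), the compatibility of the \'etale regulator with correspondences, and the compatibility of the adjoint Hecke operators under the control theorem --- exactly the ingredients you spell out in detail. One small imprecision: in the second identity the operator $T'(\ell)$ appears on \emph{both} factors (in the terms $(1\otimes T'(\ell))$ and $(T'(\ell)\otimes\langle\ell^{-1}\rangle)$), not only on $Y_{1}(N'p^{r})$, but your compatibility argument applies equally to $T'(\ell)$ acting on the first factor.
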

\begin{proof} This follows from \cite[Theorem 2.2.2]{LLZ2} and the compatibility of adjoint Hecke operators under the control theorem, and the compatibility of the \'{e}tale regulator map with correspondences. 
\end{proof}

By \cite[Theorem 3.3.2]{LLZ1} we have the norm relation
$$
\cores_{\Q(\mu_{mp^{r}})}^{\Q(\mu_{mp^{r+1}})} {}_{c}\mathcal{RI}(mp^{r+1},N,N') =
\begin{cases}
(U'(p)\times U'(p) - \sigma_{p})  {}_{c}\mathcal{RI}(m,N,N')  &r=0\\
(U'(p)\times U'(p)) {}_{c}\mathcal{RI}(mp^{r},N,N') & r\geq 1
\end{cases}
$$

We may now define the asymmetric Beilinson--Flach classes. Recall that $H^{1}_{\ord}(Np^{\infty})$ is the maximal quotient of $GES_{p}(N,\Zp)(1)$ for which the Hecke operator $U'(p)$ is invertible. Thus we define, for $p\nmid mNN'$,  
$$
{}_{c}\mathcal{BF}(m,N,N') \in H^{1}_{S}(\Q,H^{1}_{\ord}(Np^{\infty}) \widehat{\otimes} H^{1}_{\ord}(N'p^{\infty}) \widehat{\otimes} \Lambda^{\#}(\Gal(\Q(\mu_{mp^{\infty}})/\Q)))
$$
as the inverse limit of the elements 
$$
 (U'(p),U'(p))^{-r} (e'_{\ord},e'_{\ord}) {}_{c}\mathcal{RI}(mp^{r},N,N') \in H^{1}_{S}(\Q(\mu_{mp^{r}}),H^{1}_{\ord}(Np^{\infty}) \widehat{\otimes} H^{1}_{\ord}(N'p^{\infty}) )
$$
with respect to the corestriction maps for all $r\geq 1$. 
Recall that $\omega : G_{\Q}\rightarrow \Z_{p}^{\times}$ denotes the cyclotomic character modulo $p$. For each residue class $i$ modulo $p-1$, we define
$$
{}_{c}\mathcal{BF}(N,N',\omega^{i}) \in H^{1}_{S}(\Q,H^{1}_{\ord}(Np^{\infty}) \widehat{\otimes} H^{1}_{\ord}(N'p^{\infty})\widehat{\otimes} \Lambda_{\Zp}^{\#}(\Gamma_{\cyc})(\omega^{i}))
$$
to be the corestriction of the twisted cohomology class 
$
{}_{c}\mathcal{BF}(1,N,N')^{(\omega^{i})}. 
$

\subsection{Families of CM modular forms}\label{FCMMFs}
We fix an algebraic closure $\overline{\Q}$ of $\Q$ and embeddings $\iota_{\infty}: \overline{\Q} \hookrightarrow \C$, $\iota_{p}: \overline{\Q} \hookrightarrow \C_{p}$, and we let $K$ be an imaginary quadratic field of discriminant $-D$. For an integral ideal $\mathfrak{n}$ of $K$, we let $K(\mathfrak{n})$ denote the maximal abelian $p$-extension of $K$ with conductor dividing $\mathfrak{n}$, and we let $H(\mathfrak{n})$ denote $\Gal(K(\mathfrak{n})/K)$. We define $\mathfrak{p}$ to be the prime of $K$ lying above $p$ which corresponds to $\iota_{p}$.

Suppose that $\psi$ is an algebraic Hecke character of $K$ with infinity type $(1,0)$ and modulus $\mathfrak{n}$. Then the $q$-expansion 
\begin{equation}\label{thetapsi}
\theta_{\psi}(q) = \sum_{(\mathfrak{a},\mathfrak{f})=1}\psi_{\infty}(\mathfrak{a})q^{N_{K/\Q}(\mathfrak{a})} \in \C[[q]]
\end{equation}
is the $q$-expansion of a normalised cuspidal eigenform of weight $2$, level $N:=N_{K/\Q}(\mathfrak{n})D$ and character $\varepsilon_{\psi}=\chi \varepsilon_{K}$. Here $\chi$ is the Dirichlet character defined by $\chi(a)=\psi((a))/a$, and $\varepsilon_{K}$ is the quadratic character associated to $K/\Q$. It is new precisely when $\mathfrak{n}$ is the conductor of $\psi$.  For details we refer the reader to  \cite[Theorem 4.8.2 ]{Miy}.

We now assume that $(\mathfrak{f}, p)=1$ is an integral ideal  of $K$  and  suppose that $\psi$ is an algebraic Hecke character of infinity type $(1,0)$ and conductor $\mathfrak{fp}^{r}$ for some $r\geq 0$. We then suppose that $(\mathfrak{n},p)=1$ is an integral ideal divisible by $\mathfrak{f}$ and put $N=N_{K/\Q}(\mathfrak{n})D$. We let $L$ be a finite extension of $\Qp$ with ring of integers $\mathcal{O}$, which is assumed to be large enough to contain the values of $\psi_{p}$. 
We denote by $\mathfrak{m}$ the maximal ideal of $\mathcal{O}$.

We have a ring homomorphism
\begin{align}\label{ringhom}
\phi_{\mathfrak{n}} : \mathbb{T}(Np^{\infty}) &\rightarrow \Lambda_{\mathcal{O}}(H(\mathfrak{np}^{\infty})) \\
 T'(\ell) &\mapsto \sum_{\mathfrak{l}} \psi(\mathfrak{l})\sigma_{\mathfrak{l}}^{-1}\nonumber \\
\left<d^{-1}\right> &\mapsto \varepsilon_{\psi}(d)\sigma_{(d)}^{-1} \nonumber.
\end{align}
where $\ell$ is prime and the sum runs over the ideals $\mathfrak{l}\nmid \mathfrak{np}$ of norm $\ell$. Here for an ideal $\mathfrak{a}$ of $K$ prime to $\mathfrak{np}$, $\sigma_{\mathfrak{a}}\in H(\mathfrak{np}^{\infty})$ is the image of $\mathfrak{a}$ under the Artin map, with the arithmetic normalisation.  To see that this defines a ring homomorphism, we note that for each finite order character $\chi : H(\mathfrak{np}^{\infty}) \rightarrow \overline{\Q}^{\times}$, $\chi_{p}\circ \phi_{\mathfrak{n}}$ coincides with the ring homomorphism on $\mathbb{T}_{\ord}(Np^{r})$, for $r$ sufficiently large, attached to the cuspform $\theta_{\chi^{-1}\psi}$.

\begin{definition} We define
$$
H^{1}_{\ord}(\psi,\mathfrak{np}^{\infty}):= H^{1}_{\ord}(Np^{\infty})\otimes_{\mathbb{T}(Np^{\infty}),\phi_{\mathfrak{n}}}\Lambda_{\mathcal{O}}(H(\mathfrak{np}^{\infty})).
$$

\end{definition}

In the following, let $S$ be a a finite set of primes containing those dividing $Np$.

\begin{definition}
For $\mathfrak{n}$ as above we define 
$$
{}_{c}\mathcal{BF}(\mathfrak{n},\psi,N',\omega^{i})\in H^{1}_{S}(\mathbb{Q},  H^{1}_{\ord}(\psi,\mathfrak{np}^{\infty}) \widehat{\otimes}  H^{1}_{\ord}(N'p^{\infty}) \widehat{\otimes} \Lambda^{\#}(\Gamma_{\cyc})(\omega^{i}))
$$
to be the image of ${}_{c}\mathcal{BF}(N,N',\omega^{i})$ under $\phi_{\mathfrak{n}}$, where $N = \N_{K/\Q}(\mathfrak{n}) D$. 
\end{definition}

\begin{theorem}\label{ind} Suppose that $\psi_{p\mid D_{\mathfrak{p}}} \not\equiv \psi_{p\mid D_{\mathfrak{p}}}^{c} \text{ mod }\mathfrak{m}$, where $D_{\mathfrak{p}}$ is a decomposition group at $\mathfrak{p}$. Then 
$H^{1}_{\ord}(\psi,\mathfrak{np}^{\infty})$ is free of rank $2$ over $\Lambda_{\mathcal{O}}(H(\mathfrak{np}^{\infty}))$. Moreover there exists an isomorphism between $H^{1}_{\ord}(\psi,\mathfrak{np}^{\infty})$ and  $\Ind_{K}^{\Q}\Lambda^{\#}_{\mathcal{O}}(H(\mathfrak{np}^{\infty}))(\psi_{p})$ as  $\Gal(\Q^{S}/\Q)$-representations over $\Lambda_{\mathcal{O}}(H(\mathfrak{np}^{\infty}))$. 
\end{theorem}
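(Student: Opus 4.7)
The plan is to combine Ohta's control theorem with the classical description of the Galois representation attached to a CM modular form, and then use residual irreducibility to promote the pointwise identification at arithmetic specialisations to freeness over the Iwasawa algebra $\Lambda_{\mathcal{O}}(H(\mathfrak{np}^{\infty}))$.

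First I would unwind the definition of $H^{1}_{\ord}(\psi,\mathfrak{np}^{\infty})$ via Ohta's control theorem \cite[Proposition 7.2.1]{KLZ2}. For any finite order character $\chi : H(\mathfrak{np}^{\infty}) \to \overline{\Q}^{\times}$, the composition $\chi_{p}\circ \phi_{\mathfrak{n}}$ agrees, for $r$ sufficiently large, with the ring homomorphism on $\mathbb{T}_{\ord}(Np^{r})$ attached to the CM eigenform $\theta_{\chi^{-1}\psi}$; this is exactly the observation already used to show that $\phi_{\mathfrak{n}}$ is well defined. Hence the specialisation of $H^{1}_{\ord}(\psi,\mathfrak{np}^{\infty})$ at $\chi_{p}$ recovers the two-dimensional Galois representation $V_{L_{\mathfrak{P}}}(\theta_{\chi^{-1}\psi})^{*}$ attached to that classical cuspform.

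Next I would invoke the classical identification $V_{L_{\mathfrak{P}}}(\theta_{\varphi})^{*} \cong \Ind_{K}^{\Q} L_{\mathfrak{P}}(\varphi_{p})$ for any Hecke character $\varphi$ of infinity type $(1,0)$, proved by comparing the Frobenius polynomial \eqref{charpolV} at primes $\ell$ split in $K$ with the trace of the induced representation and applying Chebotarev density. Applying this with $\varphi=\psi$ (and $\chi$ trivial) identifies the residual module $H^{1}_{\ord}(\psi,\mathfrak{np}^{\infty}) \otimes_{\Lambda_{\mathcal{O}}(H(\mathfrak{np}^{\infty}))} \mathcal{O}/\mathfrak{m}$ with $\Ind_{K}^{\Q}(\mathcal{O}/\mathfrak{m})(\bar{\psi}_{p})$. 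The hypothesis $\psi_{p\mid D_{\mathfrak{p}}} \not\equiv \psi^{c}_{p\mid D_{\mathfrak{p}}} \pmod{\mathfrak{m}}$ then forces this induced representation to be absolutely irreducible: any $G_{\Q}$-stable line would restrict to a $G_{K}$-stable line inside $\bar{\psi}_{p} \oplus \bar{\psi}^{c}_{p}$, forcing equality of the two characters on a decomposition group at $\mathfrak{p}$, contrary to hypothesis.

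With residual absolute irreducibility in hand, freeness follows by the standard Hida-theoretic argument: Nakayama produces a surjection onto $H^{1}_{\ord}(\psi,\mathfrak{np}^{\infty})$ from a free $\Lambda_{\mathcal{O}}(H(\mathfrak{np}^{\infty}))$-module of rank two, and its kernel must vanish since otherwise some arithmetic specialisation would yield a non-trivial $G_{\Q}$-stable sub or quotient of a residually absolutely irreducible two-dimensional representation. Having established freeness, the isomorphism with $\Ind_{K}^{\Q}\Lambda^{\#}_{\mathcal{O}}(H(\mathfrak{np}^{\infty}))(\psi_{p})$ reduces to a Chebotarev computation: by the definition of $\phi_{\mathfrak{n}}$, both sides have the same trace of Frobenius at every rational prime $\ell$ split in $K$ and coprime to $\mathfrak{n}p$, hence they are isomorphic as $\Gal(\Q^{S}/\Q)$-representations over $\Lambda_{\mathcal{O}}(H(\mathfrak{np}^{\infty}))$.

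The main obstacle is precisely the freeness assertion. The pointwise identification at individual arithmetic characters is classical CM theory; the work is in ruling out torsion in the kernel of the surjection from the free rank-two module so as to obtain a genuine $\Lambda_{\mathcal{O}}(H(\mathfrak{np}^{\infty}))$-free structure of the expected rank. This is what the non-congruence condition $\bar{\psi}_{p} \not\equiv \bar{\psi}_{p}^{c}$ on $D_{\mathfrak{p}}$ is designed to guarantee, via the residual irreducibility step above, and it is also where one must carefully invoke Ohta's control theorem uniformly across specialisations.
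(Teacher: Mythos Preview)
Your overall strategy matches the paper's: use the control theorem to identify arithmetic specialisations with classical CM Galois representations, use the hypothesis to get residual absolute irreducibility of $\Ind_K^{\Q}k(\bar\psi_p)$ (the paper phrases this as Mackey's criterion), and then compare traces. For the final isomorphism the paper makes explicit what you leave implicit: once both modules are known to be free of rank two with absolutely irreducible residual reduction, equality of traces forces an isomorphism by a lemma of Carayol and Serre (stated just before the proof). Your Chebotarev step should check traces at \emph{all} $\ell\nmid Np$, not just split ones; this is immediate from the formula for $\phi_{\mathfrak n}(T'(\ell))$.

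The genuine gap is your freeness argument. You write that after Nakayama gives a surjection $\Lambda^2\twoheadrightarrow H^1_{\ord}(\psi,\mathfrak{np}^\infty)$, the kernel must vanish ``since otherwise some arithmetic specialisation would yield a non-trivial $G_{\Q}$-stable sub or quotient''. But the free module $\Lambda^2$ carries no Galois action, so there is no Galois-theoretic obstruction to a nonzero kernel. What one actually needs is that every specialisation $H^1_{\ord}(\psi,\mathfrak{np}^\infty)\otimes_\Lambda\mathcal O[\eta]$ is free of rank \emph{exactly} two over $\mathcal O[\eta]$; this is a multiplicity-one statement, not a consequence of residual irreducibility alone via the mechanism you describe. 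The paper avoids this issue entirely: it localises $H^1_{\ord}(Np^\infty)$ at the maximal ideal $\boldsymbol m\subset\mathbb T(Np^\infty)$ containing $\ker\phi_{\mathfrak n}$ and quotes \cite[Proposition~4.1.1]{EPW}, which says that at a non-Eisenstein maximal ideal (equivalently, when the residual representation is irreducible) the localised ordinary cohomology is free of rank two over the localised Hecke algebra. Freeness of $H^1_{\ord}(\psi,\mathfrak{np}^\infty)$ over $\Lambda_{\mathcal O}(H(\mathfrak{np}^\infty))$ then follows by base change along $\phi_{\mathfrak n}$. Your instinct that residual irreducibility is the key input is correct, but the route from it to freeness goes through this Emerton--Pollack--Weston result (ultimately resting on work of Mazur--Wiles and others), not through the specialisation argument you sketch.
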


The following two results together with the control theorem allow us to deduce this theorem from the corresponding result at finite level. 

\begin{theorem}[Brauer--Nesbitt] Let $G$ be a finite group and let $F$ be a field. Suppose that $M_{1}$ and $M_{2}$ are finitely generated $F[G]$-modules with $\dim_{F}(M_{1})=\dim_{F}(M_{2})$, and $Tr_{M_{1}}(g) = Tr_{M_{2}}(g)$ for all $g\in G$. If $M_{1}$ is absolutely irreducible, then $M_{1}$ and $M_{2}$ are isomorphic as $F[G]$-modules. 
\end{theorem}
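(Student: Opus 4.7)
The plan is to prove this classical Brauer--Nesbitt statement by exploiting the absolute irreducibility of $M_{1}$ together with the structure of the semisimple quotient of $F[G]$. First, by Noether--Deuring descent, two $F[G]$-modules are isomorphic if and only if they become isomorphic after extending scalars to $\overline{F}$, so we may assume $F$ is algebraically closed, whereupon the hypothesis simply asserts that $M_{1}$ is simple.

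Next observe that for any $x\in F[G]$ the trace of $x$ on $M_{2}$ equals its trace on the semisimplification $M_{2}^{ss}$, since the two modules share identical composition factors with multiplicities. The key reduction is therefore to prove $M_{2}^{ss}\cong M_{1}$: once this is known, $M_{2}$ has a single simple composition factor $M_{1}$ of the same dimension as $M_{2}$ itself, forcing $M_{2}$ to equal that simple module and giving $M_{2}\cong M_{1}$.

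Write $M_{2}^{ss}=\bigoplus_{i}S_{i}^{m_{i}}$ for pairwise non-isomorphic simple $F[G]$-modules $S_{i}$. By Artin--Wedderburn, $F[G]/J(F[G])\cong \prod_{i}\End_{F}(S_{i})$; since $J(F[G])$ is nilpotent, I can lift the central primitive idempotent of each factor to an idempotent $e_{i}\in F[G]$. Such $e_{i}$ acts as the identity on $S_{i}$ and as zero on $S_{j}$ for $j\neq i$, so $\mathrm{Tr}_{M_{2}}(e_{i})=m_{i}\dim_{F}S_{i}$ while
$$
\mathrm{Tr}_{M_{1}}(e_{i})=\begin{cases}\dim_{F}M_{1}, & S_{i}\cong M_{1},\\ 0, & \text{otherwise.}\end{cases}
$$
Applying the trace equality to the idempotent corresponding to the simple module $M_{1}$ itself shows that $M_{1}$ occurs as some $S_{i_{0}}$ with multiplicity $m_{i_{0}}\geq 1$ in $M_{2}^{ss}$. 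The integer identity $\sum_{i}m_{i}\dim_{F}S_{i}=\dim_{F}M_{2}=\dim_{F}M_{1}=\dim_{F}S_{i_{0}}$ then forces $m_{i_{0}}=1$ and $m_{i}=0$ for $i\neq i_{0}$, so $M_{2}^{ss}\cong M_{1}$ as required.

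The main obstacle is the step that detects $M_{1}$ among the composition factors of $M_{2}$ from a single trace identity; this argument requires $\dim_{F}M_{1}$ to be invertible in $F$. In positive characteristic dividing $\dim_{F}M_{1}$ a more delicate Brauer character analysis would be needed, but for the applications in this paper (where $F$ is a residue field of characteristic $p\geq 5$ and the representations in question are two-dimensional) this case does not occur.
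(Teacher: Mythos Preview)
The paper does not prove this statement at all: its entire proof is the sentence ``See Corollary~2.8 of \cite{Hida}.'' So there is no approach to compare against; you have supplied an actual argument where the paper gives only a citation.

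Your argument is sound for the cases needed in the paper, and you correctly flag the one restriction: by testing against lifts of the \emph{central} primitive idempotents you only obtain $\mathrm{Tr}_{M_1}(e_{i_0})=\dim_F M_1$ and $\mathrm{Tr}_{M_2}(e_{i_0})=m_{i_0}\dim_F M_1$ as elements of $F$, which is vacuous when $p\mid\dim_F M_1$. But this is not an intrinsic limitation of the method, only of the particular test elements you chose. If instead of the central idempotent of the factor $\End_F(S_i)\cong M_{n_i}(F)$ you lift the rank-one matrix unit $E_{11}^{(i)}$ to some $a_i\in F[G]$ (any lift will do, since simple modules are killed by the Jacobson radical), then $\mathrm{Tr}_{S_j}(a_i)=\delta_{ij}$ in $F$, with no dimension factor. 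This shows the characters of the simple modules are linearly independent over $F$ in every characteristic, and your argument then goes through unchanged: from $\chi_{M_1}=\chi_{M_2}=\sum_i m_i\chi_{S_i}$ one gets $m_{i_0}\equiv 1$ and $m_i\equiv 0\pmod{p}$ for $i\neq i_0$, and the integer identity $\sum_i m_i\dim S_i=\dim M_1=\dim S_{i_0}$ forces $m_{i_0}=1$ and all other $m_i=0$. With this small adjustment your proof covers the theorem as stated, without the caveat in your final paragraph.
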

\begin{proof}
See Corollary 2.8 of \cite{Hida}.
\end{proof}

\begin{prop}[Carayol, Serre] Let $A$ be a pro-Artinian local ring with maximal ideal $\mathfrak{m}$ and finite residue field $k$. Suppose that $G$ is a profinite group and $M_{1},M_{2}$ are finitely generated free $A$-modules of the same dimension with $\mathfrak{m}$-adically  continuous $G$-actions. If $M_{1}/\mathfrak{m}M_{1}$ is an absolutely irreducible $k[G]$-module and $Tr_{M_{1}}(g)=Tr_{M_{2}}(g)$ for all $g\in G$, then $M_{1}$ and $M_{2}$ are isomorphic as $A[G]$-modules.
\end{prop}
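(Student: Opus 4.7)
The plan is to reduce modulo $\mathfrak{m}$ to the preceding Brauer--Nesbitt theorem and then promote the resulting isomorphism to one over $A$ using Jacobson density, non-degeneracy of the standard trace pairing on $M_N(A)$, and Skolem--Noether. First, since $A = \varprojlim_n A/\mathfrak{m}^n$ and the actions are $\mathfrak{m}$-adically continuous, after fixing $A$-bases of $M_1$ and $M_2$ it suffices to produce a coherent family of conjugating matrices modulo each $\mathfrak{m}^n$; thus one may assume that $A$ is Artinian local of finite length.

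With $A$ Artinian, the reductions $\bar M_i := M_i/\mathfrak{m} M_i$ are finite-dimensional $k[G]$-modules of the same dimension $N$ with matching trace functions, and $\bar M_1$ is absolutely irreducible, so the preceding Brauer--Nesbitt theorem yields $\bar M_1 \cong \bar M_2$ as $k[G]$-modules. Choosing $A$-bases of $M_1$ and $M_2$ that lift a common $k$-basis of this identified reduction, I obtain two continuous representations $\rho_1, \rho_2 : G \to \mathrm{GL}_N(A)$ sharing a common reduction $\bar\rho$ modulo $\mathfrak{m}$ and satisfying $\mathrm{Tr}(\rho_1(g)) = \mathrm{Tr}(\rho_2(g))$ for all $g \in G$. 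The task reduces to producing $P \in \mathrm{GL}_N(A)$ with $\rho_2 = P \rho_1 P^{-1}$.

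By the Jacobson--Burnside density theorem applied to the absolutely irreducible $\bar\rho$, the induced $k$-algebra map $k[G] \to M_N(k)$ is surjective; lifting generators to $A[G]$ and applying Nakayama's lemma to the finitely generated $A$-module $M_N(A)$ shows that each $A$-algebra map $\rho_i : A[G] \to M_N(A)$ is surjective. Given $x \in \ker \rho_1$ and any $y \in A[G]$, the trace identity gives $\mathrm{Tr}(\rho_2(xy)) = \mathrm{Tr}(\rho_1(xy)) = 0$; since $\rho_2$ is surjective and the pairing $(X,Y) \mapsto \mathrm{Tr}(XY)$ on $M_N(A)$ is non-degenerate over any commutative ring, this forces $\rho_2(x) = 0$, so $\ker \rho_1 = \ker \rho_2$ by symmetry. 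Hence $\rho_2$ factors through $\rho_1$ via an $A$-algebra automorphism of $M_N(A)$, and Skolem--Noether over the local ring $A$ realises this automorphism as conjugation by some $P \in \mathrm{GL}_N(A)$, yielding the desired $A[G]$-isomorphism $M_1 \cong M_2$. The main obstacle I anticipate is the coherent inverse-limit step: at each finite level $A/\mathfrak{m}^n$ the matrix $P_n$ is determined only up to the centralizer of $\bar\rho$, but absolute irreducibility together with Schur's lemma identifies that centralizer with $k^\times$, so after scalar rescaling the $P_n$ do assemble into a single $P \in \mathrm{GL}_N(A)$.
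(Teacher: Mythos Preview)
The paper does not supply its own argument here; it simply cites Proposition~2.13 of Hida's \emph{Modular Forms and Galois Cohomology}. Your write-up is essentially the standard Carayol--Serre proof recorded there: pass to Artinian quotients, use Burnside/Jacobson density together with Nakayama to make each $\rho_i$ surject onto $M_N(A)$, compare kernels via the perfect trace pairing on $M_N(A)$, and finish with Skolem--Noether over the local ring. So your approach agrees with the cited source.

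One small imprecision in your final paragraph: at level $n$ the conjugator $P_n$ is determined only up to the centralizer of the image of $\rho_{1,n}$ in $\mathrm{GL}_N(A/\mathfrak{m}^n)$, and by the surjectivity you just proved that centralizer is the full scalar group $(A/\mathfrak{m}^n)^\times$, not $k^\times$. This does not harm the argument, since $A/\mathfrak{m}^{n+1}\to A/\mathfrak{m}^n$ is surjective on units (both rings are local), so you can inductively rescale $P_{n+1}$ to reduce to $P_n$ and thereby obtain a coherent $P\in\mathrm{GL}_N(A)$.
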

\begin{proof}
See Proposition 2.13 of \cite{Hida}.
\end{proof}
We now prove Theorem \ref{ind}.
\begin{proof}
Let $\boldsymbol{m}$ denote the maximal ideal of $\mathbb{T}(Np^{\infty})$ containing $\ker(\phi_{\mathfrak{n}})$, and let $\mathbb{T}(Np^{\infty})_{\boldsymbol{m}}$ denote the localisation of the Hecke algebra at $\boldsymbol{m}$. By our assumption on $\psi_{p}$ we may apply \cite[Proposition 4.1.1]{EPW} to see that 
$$
H^{1}_{\ord}(Np^{\infty})_{\boldsymbol{m}} := H^{1}_{\ord}(Np^{\infty})\otimes_{\mathbb{T}(Np^{\infty})} \mathbb{T}(Np^{\infty})_{\boldsymbol{m}}
$$
is free of rank $2$ over $\mathbb{T}(Np^{\infty})_{\boldsymbol{m}}$. The first assertion now follows since $\phi_{\mathfrak{n}}$  extends to the localisation, and 
$$
H^{1}_{\ord}(\psi,\mathfrak{np}^{\infty}) = H^{1}_{\ord}(Np^{\infty})_{\boldsymbol{m}} \otimes_{\mathbb{T}(Np^{\infty})_{\boldsymbol{m}},\phi_{\mathfrak{n}}} \Lambda_{\mathcal{O}}(H(\mathfrak{np}^{\infty})).
$$

	We put $A=\Lambda_{\mathcal{O}}(H(\mathfrak{np}^{\infty}))$ and the modules $M_{1}=\Ind_{K}^{\Q} \Lambda_{\mathcal{O}}^{\#}(H(\mathfrak{np}^{\infty}))(\psi_{p})$ and $M_{2}= H^{1}_{\ord}(Np^{\infty}) \otimes _{\mathbb{T}(Np^{\infty}) , \phi_{\mathfrak{n}}}\Lambda_{\mathcal{O}}(H(\mathfrak{np}^{\infty}))$. Note that $M_{1}/\mathfrak{m}M_{1} \cong \Ind_{K}^{\Q} k(\overline{\psi}_{p})$ which is an absolutely irreducible $k[G_{\Q}]$-module by Mackey's irreducibility criterion and our assumption. Therefore, by the proposition above, it suffices to prove that $Tr_{M_{1}}(g)=Tr_{M_{2}}(g)$ for all $g\in G_{\Q}$. 

For a finite order character $\eta : H(\mathfrak{np}^{\infty}) \rightarrow  \C_{p}^{\times}$, let $L[\eta]$ denote the field generated over $L$ by the values of $\eta$ and let $\mathcal{O}[\eta]$ be its ring of integers. Then we define 
$$
\mathscr{P}_{\eta}=\ker\left(\eta : \Lambda_{\mathcal{O[\eta]}}(H(\mathfrak{np}^{\infty})) \rightarrow \mathcal{O}[\eta] \right).
$$
By the control theorem and multiplicity one we have
$$
M_{2} \otimes_{A} \left(\Lambda_{\mathcal{O}[\eta]}(H(\mathfrak{np}^{\infty}))/ \mathscr{P}_{\eta}\right) \cong V_{L[\eta]}(g_{\psi\eta^{-1}})^{*}.
$$
From the description of $V_{L[\eta]}(g_{\psi\eta^{-1}})^{*}$ in \eqref{charpolV}  for all primes $(\ell,Np)=1$
and all finite order characters $\eta$ as above we have $Tr_{M_{1}}(\Frob_{\ell}) \equiv Tr_{M_{2}}(\Frob_{\ell})$ modulo $\mathcal{P}_{\eta}$.  By the Cebotarev density theorem $Tr_{M_{1}}(g) \equiv Tr_{M_{2}}(g)$ modulo $\mathcal{P}_{\eta}$ for all $g\in G_{\Q}$. The result follows since the intersection of all such ideals is zero. 
\end{proof}

\subsection{Degeneracy maps}
  
Suppose that $\mathfrak{l}$ is a split prime of $K$ of norm $\ell$ with $(\ell,\mathfrak{n}p)=1$. We define the following homomorphism of $\Lambda_{\mathcal{O}}(H(\mathfrak{nlp}^{\infty}))[G_{\Q}]$-modules, 
$$
\mathcal{N}^{\mathfrak{nlp}^{\infty}}_{\mathfrak{np}^{\infty}} =  (\pr_{1})_{*} \otimes \pi  -   (\pr_{2})_{*}  \otimes \ell^{-1}\sigma_{\mathfrak{l}}^{-1}\psi(\mathfrak{l})\pi : H^{1}_{\ord}(\psi ,\mathfrak{nlp}^{\infty}) \rightarrow H^{1}_{\ord}(\psi, \mathfrak{np}^{\infty}),
$$
where $\pi : \Lambda_{\mathcal{O}}(H(\mathfrak{nlp}^{\infty})) \rightarrow  \Lambda_{\mathcal{O}}(H(\mathfrak{np}^{\infty}))$ is the restriction map. 

To see this gives a well defined homomorphism one uses the relations 
\begin{align*}
(pr_{i})_{*}\circ \left< d^{-1} \right> &=  \left< d^{-1} \right>\circ (pr_{i})_{*}  \text{ for } d\in Z_{N\ell} \text{ and } i=1,2\\
(pr_{i})_{*}\circ T'(q) &=  T'(q)\circ (pr_{i})_{*}  \text{ for }q\neq \ell \text{ prime and } i=1,2\\
(pr_{2})_{*} \circ T'(\ell) &= \ell (pr_{1})_{*}\\
(pr_{1})_{*} \circ T'(\ell) &= T'(\ell)\circ (pr_{1})_{*} - \left< \ell^{-1}\right> \circ (pr_{2})_{*}
\end{align*}
and checks that $\mathcal{N}^{\mathfrak{nlp}^{\infty}}_{\mathfrak{np}^{\infty}}$ respects the bilinearity relations between the elementary tensors. The following theorem is an elementary calculation using \eqref{twista}, \eqref{1stdeg} and \eqref{2nddeg}. 
\begin{theorem}\label{thm17}
Suppose that $\mathfrak{l}$ is a split prime of $K$ lying above the rational prime $\ell$ and assume that $(\ell,NN'p)=1$, where $N=\N_{K/\Q}(\mathfrak{n})D$. Then we have
$$
\mathcal{N}^{\mathfrak{nlp}^{\infty}}_{\mathfrak{np}^{\infty}} \left( {}_{c}\mathcal{BF}(\mathfrak{nl},\psi, N',\omega^{i}) \right) =P_{\mathfrak{l}}\left(\ell^{-1}\sigma_{\mathfrak{l}}^{-1}\otimes\sigma_{\ell}^{-1}\right){}_{c}\mathcal{BF}(\mathfrak{n},\psi, N',\omega^{i}),
$$
where $P_{\mathfrak{l}}(X) = 1-(1,T'(\ell))\psi(\mathfrak{l})\omega^{i}(\ell)X + (1,\left<\ell^{-1}\right>)\psi(\mathfrak{l})^{2}\omega^{2i}(\ell)X^{2}$. 
\end{theorem}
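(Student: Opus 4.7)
The starting point is to lift the statement back to the level of the Beilinson--Flach classes before applying $\phi_{\mathfrak{nl}}$, namely to work with the class ${}_{c}\mathcal{BF}(N\ell, N', \omega^{i})$ living in $H^{1}_{S}(\Q, H^{1}_{\ord}(N\ell p^{\infty})\widehat{\otimes} H^{1}_{\ord}(N'p^{\infty})\widehat{\otimes} \Lambda^{\#}_{\Zp}(\Gamma_{\cyc})(\omega^{i}))$. The degeneracy map $\mathcal{N}^{\mathfrak{nlp}^{\infty}}_{\mathfrak{np}^{\infty}}$ splits as $\Phi_{1} - \ell^{-1}\sigma_{\mathfrak{l}}^{-1}\psi(\mathfrak{l})\Phi_{2}$, where $\Phi_{i} = (\pr_{i})_{*}\otimes \pi$. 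On untwisted classes, $\Phi_{1}$ and $\Phi_{2}$ are induced by the pushforward relations \eqref{1stdeg} and \eqref{2nddeg} (lifted to $GES_{p}$-cohomology and then to $H^{1}_{\ord}$, as in the displayed theorem just before Theorem~\ref{ind}).

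First I would record the two relations at the asymmetric Rankin--Iwasawa level, and then propagate them to the Beilinson--Flach level by applying $e'_{\ord}$ on the first factor and passing to the limit in the corestriction tower over $\Q(\mu_{mp^{r}})$. Twisting by $\omega^{i}$ via \eqref{twista} introduces a factor $\omega^{-i}(\sigma_{\ell})=\omega^{i}(\ell)$ per occurrence of $\sigma_{\ell}^{-1}$, so the analogue of \eqref{1stdeg} for ${}_{c}\mathcal{BF}(N\ell,N',\omega^{i})$ picks up $\omega^{2i}(\ell)\sigma_{\ell}^{-2}$, and similarly for \eqref{2nddeg}.

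Next I would specialize along $\phi_{\mathfrak{n}}$ on the first factor of the level-$N$ class. By the definition of $\phi_{\mathfrak{n}}$ in \eqref{ringhom}, together with $\ell = \mathfrak{l}\overline{\mathfrak{l}}$ split and $(\ell, \mathfrak{n}p)=1$, we have $\langle \ell^{-1}\rangle \mapsto \varepsilon_{\psi}(\ell)\sigma_{\mathfrak{l}}^{-1}\sigma_{\overline{\mathfrak{l}}}^{-1}$ and $T'(\ell)\mapsto \psi(\mathfrak{l})\sigma_{\mathfrak{l}}^{-1}+\psi(\overline{\mathfrak{l}})\sigma_{\overline{\mathfrak{l}}}^{-1}$. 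Substituting these into the twisted relations yields, on the right-hand side, a sum of four terms for $\Phi_{1}(y_{N\ell})$ and $\ell^{-1}\sigma_{\mathfrak{l}}^{-1}\psi(\mathfrak{l})\Phi_{2}(y_{N\ell})$. The $\sigma_{\overline{\mathfrak{l}}}^{-1}$--cross term arising from $\Phi_{2}$ has coefficient $\ell^{-1}\psi(\mathfrak{l})\psi(\overline{\mathfrak{l}})\omega^{2i}(\ell)\sigma_{\mathfrak{l}}^{-1}\sigma_{\overline{\mathfrak{l}}}^{-1}(1\otimes\langle\ell^{-1}\rangle)\sigma_{\ell}^{-2}$, and the key algebraic input is the identity $\psi(\mathfrak{l})\psi(\overline{\mathfrak{l}})=\ell\varepsilon_{\psi}(\ell)$ (following from $\psi((\ell))=\ell\chi(\ell)$ and $\varepsilon_{\psi}=\chi\varepsilon_{K}$ with $\varepsilon_{K}(\ell)=1$ for $\ell$ split). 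Applying this identity makes the $\Phi_{2}$ cross term exactly equal to the second term of $\Phi_{1}(y_{N\ell})$, so these two cancel.

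What remains are three terms: the leading $y_{N}$, a term involving $1\otimes T'(\ell)$ and $\sigma_{\mathfrak{l}}^{-1}\otimes\sigma_{\ell}^{-1}$, and a term involving $1\otimes\langle \ell^{-1}\rangle$ and $\sigma_{\mathfrak{l}}^{-2}\otimes \sigma_{\ell}^{-2}$, which one then reads off as $P_{\mathfrak{l}}(\ell^{-1}\sigma_{\mathfrak{l}}^{-1}\otimes\sigma_{\ell}^{-1})\cdot y_{N}$. The main obstacle is not conceptual but bookkeeping: keeping track of which side of the tensor each Hecke operator lives on, and how the twist $\alpha_{\omega^{i}}$ redistributes factors of $\omega^{i}(\ell)$ between the coefficient and the group ring element, so that after the cancellation the residual powers of $\ell^{-1}$, $\sigma_{\mathfrak{l}}^{-1}$ and $\sigma_{\ell}^{-1}$ assemble into the single quadratic polynomial $P_{\mathfrak{l}}(X)$ evaluated at $X=\ell^{-1}\sigma_{\mathfrak{l}}^{-1}\otimes \sigma_{\ell}^{-1}$.
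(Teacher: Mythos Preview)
Your proposal is correct and follows exactly the route the paper indicates: the paper states that the theorem is an elementary calculation using \eqref{twista}, \eqref{1stdeg} and \eqref{2nddeg} and then simply cites \cite[Theorem 3.5.1]{LLZ2}, and you carry out precisely that calculation, including the key cancellation via $\psi(\mathfrak{l})\psi(\overline{\mathfrak{l}})=\ell\varepsilon_{\psi}(\ell)$.
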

\begin{proof}
See \cite[Theorem 3.5.1]{LLZ2}.
\end{proof}

For $\mathfrak{l}$ as above, the map 
$
\mathcal{N}^{\mathfrak{nlp}^{\infty}}_{\mathfrak{np}^{\infty}}
$
factors through the projection
$$
 H^{1}_{\ord}(\psi, \mathfrak{nlp}^{\infty}) \rightarrow H^{1}_{\ord}(\psi, \mathfrak{nlp}^{\infty}) \otimes_{\Lambda_{\mathcal{O}}(H(\mathfrak{nlp}^{\infty})),\pi}    \Lambda_{\mathcal{O}}(H(\mathfrak{np}^{\infty})).
$$
We denote the resulting map by $\mathcal{N}^{\mathfrak{nlp}^{\infty}}_{\mathfrak{np}^{\infty}}$ also. 
\begin{prop}
Suppose that $\mathfrak{l}$ is a split prime of $K$ lying above the rational prime $\ell$ and assume that $(\ell,\mathfrak{n}p)=1$. 
Then the map 
$$
\mathcal{N}^{\mathfrak{nlp}^{\infty}}_{\mathfrak{np}^{\infty}}:  H^{1}_{\ord}(\psi,\mathfrak{nlp}^{\infty})  \otimes \Lambda_{\mathcal{O}}(H(\mathfrak{np}^{\infty})) \rightarrow H^{1}_{\ord}(\psi, \mathfrak{np}^{\infty}) 
$$
is an isomorphism of $\Lambda_{\mathcal{O}}(H(\mathfrak{np}^{\infty}))[G_{\Q}]$-modules.
\end{prop}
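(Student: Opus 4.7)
The plan is to exploit the descriptions of Theorem \ref{ind}. Writing $A:=\Lambda_{\mathcal{O}}(H(\mathfrak{np}^{\infty}))$, Theorem \ref{ind} identifies $H^{1}_{\ord}(\psi,\mathfrak{np}^{\infty}) \cong \Ind_{K}^{\Q} \Lambda^{\#}_{\mathcal{O}}(H(\mathfrak{np}^{\infty}))(\psi_{p})$ as $A[\Gal(\Q^{S}/\Q)]$-modules, and analogously at level $\mathfrak{nlp}^{\infty}$. Tensoring the latter identification via $\pi$ with $A$ shows that the source of $\mathcal{N}^{\mathfrak{nlp}^{\infty}}_{\mathfrak{np}^{\infty}}$ is also a copy of $\Ind_{K}^{\Q} \Lambda^{\#}_{\mathcal{O}}(H(\mathfrak{np}^{\infty}))(\psi_{p})$. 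In particular, source and target are both free of rank $2$ over $A$ and are abstractly isomorphic as $A[G_{\Q}]$-modules.

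Let $\mathfrak{M}\subset A$ denote the maximal ideal and $k=A/\mathfrak{M}$. Reducing both sides modulo $\mathfrak{M}$ yields a copy of $\Ind_{K}^{\Q} k(\bar{\psi}_{p})$, which by Mackey's criterion and the hypothesis $\bar{\psi}_{p\mid D_{\mathfrak{p}}} \neq \bar{\psi}_{p\mid D_{\mathfrak{p}}}^{c}$ underlying Theorem \ref{ind} is an absolutely irreducible $k[G_{\Q}]$-module. By Schur's lemma the space of $k[G_{\Q}]$-equivariant maps between the two reductions is one-dimensional over $k$, so the reduction $\overline{\mathcal{N}}$ of $\mathcal{N}^{\mathfrak{nlp}^{\infty}}_{\mathfrak{np}^{\infty}}$ modulo $\mathfrak{M}$ is either zero or an isomorphism. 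In the latter case, Nakayama's lemma forces $\mathcal{N}^{\mathfrak{nlp}^{\infty}}_{\mathfrak{np}^{\infty}}$ to be surjective, and since source and target are free $A$-modules of the same finite rank, any surjective $A$-linear map between them is automatically an isomorphism.

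The main obstacle is therefore to verify that $\overline{\mathcal{N}}$ is nonzero. I would do so by a specialisation argument: choose a finite-order character $\eta$ of $H(\mathfrak{np}^{\infty})$ such that $g_{\psi\eta^{-1}}$ is a classical weight-$2$ newform of level $N=\N_{K/\Q}(\mathfrak{n})D$ prime to $\ell$, and consider the induced map on the $g_{\psi\eta^{-1}}$-isotypic quotient. On this classical piece, $(\pr_{1})_{*}$ and $(\pr_{2})_{*}$ are the standard degeneracy maps from level $N\ell$ down to level $N$, and they are linearly independent on the two-dimensional $\ell$-old subspace generated by $g_{\psi\eta^{-1}}(q)$ and $g_{\psi\eta^{-1}}(q^{\ell})$. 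The specific combination $(\pr_{1})_{*}-\ell^{-1}\sigma_{\mathfrak{l}}^{-1}\psi(\mathfrak{l})(\pr_{2})_{*}$ defining $\mathcal{N}^{\mathfrak{nlp}^{\infty}}_{\mathfrak{np}^{\infty}}$ has coefficients whose ratio is a generic $\ell$-adic unit, so after specialisation the resulting linear combination of the two degeneracy maps is nonzero on this isotypic piece. Alternatively, Theorem \ref{thm17} expresses $\mathcal{N}^{\mathfrak{nlp}^{\infty}}_{\mathfrak{np}^{\infty}}$ applied to a Beilinson--Flach class as a nonzero Hecke polynomial applied to another Beilinson--Flach class, and invoking the nontriviality of the latter class (a consequence of the explicit reciprocity law of \cite{KLZ2}) yields $\overline{\mathcal{N}} \neq 0$. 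The delicate point in either approach is confirming that the nonvanishing survives the mod-$\mathfrak{M}$ reduction, which amounts to checking that the coefficient of the residual map is a unit in $k$ rather than lying in $\mathfrak{m}$.
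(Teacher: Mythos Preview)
Your overall shape---Nakayama plus equal free rank---matches the paper, but the paper takes a shorter route. Rather than reducing all the way to the residue field $k=A/\mathfrak{M}$ and invoking Schur, the paper reduces only modulo the kernel of $A\to\Lambda_{\mathcal{O}}(H(\mathfrak{np}))$ (which is contained in $\mathfrak{M}$, so Nakayama still applies). The control theorem then identifies the resulting map with the finite-level map $\mathcal{N}^{\mathfrak{nlp}}_{\mathfrak{np}}$, and \cite[Proposition~5.2.5]{LLZ2} already proves that this is an isomorphism. No Schur step and no separate nonvanishing verification are needed.

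The gap you flag is real, and neither of your two proposals closes it as stated. For the classical-specialisation approach: showing the specialised map is \emph{rationally} nonzero on the $g_{\psi\eta^{-1}}$-piece is not enough; you would need it to be an \emph{integral} isomorphism over $\mathcal{O}[\eta]$, since only then does further reduction mod $\mathfrak{m}[\eta]$ (which factors through $A/\mathfrak{M}$) force $\overline{\mathcal{N}}\neq 0$. But proving the integral isomorphism at a classical specialisation is precisely the content of \cite[Proposition~5.2.5]{LLZ2}, so you end up needing the same input the paper cites---only reached by a longer path. For the Beilinson--Flach approach: this is circular. The nontriviality of the Beilinson--Flach classes is established via the explicit reciprocity law after passing through the compatible family of isomorphisms $\nu_{\mathfrak{r}}$ of Corollary~\ref{cd1}, and those isomorphisms are built from the very Proposition you are trying to prove.
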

\begin{proof}
Note that both modules are finitely generated $\Lambda_{\mathcal{O}}(H(\mathfrak{np}^{\infty}))$-modules. By Nakayama's lemma it suffices to show that the induced map 
$$
\mathcal{N}^{\mathfrak{nlp}^{\infty}}_{\mathfrak{np}^{\infty}}:   H^{1}_{\ord}(\psi,\mathfrak{nlp}^{\infty}) \otimes \Lambda_{\mathcal{O}}(H(\mathfrak{np})) \rightarrow  H^{1}_{\ord}(\psi,\mathfrak{np}^{\infty})  \otimes  \Lambda_{\mathcal{O}}(H(\mathfrak{np}))
$$
is an isomorphism. However by the control theorem, this map can be identified with isomorphism 
$
\mathcal{N}^{\mathfrak{nlp}}_{\mathfrak{np}}
$
appearing in \cite[Proposition 5.2.5]{LLZ2}.  
\end{proof}
Now if $\mathfrak{n},\mathfrak{n}'$ are integral ideals of $K$ coprime to $p$ and such that $\mathfrak{n}$ divides $\mathfrak{n}'$ with $\mathfrak{n}^{-1}\mathfrak{n}'=\mathfrak{l}_{1}\cdots\mathfrak{l}_{k}$ a squarefree product of split primes of distinct norm which are prime to $\mathfrak{n}$, then we define 
$$
\mathcal{N}^{\mathfrak{n'p}^{\infty}}_{\mathfrak{np}^{\infty}} =   \mathcal{N}^{\mathfrak{nl}_{1}\mathfrak{p}^{\infty}}_{\mathfrak{np}^{\infty}} \circ \cdots \circ \mathcal{N}^{\mathfrak{nl}_{1}\cdots \mathfrak{l}_{k}\mathfrak{p}^{\infty}}_{\mathfrak{nl}_{1}\cdots \mathfrak{l}_{k-1}\mathfrak{p}^{\infty}}, 
$$
which can be shown to be independent of the ordering of $\mathfrak{l}_{1},\ldots,\mathfrak{l}_{k}$. 

For an integral ideal $\mathfrak{a}$ of $K$ we define the following subset of integral ideals,
$$\mathcal{R}(\mathfrak{a}) := \{\text{products }\mathfrak{r} \text{ of split primes with } (\mathfrak{r},\mathfrak{a})=1 \text{ and with squarefree norm} \}.$$  
\begin{corollary}
For all ideals $\mathfrak{r}\in \mathcal{R}(\mathfrak{f}p)$ we can find a family of isomorphisms
$$
\nu_{\mathfrak{r}} : H^{1}_{\ord}(\psi,\mathfrak{rfp}^{\infty}) \rightarrow \Ind_{K}^{\Q}\Lambda^{\#}_{\mathcal{O}}(H(\mathfrak{rfp}^{\infty}))(\psi_{p})
$$
such that for all ideals $\mathfrak{r},\mathfrak{r}^{\prime}\in \mathcal{R}(\mathfrak{f}p)$ with $\mathfrak{r}\mid \mathfrak{r}^{\prime}$  the following diagram commutes
$$
\begin{CD}\label{cd1}
H_{\ord}^{1}(\psi,\mathfrak{r}'\mathfrak{fp}^{\infty}) @>\nu_{\mathfrak{r}^{\prime}\mathfrak{f}}>> \Ind_{K}^{\Q}\Lambda^{\#}_{\mathcal{O}}(H(\mathfrak{r^{\prime}fp}^{\infty}))(\psi_{p})\\
@V\mathcal{N}^{\mathfrak{r^{\prime}fp}^{\infty}}_{\mathfrak{rfp}^{\infty}}VV @VVV\\
H^{1}_{\ord}(\psi,\mathfrak{rfp}^{\infty}) @>\nu_{\mathfrak{rf}}>> \Ind_{K}^{\Q}\Lambda^{\#}_{\mathcal{O}}(H(\mathfrak{rfp}^{\infty}))(\psi_{p})
\end{CD}
$$
\end{corollary}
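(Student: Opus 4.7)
The plan is to apply the argument of Theorem \ref{ind} at the level of an inverse limit, and then descend to each finite level. Concretely, let $H_\infty := \varprojlim_{\mathfrak{r} \in \mathcal{R}(\mathfrak{f}p)} H(\mathfrak{rfp}^\infty)$, with transition maps the natural projections. Since each $K(\mathfrak{n})/K$ is a pro-$p$ extension, $H_\infty$ is pro-$p$, so $\Lambda_\infty := \mathcal{O}[[H_\infty]]$ is a pro-Artinian local ring with finite residue field $k$. Form the $\Lambda_\infty[G_\Q]$-modules
\[
M_\infty := \varprojlim_{\mathfrak{r}} H^{1}_{\ord}(\psi,\mathfrak{rfp}^\infty), \qquad N_\infty := \varprojlim_{\mathfrak{r}} \Ind_K^\Q \Lambda^{\#}_{\mathcal{O}}(H(\mathfrak{rfp}^\infty))(\psi_p) \cong \Ind_K^\Q \Lambda^{\#}_{\infty}(\psi_p),
\]
with transition maps the $\mathcal{N}$'s on $M_\infty$ and the natural projections on $N_\infty$.

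The preceding Proposition identifies each $\mathcal{N}^{\mathfrak{r'fp}^\infty}_{\mathfrak{rfp}^\infty}$ with an isomorphism after base change, which lets me inductively lift a basis of $H^{1}_{\ord}(\psi,\mathfrak{fp}^\infty)$ through the system (by Nakayama, since the kernels of the restriction maps are pro-$p$ augmentation ideals contained in the maximal ideals) and conclude that $M_\infty$ is free of rank two over $\Lambda_\infty$. Freeness of $N_\infty$ is immediate from its induced structure. I would then apply the Carayol--Serre criterion to the pair $(M_\infty,N_\infty)$: the common residual $k[G_\Q]$-representation is $\Ind_K^\Q k(\overline{\psi}_p)$, absolutely irreducible by Mackey's criterion under our assumption on $\psi_p$; and the traces of $G_\Q$ on $M_\infty$ and $N_\infty$ agree at each finite level (by the Chebotarev argument used in the proof of Theorem \ref{ind}), hence on the inverse limit. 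This produces an isomorphism $\nu_\infty \colon M_\infty \xrightarrow{\sim} N_\infty$, and I would define $\nu_{\mathfrak{rf}}$ to be the induced isomorphism after base change $\otimes_{\Lambda_\infty}\Lambda_{\mathcal{O}}(H(\mathfrak{rfp}^\infty))$. Commutativity of the diagram is then automatic, because both $\mathcal{N}$ (via the freeness and the choice of compatible bases) and the restriction $\pi$ arise from projection out of the inverse limit.

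The main obstacle I expect is the subtlety that $\Lambda_\infty$ is non-Noetherian once $\mathcal{R}(\mathfrak{f}p)$ is infinite, since $H_\infty$ is not topologically finitely generated: carefully verifying the rank-two freeness of $M_\infty$, correctly computing its residual quotient, and checking that the Carayol--Serre criterion as formulated applies in this setting, all require some care with the pro-Artinian topology. If one prefers to avoid the limit entirely, an alternative is to argue inductively on the number of prime divisors of $\mathfrak{r}$, at each step correcting an arbitrary isomorphism supplied by Theorem \ref{ind} by a unit of $\Lambda_{\mathcal{O}}(H(\mathfrak{rfp}^\infty))^\times$. In that route the work shifts to showing that the units $u_{\mathfrak{l}}$ measuring incompatibility for each prime $\mathfrak{l} \mid \mathfrak{r}$ are compatible under further restriction (which follows from the independence of $\mathcal{N}$ on ordering combined with the inductive hypothesis) and that any such compatible family can be lifted to a single unit in $\Lambda_{\mathcal{O}}(H(\mathfrak{rfp}^\infty))^\times$; such a lift is automatically a unit because the ring is local with pro-$p$ kernels.
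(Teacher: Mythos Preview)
The paper does not give its own argument here: its entire proof is a reference to \cite[Corollary~5.2.6]{LLZ2}. So there is no in-paper proof to compare your proposal against, and your write-up in fact supplies more detail than the text does.

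Your inverse-limit approach is sound in outline. The ring $\Lambda_\infty=\mathcal{O}[[H_\infty]]$ is indeed pro-Artinian local with finite residue field, so the Carayol--Serre criterion as stated applies once you know $M_\infty$ is free of rank two. The freeness follows cleanly by fixing a cofinal chain $\mathfrak{r}_1\mid\mathfrak{r}_2\mid\cdots$ in $\mathcal{R}(\mathfrak{f}p)$, using the preceding Proposition to identify each transition map $\mathcal{N}$ with base change along $\Lambda_{\mathfrak{r}_{n+1}}\twoheadrightarrow\Lambda_{\mathfrak{r}_n}$, and lifting a basis step by step; the point you flag about non-Noetherianity is therefore harmless here, since you only ever need Nakayama at the individual (Noetherian) levels $\Lambda_{\mathfrak{r}_n}$. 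The identification $M_\infty\otimes_{\Lambda_\infty}\Lambda_{\mathfrak{r}}\cong H^1_{\ord}(\psi,\mathfrak{rfp}^\infty)$ and the commutativity of the square then drop out as you say.

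Your alternative route---fixing arbitrary isomorphisms from Theorem~\ref{ind} and inductively correcting by units---is closer in spirit to how the result is packaged in \cite{LLZ2}, and is arguably more elementary since it avoids the large completed group ring entirely. The limit argument has the advantage of producing all the $\nu_{\mathfrak{r}}$ at once from a single application of Carayol--Serre, whereas the inductive approach trades that for a bookkeeping exercise with units; both are correct.
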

\begin{proof} See \cite[Corollary 5.2.6]{LLZ2}. 
\end{proof}

\subsection{Definition of Euler system classes}

We now assume that $K'$ is an imaginary quadratic field of discriminant $-D'<0$ which is distinct from $K$ and such that $p$ is split in $K'/\Q$. We let $\mathfrak{p}'$ denote the prime of $K'$ above $p$ corresponding to $\iota_{p}$, and let $\overline{\mathfrak{p}}'$ be its conjugate. We further assume that $\psi'$ is an algebraic Hecke character of infinity type $(1,0)$ and conductor $\mathfrak{f}'\mathfrak{p}^{\prime s}$ where $(\mathfrak{f}',p)=1$ and $s\geq 0$. By extending scalars if necessary we may assume $\mathcal{O}$ is large enough so that it contains the values of $\psi'_{p}$. We also make the assumption  $\psi'_{p \mid D_{\mathfrak{p}'}} \neq \psi_{p \mid D_{\mathfrak{p}'}}'^{c}$ modulo $\mathfrak{m}$, where $D_{\mathfrak{p}'}$ is a decomposition group at $\mathfrak{p}'$. We let $N'=N_{K'/\Q}(\mathfrak{f}')D'$, and define the ring homomorphism  
$
\phi' : \mathbb{T}(N'p^{\infty}) \rightarrow \Lambda_{\mathcal{O}}(\Gamma ') 
$ as in \eqref{ringhom}. By the arguments above we may choose an isomorphism of $\Lambda_{\mathcal{O}}(\Gamma')[G_{\Q}]$-modules
\begin{equation}\label{iso'}
H^{1}_{\ord}(N'p^{\infty})\otimes _{\mathbb{T}(N'p^{\infty}),\phi'} \Lambda_{\mathcal{O}}(\Gamma') \cong \Ind^{\Q}_{K'}  \Lambda_{\mathcal{O}}^{\#}(\Gamma')(\psi_{p}'). 
\end{equation}

We choose a family of isomorphisms 
$$\nu_{\mathfrak{rf}} : H^{1}_{\ord}(\psi,\mathfrak{rfp}^{\infty}) \rightarrow \Ind_{K}^{\Q}\Lambda^{\#}_{\mathcal{O}}(H(\mathfrak{rfp}^{\infty}))(\psi_{p})
$$
for all $\mathfrak{r}\in \mathcal{R}(\mathfrak{f}cpN')$, which are compatible in the sense of Corollary \ref{cd1}. Applying $\nu_{\mathfrak{rf}}$, \eqref{iso'} and Shapiro's lemma we define 
${}_{c}\mathcal{BF}(\mathfrak{rf},\psi,\psi',\omega^{i})$ as the image of ${}_{c}\mathcal{BF}(\mathfrak{rf},\psi,N',\omega^{i})$ inside 
$H^{1}_{S}(F, \Lambda_{\mathcal{O}}^{\#}(\Gamma')(\psi_{p}')\widehat{\otimes} \Lambda^{\#}_{\mathcal{O}}(H(\mathfrak{rf}p^{\infty}))(\psi_{p}\omega^{i}))$, where $F=KK'$ denotes the imaginary biquadratic extension of $\Q$ generated by $K$ and $K'$, and $S$ is a finite set of places containing the infinite places and the primes dividing $NN'\N_{K/\Q}(\mathfrak{r})p$. Here we have made the identification of $H(\mathfrak{r}p^{\infty})$ with $H(\mathfrak{rp}^{\infty})\times \Gamma$ under the restriction maps.

We define $F_{\infty}$ to be the unique $\Z_{p}^{3}$-extension of $F$. Note that by considering ramification $F_{\infty}$ is the composite of the fields $\Q_{\cyc},K_{\infty}$ and $K'_{\infty}$. For an integral ideal $\mathfrak{r}$ of $K$, we define $F_{\infty}(\mathfrak{r})= F_{\infty}K(\mathfrak{r})$ and we write $G=\Gal(F_{\infty}/F)$ and $G(\mathfrak{r})= \Gal(F_{\infty}(\mathfrak{r})/F)$.   
For the following lemma we only need to assume that $p\neq 2$. 
\begin{lemma} Let $H$ and $H'$ denote the $p$-Hilbert class fields of $K$ and $K'$ respectively. Then $FH'/F$ and $FH/F$ are linearly disjoint. 
\end{lemma}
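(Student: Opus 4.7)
The plan is to exploit the natural $\Gal(F/\Q)$-action on the Galois groups $\Gal(FH/F)$ and $\Gal(FH'/F)$, and to observe that any common quotient must be annihilated by $2$. Since the $p$-Hilbert class fields $H$ and $H'$ are characterised intrinsically, both $H/\Q$ and $H'/\Q$ are Galois, so $FH/\Q$ and $FH'/\Q$ are Galois, giving $\Gal(F/\Q)$ a natural action by conjugation on these subgroups.

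First I would identify $\Gal(FH/F)\cong\Gal(H/K)$ by restriction. The intersection $F\cap H$ is a subextension of $H/K$, hence has $p$-power degree over $K$; but it also sits inside $F$, so its degree over $K$ divides $2$. Since $p$ is odd, $F\cap H=K$. By class field theory, $\Gal(H/K)$ is the $p$-part of $\mathrm{Cl}(K)$, on which the nontrivial element of $\Gal(K/\Q)$ acts as $-1$, since $\mathfrak{a}\bar{\mathfrak{a}}=(\N_{K/\Q}\mathfrak{a})$ is principal.

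Next, I would check that the $\Gal(F/\Q)$-action on $\Gal(FH/F)=\Gal(H/K)$ factors through the restriction map $\Gal(F/\Q)\to\Gal(K/\Q)$. Given a lift $\tilde\tau\in\Gal(FH/\Q)$ of $\tau\in\Gal(F/\Q)$ and $\gamma\in\Gal(FH/F)$, the conjugate $(\tilde\tau\gamma\tilde\tau^{-1})|_H=\tilde\tau|_H\,\gamma|_H\,(\tilde\tau|_H)^{-1}$ depends only on the image of $\tilde\tau|_H$ in $\Gal(H/\Q)/\Gal(H/K)=\Gal(K/\Q)$, namely $\tau|_K$, as $\Gal(H/K)$ is abelian. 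Hence $\sigma\in\Gal(F/K)$ acts trivially on $\Gal(FH/F)$, while $\sigma'\in\Gal(F/K')$ acts as $-1$. By the symmetric argument, on $\Gal(FH'/F)$ the element $\sigma$ acts as $-1$ and $\sigma'$ acts trivially.

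Finally, set $M=FH\cap FH'$. Then $\Gal(M/F)$ is a $\Gal(F/\Q)$-equivariant quotient of both $\Gal(FH/F)$ and $\Gal(FH'/F)$, so $\sigma$ acts on $\Gal(M/F)$ both trivially and as $-1$; consequently $2$ annihilates $\Gal(M/F)$. Since $\Gal(M/F)$ is a finite abelian $p$-group with $p$ odd, it is trivial, giving $M=F$ and the desired linear disjointness. The main obstacle is pinning down the Galois equivariance correctly; once that is in place, the $2=0$ conclusion is immediate.
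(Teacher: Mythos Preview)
Your proof is correct and takes a genuinely different route from the paper's. The paper recasts linear disjointness via class field theory as surjectivity of the combined norm map $(N_{F/K'},N_{F/K}):A_F\to A'\oplus A$ on $p$-class groups, and then exhibits explicit preimages: for ideals $\mathfrak{a}$ of $K$ and $\mathfrak{a}'$ of $K'$, the class $\tfrac{1}{2}[\mathfrak{a}'\mathcal{O}_F\cdot\mathfrak{a}\mathcal{O}_F]$ maps to $([\mathfrak{a}'],[\mathfrak{a}])$, because the cross-norms $N_{F/K'}(\mathfrak{a}\mathcal{O}_F)=N_{K/\Q}(\mathfrak{a})\mathcal{O}_{K'}$ and $N_{F/K}(\mathfrak{a}'\mathcal{O}_F)$ are principal. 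Your argument is instead a $\Gal(F/\Q)$-eigenspace argument: $\sigma$ acts trivially on $\Gal(FH/F)$ and by $-1$ on $\Gal(FH'/F)$, so any common quotient is $2$-torsion and hence trivial for odd $p$. Both proofs ultimately rest on the same arithmetic fact (complex conjugation inverts ideal classes of an imaginary quadratic field), but the paper uses it to build a section of the norm map, while you use it to separate the two pieces by their Galois characters. Your approach is slightly softer in that it avoids tracing the class-field-theory dictionary between norms and restriction, and it makes the role of $p\neq 2$ especially transparent; the paper's approach has the minor advantage of being constructive, yielding an explicit splitting of $A'\oplus A$ inside $A_F$.
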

\begin{proof}
Let $A',A$ and $A_{F}$ denote the $p$-Sylow subgroup of the ideal class groups of $K',K$ and $F$  respectively. By class field theory we must show that the homomorphism $(N_{F/K'},N_{F/K}) : A_{F} \rightarrow A'\oplus A$ is surjective. However for any pair of fractional ideals $\mathfrak{a}'$ and $\mathfrak{a}$ of $K'$ and $K$ respectively, we have 
$$
(N_{F/K'},N_{F/K}) ((1/2)[\mathfrak{a}'\mathcal{O}_{F} \cdot \mathfrak{a}\mathcal{O}_{F}]) = ([\mathfrak{a}'],[\mathfrak{a}]). 
$$
Here we are using that $N_{F/K'}(\mathfrak{a}\mathcal{O}_{F}) = N_{K/\Q}(\mathfrak{a})\mathcal{O}_{K'}$ is principal, and similarly for $N_{F/K}(\mathfrak{a}'\mathcal{O}_{F})$.
\end{proof}
\begin{corollary}
For an integral ideal $\mathfrak{r}$ of $K$ prime to $p$, the natural map $G(\mathfrak{r}) \rightarrow \Gamma'\times H(\mathfrak{rp}^{\infty})\times \Gamma_{cyc}$ is an isomorphism.
\end{corollary}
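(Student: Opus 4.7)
My plan is to reduce the statement to the isomorphism $G\cong\Gamma\times\Gamma'\times\Gamma_{\cyc}$ already noted in the introduction, which itself follows from the paper's observation that $F_{\infty}=F\cdot K_{\infty}K'_{\infty}\Q_{\cyc}$ together with a standard ramification argument showing these three $\Z_p$-extensions are linearly disjoint over $F$.

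I would first simplify the target via the identity $K(\mathfrak{rp}^{\infty})=K(\mathfrak{r})K_{\infty}$. By class field theory, the pro-$p$ Galois group $\Gal(K(\mathfrak{p}^{\infty})/K)$ decomposes as $\Z_p\times\mathrm{Cl}(K)^{(p)}$, giving $K(\mathfrak{p}^{\infty})=H\cdot K_{\infty}$ where $H$ is the $p$-Hilbert class field of $K$; since $H/K$ is unramified, $H\subset K(\mathfrak{r})$ and the identity follows. Combined with $K(\mathfrak{r})\cap K_{\infty}=K$ (the only finite subextension of the $\Z_p$-extension $K_{\infty}/K$), this yields $H(\mathfrak{rp}^{\infty})\cong H(\mathfrak{r})\times\Gamma$ by standard Galois theory.

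Using the introduction's isomorphism, the claim now reduces to showing that the natural restriction map $G(\mathfrak{r})\to G\times H(\mathfrak{r})$ is an isomorphism. Injectivity is automatic since $F_{\infty}(\mathfrak{r})=F_{\infty}K(\mathfrak{r})$. Surjectivity is equivalent to the equality of orders, which amounts to the condition $K(\mathfrak{r})\cap F_{\infty}=K$.

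The hardest part, I expect, will be verifying $K(\mathfrak{r})\cap F_{\infty}=K$. Since this intersection is contained in both $K(\mathfrak{r})$ (unramified outside primes of $\mathfrak{r}$) and $F_{\infty}$ (unramified outside primes above $p$), and $\mathfrak{r}$ is coprime to $p$, it is unramified everywhere and hence contained in $H$. It therefore suffices to show $FH\cap F_{\infty}=F$, for which I would use an inertia argument: since $p$ splits completely in $F$, there is a prime $\mathfrak{P}$ of $F$ lying above both $\mathfrak{p}$ in $K$ and $\mathfrak{p}'$ in $K'$; each of $FK_{\infty}/F$, $FK'_{\infty}/F$, $F\Q_{\cyc}/F$ is totally ramified at $\mathfrak{P}$, so under the product decomposition $\Gal(F_{\infty}/F)\cong\Gamma\times\Gamma'\times\Gamma_{\cyc}$ the inertia at $\mathfrak{P}$ equals the whole group. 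Thus $F_{\infty}/F$ admits no non-trivial unramified subextension, so $FH\cap F_{\infty}=F$ as required.
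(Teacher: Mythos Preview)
Your inertia argument in the final step is incorrect. Knowing that each of $FK_{\infty}/F$, $FK'_{\infty}/F$ and $F\Q_{\cyc}/F$ is totally ramified at $\mathfrak{P}=\mathfrak{p}_{1}$ only tells you that the inertia subgroup $I_{\mathfrak{P}}\subset G$ surjects onto each factor $\Gamma$, $\Gamma'$, $\Gamma_{\cyc}$; it does not follow that $I_{\mathfrak{P}}=G$. Since $F_{\mathfrak{P}}=\Q_{p}$, local class field theory forces $I_{\mathfrak{P}}$ to be a quotient of $1+p\Z_{p}\cong\Z_{p}$, so $I_{\mathfrak{P}}$ has $\Z_{p}$-rank at most $1$ inside $G\cong\Z_{p}^{3}$. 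The conclusion that $F_{\infty}/F$ has no non-trivial unramified subextension is in fact true, but one must combine the inertia groups at several primes above $p$ to see it.

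There is a related problem earlier: your reduction passes through the splitting $H(\mathfrak{rp}^{\infty})\cong H(\mathfrak{r})\times\Gamma$, i.e.\ $K(\mathfrak{r})\cap K_{\infty}=K$. The justification you give --- that $K$ is ``the only finite subextension'' of the $\Z_{p}$-extension $K_{\infty}/K$ --- is not a valid argument, since $K_{\infty}/K$ has subextensions of every degree $p^{n}$. What is really needed is that $K_{\infty}/K$ be totally ramified at $\mathfrak{p}$, so that it meets the $p$-Hilbert class field $H$ trivially; this can fail when $p\mid h_{K}$, and then both of your intermediate claims $H(\mathfrak{rp}^{\infty})\cong H(\mathfrak{r})\times\Gamma$ and $G(\mathfrak{r})\cong G\times H(\mathfrak{r})$ are false even though the corollary remains true. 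The paper sidesteps this entirely by proving the three-fold disjointness directly: $FK'_{\infty}K(\mathfrak{rp}^{\infty})$ and $F\Q_{\cyc}$ are disjoint over $F$ by ramification at the prime above $\overline{\mathfrak{p}}$ and $\overline{\mathfrak{p}}'$, while $FK'_{\infty}$ and $FK(\mathfrak{rp}^{\infty})$ are disjoint over $F$ by the preceding lemma on Hilbert class fields (any common cyclic degree-$p$ subextension would have to be unramified).
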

\begin{proof} Note that $FK'_{\infty}K(\mathfrak{rp}^{\infty})/F$ and $F\Q_{cyc}/F$ are linearly disjoint, by considering ramification at the prime of $F$ above both $\overline{\mathfrak{p}}$ and $\overline{\mathfrak{p}}'$.
We now argue that $FK'_{\infty}/F$ and $FK(\mathfrak{rp}^{\infty})/F$ are linearly disjoint. If this were not the case, then there would exist a cyclic extension $F'/F$ of degree $p$ contained in both $FK'_{\infty}$ and $FK(\mathfrak{rp}^{\infty})$. Since $F'/F$ is a base change of subextensions of both $K'_{\infty}/K$ and $K(\mathfrak{rp}^{\infty})/K$, it follows that  $F'/F$ must be unramified, which contradicts the previous lemma. 
\end{proof}

\begin{definition} For an integral ideal $\mathfrak{r}\in \mathcal{R}(\mathfrak{f}cpN')$, we define
$$
{}_{c}\mathcal{BF}(\mathfrak{r},\psi,\psi',\omega^{i}) \in H^{1}_{S}(F, \Lambda_{\mathcal{O}}^{\#}(G(\mathfrak{r}))(\psi_{p}\psi'_{p}\omega^{i}))
$$
as the image of ${}_{c}\mathcal{BF}(\mathfrak{rf},\psi,N',\omega^{i})$ under \eqref{iso'} and corestriction. We also write ${}_{c}\mathcal{BF}(\psi,\psi',\omega^{i}) \in H^{1}_{S}(F, \Lambda_{\mathcal{O}}^{\#}(G)(\psi_{p}\psi'_{p}\omega^{i}))$ as the image of ${}_{c}\mathcal{BF}((1),\psi,\psi',\omega^{i})$ under corestriction.
\end{definition}

\subsubsection{Removing c}\label{removec}
Note that by \eqref{twista} and \eqref{cdsymm}, for integers $c,d$ coprime to $6NN'p$ and for an integral ideal $\mathfrak{r}\in \mathcal{R}(\mathfrak{f}pcdN')$ we have 
\begin{equation*}
\left[ d^2 - \varepsilon_{\psi}\varepsilon_{\psi'}\omega^{-2i}(d)\tau_{d} \right] {}_{c}\mathcal{BF}(\mathfrak{r},\psi ,\psi',\omega^{i}) = \left[  c^2 - \varepsilon_{\psi}\varepsilon_{\psi'}\omega^{-2i}(c)\tau_{c} \right] {}_{d}\mathcal{BF}(\mathfrak{r},\psi ,\psi',\omega^{i}),
\end{equation*}
where $\tau_{c}\in G(\mathfrak{r})$ is such that 
\begin{equation*}
\tau_{c\mid K(\mathfrak{rp}^{\infty})}=(c, K(\mathfrak{rp}^{\infty})/K)^{-1} \qquad \tau_{c\mid K_{\infty}'}=(c, K_{\infty}'/K')^{-1} 
\end{equation*}
$$
\tau_{c\mid \Q_{\cyc}}=(c,\Q_{\cyc}/\Q)^{2},
$$ 
and similarly for $\tau_{d}\in G(\mathfrak{r})$. We define 
\begin{equation}\label{deltac}
\delta_{c}= c^2 - \varepsilon_{\psi}\varepsilon_{\psi'}\omega^{-2i}(c)\tau_{c} \in \Lambda_{\mathcal{O}}(G(\mathfrak{r}))
\end{equation}
and $\delta_{d}$ similarly. 
If we assume the following condition 
\begin{equation}\label{hypc}
\varepsilon_{\psi}\varepsilon_{\psi'} \neq \omega^{2+2i} \text{ mod }\mathfrak{m}
\end{equation}
then may choose $d\geq 1$ coprime $6\N_{K/\Q}(\mathfrak{f})\N_{K'/\Q}(\mathfrak{f}')DD'p$  such that $\delta_{d}$ is invertible. Assuming \eqref{hypc}, for $\mathfrak{r}\in \mathcal{R}(\mathfrak{f}pdN')$ we define 
\begin{equation}\label{BFwoc}
\mathcal{BF}(\mathfrak{r},\psi ,\psi',\omega^{i}) = \delta_{d}^{-1} {}_{d}\mathcal{BF}(\mathfrak{r},\psi ,\psi',\omega^{i})
\end{equation}
where $d$ is chosen as above and is coprime to $\mathfrak{r}$. This is independent of choice of $d$, and can therefore be defined for all $\mathfrak{r}\in\mathcal{R}(\mathfrak{f}pN')$.

\subsection{Local properties}
In this subsection we state the local properties of the classes ${}_{c}\mathcal{BF}(\mathfrak{r},\psi,\psi',\omega^{i})$ at the primes of $F$ lying above $\overline{\mathfrak{p}}$. This extra information allows us to obtain stronger results when we apply the Euler system machinery of Rubin in Theorem \ref{ESM}. 
\begin{theorem}\label{lp} Let $\nu$ denote the prime of $F$ lying above both $\overline{\mathfrak{p}}$ and $\overline{\mathfrak{p}}'$. For all ideals $\mathfrak{r}\in \mathcal{R}(\mathfrak{f}cpN')$ we have  $\loc_{\nu}({}_{c}\mathcal{BF}(\mathfrak{r},\psi,\psi',\omega^{i})) = 0$.
\end{theorem}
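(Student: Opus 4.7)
The strategy is to reduce the vanishing at $\nu$ to the analogous vanishing at $\overline{\mathfrak{p}}$ for an asymmetric Beilinson--Flach class over $K$, which is established in \cite{LLZ2}.

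By construction, $_{c}\mathcal{BF}(\mathfrak{r},\psi,\psi',\omega^{i})$ is obtained from $_{c}\mathcal{BF}(\mathfrak{rf},\psi,N',\omega^{i})$ through two successive applications of Shapiro's lemma. First, $\nu_{\mathfrak{rf}}$ from the Corollary together with Shapiro for $K/\Q$ produces an intermediate class
\[
{}_{c}\mathcal{BF}^{K}\in H^{1}_{S}(K,\,\Lambda_{\mathcal{O}}^{\#}(H(\mathfrak{rfp}^{\infty}))(\psi_{p})\widehat{\otimes}H^{1}_{\ord}(N'p^{\infty})\widehat{\otimes}\Lambda^{\#}(\Gamma_{\cyc})(\omega^{i})).
\]
Second, one applies \eqref{iso'} for $\psi'$, invokes Mackey's identification $\Res^{\Q}_{K}\Ind^{\Q}_{K'}V\cong\Ind^{K}_{F}V$ (valid because $K\cap K'=\Q$, so $G_{K}\cap G_{K'}=G_{F}$), and uses Shapiro for $F/K$ together with a corestriction to reach the class over $F$.

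Since $\nu$ lies above $\overline{\mathfrak{p}}$, the second step of this chain identifies the localization map $\loc_{\nu}$ on $H^{1}(F,\cdot)$ with $\loc_{\overline{\mathfrak{p}}}$ on $H^{1}(K,\cdot)$ composed with projection onto the summand indexed by $\nu$ in the local Mackey decomposition $\Res^{K}_{D_{\overline{\mathfrak{p}}}}\Ind^{K}_{F}V\cong V_{\nu}\oplus V_{\nu'}$ (the two primes arising since $\overline{\mathfrak{p}}$ splits in $F/K$, as $p$ splits completely in $F$). Consequently the problem reduces to proving $\loc_{\overline{\mathfrak{p}}}({}_{c}\mathcal{BF}^{K})=0$.

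This last statement is precisely the local vanishing for the asymmetric Beilinson--Flach class over $K$ proved in \cite{LLZ2}, which I would cite rather than reprove. The underlying mechanism is as follows: $_{c}\mathcal{BF}(N,N',\omega^{i})$ arises from the \'etale regulator applied to a motivic class on $Y_{1}(N)\times Y_{1}(N')$ followed by $e'_{\ord}$ on the first tensor factor; this ordinary projection constrains the localization at $p$ to lie inside a canonical sub of the two-step local ordinary filtration on $H^{1}_{\ord}(Np^{\infty})$. Under $\nu_{\mathfrak{rf}}$ and the Mackey decomposition of $\Ind^{\Q}_{K}\Lambda_{\mathcal{O}}^{\#}(\psi_{p})$ at $p$, this sub matches the summand indexed by the prime $\mathfrak{p}$ distinguished by $\iota_{p}$, so the complementary $\overline{\mathfrak{p}}$-summand receives no contribution. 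The main obstacle in a fully self-contained write-up is this final matching, which rests on the explicit form of $\phi_{\mathfrak{n}}$ together with Hida's control theorem in order to identify the ordinary sub of $H^{1}_{\ord}(Np^{\infty})$ at $p$ with the $\mathfrak{p}$-component of the induced CM representation at each finite level.
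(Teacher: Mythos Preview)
There is a genuine gap in your argument. You claim that the intermediate class ${}_c\mathcal{BF}^K$ over $K$ satisfies $\loc_{\overline{\mathfrak{p}}}({}_c\mathcal{BF}^K)=0$, citing \cite{LLZ2}. But this vanishing is false, and no such statement appears in \cite{LLZ2}. If it held, then by the very Shapiro/Mackey compatibility you invoke, the final class over $F$ would vanish at \emph{both} primes of $F$ lying above $\overline{\mathfrak{p}}$, namely at $\nu=\mathfrak{p}_4$ and also at $\mathfrak{p}_3$ (the prime above $\overline{\mathfrak{p}}$ and $\mathfrak{p}'$). This directly contradicts the explicit reciprocity law stated immediately after the theorem, which sends $\loc_{\mathfrak{p}_3}({}_c\mathcal{BF}(\psi,\psi',\omega^i))$ to the non-zero element $\delta_c L_{p,i}^{\psi,\psi'}$.

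The flaw in your proposed mechanism is the assertion that the ordinary filtration on the \emph{first} factor alone forces the $\overline{\mathfrak{p}}$-component to vanish. The local condition satisfied by the Beilinson--Flach class at $p$ does not cut down to the rank-one sub $\mathcal{F}^+H^1_{\ord}(Np^\infty)\otimes H^1_{\ord}(N'p^\infty)$; it genuinely involves the ordinary filtrations of both Hida families. What is special about $\nu=\mathfrak{p}_4$ is that it lies above $\overline{\mathfrak{p}}$ \emph{and} $\overline{\mathfrak{p}}'$ simultaneously: there both $\psi_p$ and $\psi'_p$ have Hodge--Tate weight $0$, so the local coefficient representation has $\D_{dR}/\D^+_{dR}=0$. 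The paper's proof exploits exactly this, invoking Nekov\'a\v{r}--Nizio{\l} \cite{NN} to place the class in $H^1_g$ at each finite level, computing $H^1_g=0$ from the Bloch--Kato dimension formula, and then handling the integral discrepancy by an inverse-limit argument. A reduction to a result over $K$ using only the CM structure of $\psi$ cannot succeed; the CM structure of $\psi'$ is essential.
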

\begin{proof}
We have an isomorphism
$$
 H^{1}(F_{\nu},\Lambda_{\mathcal{O}}^{\#}(G(\mathfrak{r}))(\psi_{p}\psi_{p}'\omega^{i})) \cong \varprojlim_{M} H^{1}(F_{\nu},\mathcal{O}^{\#}[\Gal(M/F)](\psi_{p}\psi_{p}'\omega^{i}))
$$
where $M$ runs over the finite extensions of $F$ contained in $F_{\infty}(\mathfrak{r})$. For each such $M$, let $\pi_{M}$ denote the projection map 
$$
\pi_{M}:  H^{1}(F_{\nu},\Lambda_{\mathcal{O}}^{\#}(G(\mathfrak{r}))(\psi_{p}\psi_{p}'\omega^{i})) \rightarrow H^{1}(F_{\nu},\mathcal{O}^{\#}[\Gal(M/F)](\psi_{p}\psi_{p}'\omega^{i})).
$$

It suffices to show that $\pi_{M}(\loc_{\nu}{}_{c}\mathcal{BF}(\mathfrak{r},\psi,\psi',\omega^{i}))=0$ for all $M$. 
For some integer $k$ sufficiently large, the class  $\pi_{M}({}_{c}\mathcal{BF}(\mathfrak{r},\psi,\psi',\omega^{i}))$ is the image of the motivic cohomology class ${}_{c}\Xi(p^{k}, \N_{K/\Q}(\mathfrak{fr})Dp^{k},N'p^{k})$ under the \'{e}tale regulator map and after twisting.  
It follows by a result of Nekovar and Niziol \cite[Theorem B]{NN} that  $\pi_{M}(\loc_{\nu}{}_{c}\mathcal{BF}(\mathfrak{r},\psi,\psi',\omega^{i}))$ lies in the subspace $H_{g}^{1}(F_{\nu},\mathcal{O}^{\#}[\Gal(M/F)](\psi_{p}\psi_{p}'\omega^{i}))$ as defined Bloch and Kato in \cite[(3.7.2)]{BK}.
However, under our assumptions on $\psi$ and $\psi'$, we have  
$$
H_{g}^{1}(F_{\nu},L^{\#}[\Gal(M/F)](\psi_{p}\psi_{p}'\omega^{i})) = 0.
$$
This can be seen from the dimension formula  
$$  
\dim H^{1}_{g}(G_{K}, V ) = \dim \D_{dR}(V )/\D^{+}_{dR}(V ) + \dim H^{0}
(G_{K}, V ) + \dim \D_{crys}(V^{*}(1))^{\phi =1}
$$
for a de Rham representation $V$.

By the long exact sequence of Galois cohomology, we see that  $\loc_{\nu}{}_{c}\mathcal{BF}(\mathfrak{r},\psi,\psi',\omega^{i})$ lies in the image of $\varprojlim H^{0}(F_{\nu}, \mathbb{D}^{\#}[\Gal(M/F)](\psi_{p}\psi_{p}'\omega^{i}))$, where $\mathbb{D}=L/\mathcal{O}$. We claim this module is zero. Indeed, by Shapiro's lemma 
$$
\varprojlim_{M} H^{0}(F_{\nu}, \mathbb{D}^{\#}[Gal(M/F)](\psi_{p}\psi_{p}'\omega^{i})) = \varprojlim_{M} \prod_{w \mid \nu} H^{0}(M_{w},W)
$$
where $W=\mathbb{D}(\psi_{p}\psi_{p}'\omega^{i})$ and $w$ runs over the primes of $M$ above $\nu$. This is zero when $i\not\equiv 0$ modulo $p-1$ by considering the action of inertia, so we may assume $i=0$. For each finite extension $M/F$, and place $w \mid \nu$, W is unramified at $w$
with $\Frob_{w}$ acting as multiplication by $\psi(\overline{\mathfrak{p}})^{f}\psi(\overline{\mathfrak{p}}')^{f}$ where $f=f(\omega/\nu)$ is 
the residue degree of $\omega$ over $\nu$. We let $M_{\omega,n}$ denote the degree $p^{n}$ extension in the 
cyclotomic $\Zp$-extension of $M_{\omega}$, and note that there exists $n_{0}\geq 0$ such that $M_{\omega,n}/M_{\omega,n_{0}}$ is totally ramified for all $n\geq n_{0}$. In particular  $W^{G_{M_{\omega,n}}}=W^{G_{M_{\omega,n_{0}}}}$ is a finite module and the transition map becomes multiplication by $p^{n-n_{0}}$. The result follows. 
\end{proof}

\subsubsection{Explicit reciprocity law}
The explicit reciprocity law relates the Euler system classes constructed above to a three variable $p$-adic $L$-function constructed by Hida. We first introduce the complex $L$-function whose values will be interpolated. For $\Theta$ an algebraic Hecke character of $F$ we define the complex $L$-function
$$
L(\Theta_{\infty},s) = \sum_{\mathfrak{a}} \Theta_{\infty}(\mathfrak{a}) \N(\mathfrak{a})^{-s} \qquad Re(s)>>0,
$$
where the sum runs over all integral ideals of $F$. We allow the possibility that $\Theta$ is imprimitive.

We let $I(\psi) \subseteq \mathrm{Frac}(\Lambda(\Gamma))$ denote the congruence ideal (see \cite[Section 7]{Hida15}) attached to the lambda-adic form $ \sum_{(\mathfrak{a},\mathfrak{fp})=1}\psi(\mathfrak{a})\sigma_{\mathfrak{a}}^{-1}q^{\N_{K/\Q}(\mathfrak{a})}\in \Lambda_{\mathcal{O}}(\Gamma)[[q]]$. We now describe Hida's three variable $p$-adic $L$-function attached to the lambda-adic CM forms through $\psi$ and $\psi'$. For each residue class $i$ modulo $p-1$, there exists a $p$-adic $L$-function $L_{p,i}^{\psi,\psi'} \in I(\psi)^{-1} \widehat{\otimes} \Lambda_{\mathcal{O}}(\Gamma') \widehat{\otimes} \Lambda_{\mathcal{O}}(\Gamma_{cyc})$ with the interpolation property given below. 
 
 Before stating the interpolation property it will be helpful to introduce some notation.  Recall that $\mathfrak{p}$ and $\mathfrak{p}'$ denote the primes of $K$ and $K'$ respectively corresponding to $\iota_{p}$. The primes $\mathfrak{p}_{1},\mathfrak{p}_{2},\mathfrak{p}_{3},\mathfrak{p}_{4}$ of $F$ lying above $p$ are determined by the splittings $\mathfrak{p}\mathcal{O}_{F} = \mathfrak{p}_{1}\mathfrak{p}_{2}$, and $\mathfrak{p'}\mathcal{O}_{F}= \mathfrak{p}_{1}\mathfrak{p}_{3}$. 
We suppose that $\rho$ and $\rho'$ are algebraic Hecke characters of $K$ and $K'$  of infinity types $(k,0)$ and $(k',0)$ respectively with $k,k'\geq 0$, such that the associated Galois representations $\rho_{p}$, $\rho'_{p}$ factor through $\Gamma$ and $\Gamma'$ respectively, and such that the conductors of $\psi'\rho'$ and $\psi'\rho'$ are prime to $p$. Assume also that $j\equiv -i$ modulo $p-1$  is an integer in the range 
$
k'< 1+j \leq k,
$
and let $\chi_{cyc}: \Gamma_{\cyc}\rightarrow 1+p\Zp$ denote the cyclotomic character restricted to $\Gamma_{cyc}$. 
To simplify notation we define $\chi:= \psi\rho$ and $\chi':=\psi'\rho'$, considered as Hecke characters with modulus $\mathfrak{f}$ and $\mathfrak{f}'$ respectively, and let $\Theta:=(\chi\circ \N_{F/K})\cdot (\chi'\circ \N_{F/K'})$.
Then we have 
$$
\int \left(\rho\rho'\chi_{\cyc}^{-j}\right)^{-1} d L_{p,i}^{\psi,\psi'}  = \frac{\mathcal{E}(\Theta,1+j)}{\mathcal{E}(\chi)\mathcal{E}^{*}(\chi)}\cdot \frac{j!(j-k')!L(\Theta_{\infty},1+j)}{\sqrt{-1}^{k'-k}\pi^{2+2j-k'} 2^{2+2j+k-k'} \left<\theta_{\chi},\theta_{\chi} \right>_{N}}.
$$
Here the Euler factors are defined by 
$$
\mathcal{E}(\chi) = \left( 1-\frac{\chi(\mathfrak{p})}{p\chi(\overline{\mathfrak{p}})} \right) \qquad \mathcal{E}^{*}(\chi) = \left( 1-\frac{\chi(\mathfrak{p})}{\chi(\overline{\mathfrak{p}})} \right),
$$
and
\begin{multline*}
\mathcal{E}(\Theta,1+j) = \left(1-\frac{\Theta(\mathfrak{p}_{1}) }{p^{j+1}}\right)\left(1-\frac{\Theta(\mathfrak{p}_{2})}{p^{j+1}}\right)\left(1-\frac{p^{j}}{\Theta(\mathfrak{p}_{3})}\right)\left(1-\frac{p^{j}}{\Theta(\mathfrak{p}_{4})}\right)\\ \times \prod_{\ell\mid (D,D')}\left(1- \frac{\chi(\mathfrak{l})\chi'(\mathfrak{l}')\varepsilon(\ell)}{\ell^{j+1}}\right).
\end{multline*}
In the final term  $\mathfrak{l}$ and $\mathfrak{l}'$ denote the primes above $\ell$ which ramifies in both $K$ and $K'$, and $\varepsilon$ is the quadratic character associated to the real quadratic subfield of $F$.  The cusp form $\theta_{\chi}$ is given by \eqref{thetapsi},  $N=\N_{K/\mathbb{Q}}(\mathfrak{f})D$  is its level and $\left<\theta_{\chi},\theta_{\chi} \right>_{N}$ is its Petersson norm.

We remark that this $p$-adic $L$-function is non-zero. Indeed it interpolates values which can be shown to be non-zero by appealing to the Euler product of the complex $L$-function in the region of absolute convergence. 

In the following theorem $H=G/\Gamma=\Gal(FK_{\infty}\Q_{cyc}/F)\cong \Z_{p}^{2}$. 
\begin{theorem}
Let $\nu$ denote the prime of $F$ lying above $\overline{\mathfrak{p}}$ and $\mathfrak{p}'$. We have a homomorphism of $\Lambda_{\mathcal{O}}(G)$-modules
$$
\mathcal{L}: H^{1}(F_{\nu}, \Lambda^{\#}_{\mathcal{O}}(G)(\vartheta\omega^{i})) \rightarrow I(\psi)^{-1}  \widehat{\otimes} \Lambda_{\mathcal{O}}(\Gamma') \widehat{\otimes} \Lambda_{\mathcal{O}}(\Gamma_{\cyc})
$$
such that the image of $\loc_{\nu}({}_{c}\mathcal{BF}(\psi,\psi',\omega^{i}))$ is given by $\delta_{c}L_{p,i}^{\psi, \psi'}$, where $\delta_{c}$ is given by \eqref{deltac}. Furthermore for each continuous character $\rho: \Gamma'\rightarrow \mathcal{O}^{\times}$, if we let ${}_{c}z(\psi,\psi',\omega^{i},\rho)\in  H^{1}(F_{\nu}, \Lambda^{\#}_{\mathcal{O}}(H)(\vartheta\rho\omega^{i})) $ be the image of ${}_{c}\mathcal{BF}(\psi,\psi',\omega^{i})^{(\rho)}$ under corestriction, then we have a $\Lambda_{\mathcal{O}}(H)$-module homomorphism 
$$
\mathcal{L}: H^{1}(F_{\nu}, \Lambda^{\#}_{\mathcal{O}}(H)(\vartheta\rho\omega^{i})) \rightarrow  I(\psi)^{-1} \widehat{\otimes} \Lambda_{\mathcal{O}}(\Gamma_{\cyc})
$$
such that $\mathcal{L}(\loc_{\nu} ({}_{c}z(\psi,\psi',\omega^{i},\rho))=\pi( \Tw_{\rho^{-1}}(\delta_{c}L_{p,i}^{\psi,\psi'}))$. Here $\pi$ is the projection $ I(\psi)^{-1}\widehat{\otimes} \Lambda_{\mathcal{O}}(\Gamma')   \widehat{\otimes} \Lambda_{\mathcal{O}}(\Gamma_{cyc})\rightarrow   I(\psi)^{-1} \widehat{\otimes} \Lambda_{\mathcal{O}}(\Gamma_{cyc})$.
\end{theorem}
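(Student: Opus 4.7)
The plan is to derive both parts of the theorem from the three-variable explicit reciprocity law of Kings--Loeffler--Zerbes [KLZ2, Theorem B], applied at the level of Hida families to ${}_{c}\mathcal{BF}(N,N',\omega^{i})$, and then to push the resulting identity through the CM specialization $\phi_{\mathfrak{n}} \otimes \phi'$ of Section \ref{FCMMFs}. Since ${}_{c}\mathcal{BF}(\psi,\psi',\omega^{i})$ is by construction the image of ${}_{c}\mathcal{BF}(N,N',\omega^{i})$ under this specialization followed by Shapiro's lemma and corestriction, any identity at the Hida-family level descends to the CM setting.

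To construct $\mathcal{L}$, I would use the fact that at the prime $\nu$ the character $\psi_{p}$ is unramified (since $\nu \mid \overline{\mathfrak{p}}$ and $\psi$ has conductor coprime to $\overline{\mathfrak{p}}$), while $\psi'_{p}$ factors through $\Gamma'$. Thus $\Lambda^{\#}_{\mathcal{O}}(G)(\vartheta\omega^{i})|_{G_{F_{\nu}}}$ is a twist of an unramified rank-one $\Lambda_{\mathcal{O}}(\Gamma)$-module by a character factoring through $\Gamma'\times \Gamma_{\cyc}$. The map $\mathcal{L}$ is then defined as the specialization under $\phi_{\mathfrak{n}}\otimes \phi'$ of the three-variable Perrin-Riou/Coleman big logarithm constructed in [KLZ2, \S 8] for the local Hida-family Galois representation. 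The denominator $I(\psi)^{-1}$ in the target appears because the ring homomorphism $\phi_{\mathfrak{n}}$ need not extend across the whole Hida-algebra locus, and inverting Hida's congruence ideal renders the construction regular.

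For the identity $\mathcal{L}(\loc_{\nu}({}_{c}\mathcal{BF}(\psi,\psi',\omega^{i}))) = \delta_{c} L_{p,i}^{\psi,\psi'}$, I would argue by density of classical specializations. Applied to ${}_{c}\mathcal{BF}(N,N',\omega^{i})$ before CM specialization, [KLZ2, Theorem B] expresses the regulator image as an explicit Rankin--Selberg $p$-adic $L$-function over the Hida algebra. Specializing via $\phi_{\mathfrak{n}}\otimes \phi'$ and evaluating at arithmetic characters $(\rho,\rho',\chi_{\cyc}^{-j})$, both the specialized output and Hida's $L_{p,i}^{\psi,\psi'}$ compute (up to the $\delta_{c}$ factor) the same Rankin--Selberg $L$-value $L(\Theta_{\infty},1+j)$, the same Euler factor $\mathcal{E}(\Theta,1+j)/(\mathcal{E}(\chi)\mathcal{E}^{*}(\chi))$, and the same period ratio involving $j!(j-k')!/\langle\theta_{\chi},\theta_{\chi}\rangle_{N}$, by the classical Rankin--Selberg integral formula applied to $\theta_{\chi}$ and $\theta_{\chi'}$. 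Since these arithmetic points are Zariski-dense in $I(\psi)^{-1}\widehat{\otimes}\Lambda_{\mathcal{O}}(\Gamma')\widehat{\otimes}\Lambda_{\mathcal{O}}(\Gamma_{\cyc})$, the equality of the two elements follows. The prefactor $\delta_{c}$ appears because the $c,d$-symmetry \eqref{cdsymm} is built into the normalization of Hida's $p$-adic $L$-function but not into the $c$-variant Euler system classes themselves.

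For the twisted statement I would invoke two functoriality properties: $\mathcal{L}\circ \alpha_{\rho} = \Tw_{\rho^{-1}}\circ \mathcal{L}$, immediate from the cocycle formula for $\alpha_{\rho}$ recorded below \eqref{twista}, and compatibility of corestriction with the projection $\pi$, immediate from the naturality of the Coleman/Perrin-Riou construction under change of coefficients. Combining these with the first part gives $\mathcal{L}(\loc_{\nu}({}_{c}z(\psi,\psi',\omega^{i},\rho))) = \pi(\Tw_{\rho^{-1}}(\delta_{c}L_{p,i}^{\psi,\psi'}))$. The main obstacle is the density step of the main identity: reconciling the normalizations of Coleman maps, the choices of periods (Petersson norms against CM periods), and the Euler factors at the primes dividing $(D,D')$ between the KLZ regulator framework and Hida's construction of $L_{p,i}^{\psi,\psi'}$ is where the real work lies, since any mismatch would manifest as an extra algebraic factor that must either be absorbed into $\mathcal{L}$ or appear in the final formula.
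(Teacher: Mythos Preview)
Your proposal is correct and matches the paper's approach. The paper's proof consists of a direct citation: the first statement is the explicit reciprocity law of \cite[Theorem 10.2.2]{KLZ2} (which is Theorem B of that paper), and the second follows from the same construction together with the functoriality of the regulator map in \cite[Theorem 8.2.3]{KLZ2}. Your sketch unpacks exactly these citations --- the density argument and the normalization issues you flag are handled inside \cite{KLZ2} itself, which is why the paper does not reproduce them.
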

\begin{proof} The first statement is the explicit reciprocity law of  \cite[Theorem 10.2.2]{KLZ2} and the second statement follows the same construction and the functoriality of the map in Theorem 8.2.3 \textit{op. cit.} 
\end{proof}

\section{Application to Selmer groups}\label{secsel}
We use the same notation as in the previous section and introduce the following notation. We let $K_{\infty,\infty}$ denote the unique $\Z_{p}^{2}$-extension of $K$, and let $N_{\infty}=FK_{\infty,\infty}$ and put $H=\Gal(K_{\infty,\infty}/K)\cong \Gal(N_{\infty}/F)$.

\subsection{Selmer groups}

Recall that $L$ be a finite extension of $\Qp$, $\mathcal{O}$ its ring of integers and $\mathfrak{m}$  is the maximal ideal of $\mathcal{O}$. We fix a continuous character $\theta :G_{F} \rightarrow \mathcal{O}^{\times}$, and we will use the notation  $T=\mathcal{O}(\theta)$, $V=T\otimes _{\mathcal{O}}L = L(\theta)$ and $W=V/T=\mathbb{D}(\theta)$.

We let $\Sigma(F)$ denote the places of $F$, and let $\Sigma_{p}(F)\subset \Sigma(F)$ denote the primes lying above $p$. We also suppose that $\Sigma_{p}(F)= \Sigma \cup \Sigma^{\prime}$ is a disjoint union of $\Sigma_{p}(F)$.   

For $v \in \Sigma(F)\backslash \Sigma_{p}(F)$, we define $H^{1}_{f}(F_{v}, V)=H^{1}_{\ur}(F_{v}, V) := \ker(H^{1}(F_{v},V) \rightarrow H^{1}(F^{\ur}_{v},V))$, where $F^{ur}$ is the maximal unramified extension of $F$. Furthermore we define $H^{1}_{f}(F_{v},T)$ and $H^{1}_{f}(F_{v},W)$ as the preimage and image of $H^{1}_{f}(F_{v}, V)$ under the natural maps arising from the short exact sequence 
$0\rightarrow T \rightarrow V \rightarrow W\rightarrow 0$. 
We then define $H^{1}_{s}(F_{v},W) = H^{1}(F_{v},W)/H^{1}_{f}(F_{v},W)$.

For a field $F \subset N \subset \overline{\Q}$ and $v\in \Sigma(F)$ we define 
$$
J_{v}(N) := \begin{cases}\varinjlim_{F \subset_{f} M \subset N} \bigoplus_{w\mid v} H^{1}_{s}(M_{w} , W) &\text{ if } v\notin \Sigma_{p}(F) \\
\varinjlim_{F \subset_{f} M \subset N} \bigoplus_{w\mid v} H^{1}(M_{\omega} , W)  &\text{ if } v\in \Sigma_{p}(F),
\end{cases}
$$
where the limit is taken over all finite extensions $M$ of $F$ contained inside $N$, and the sum over all the places of $M$ above $v$. Then the Selmer group of $W$ over $N$ with strict conditions at $\Sigma$ and relaxed conditions at $\Sigma'$ is defined by the exact sequence
$$
0\rightarrow \Sel^{\Sigma'}_{\Sigma }(N,W) \rightarrow H^{1}(N,W) \rightarrow \bigoplus_{v\in \Sigma(F)\backslash \Sigma'} J_{v}(N).
$$
The following two examples serve as motivation for our paper.  
\begin{example}\label{mainex} Suppose that $\theta : G_{F}\rightarrow \mathcal{O}^{\times}$ is a continuous finite order character of order prime to $p$ and suppose $\Sigma$ and $\Sigma'$ are as above. Let $F'=\mathbb{Q}^{\ker(\theta)}$, $\Delta=\Gal(F'/F)$ and define $M^{\Sigma'}_{\Sigma}(F')$ to be the maximal abelian $p$-extension of $F'$ which is unramified outside the primes above $\Sigma'$ and for which the primes above $\Sigma$ split completely. Note that $M^{\Sigma'}_{\Sigma}(F')$ has a natural $\Delta$ action given by conjugation. We also define $X^{\Sigma'}_{\Sigma}(F')=\Gal(M^{\Sigma'}_{\Sigma}(F')/F')$. Then restriction map 
$$
H^{1}(F,\mathbb{D}(\theta)) \rightarrow H^{1}(F',\mathbb{D}(\theta))^{\Delta} = \Hom(G_{F'},\mathbb{D})^{\theta^{-1}}
$$
is an isomorphism, since $p\nmid [F':F]$, and it identifies $\Sel^{\Sigma'}_{\Sigma}(F,\mathbb{D}(\theta))$ with $\Hom(X^{\Sigma'}_{\Sigma}(F'),\mathbb{D})^{\theta^{-1}}$. Applying $\Hom_{\mathcal{O}}(\cdot,\mathbb{D})$ we see that 
$$
\Hom_{\mathcal{O}}(\Sel^{\Sigma'}_{\Sigma}(F,\mathbb{D}(\theta)),\mathbb{D}) \cong \left(X^{\Sigma'}_{\Sigma}(F') \otimes_{\Zp} \mathcal{O} \right)^{\theta}. 
$$

Similarly we have an isomorphism of $\Lambda_{\mathcal{O}}(G)$-modules  
$$
\Hom_{\mathcal{O}}(\Sel^{\Sigma'}_{\Sigma}(F_{\infty},\mathbb{D}(\theta)),\mathbb{D}) \cong \left(X^{\Sigma'}_{\Sigma}(F'_{\infty}) \otimes_{\Zp} \mathcal{O} \right)^{\theta}. 
$$
where $F'_{\infty}$ is the composite of $F'$ and $F_{\infty}$ and $X^{\Sigma'}_{\Sigma}(F'_{\infty})$ is defined as above. 
\end{example}
\begin{remark}
For a prime $w$ of $F'$ we let $U_{1}(F'_{w})$ denote the units in the completion $\mathcal{O}_{F',w}$ which are congruent to $1$ modulo $w$, and we put $U_{1}(F'_{\Sigma'}) = \prod_{w\mid v\in \Sigma'} U_{1}(F'_{w})$. We also let $U(F')^{\Sigma'}_{\Sigma}$ denote the $\Sigma$-units of $F'$, which are congruent to $1$ modulo all the primes of $F'$ lying above $\Sigma'$. Then by class field theory and the finiteness of the class group, $\Sel^{\Sigma'}_{\Sigma}(F,\mathbb{D}(\theta))$ is finite if and only if the the homomorphism
$$
\left(U(F')^{\Sigma'}_{\Sigma} \otimes_{\Z} \mathcal{O}\right)^{\theta} \rightarrow \left( U_{1}(F'_{\Sigma'}) \otimes_{\Zp} \mathcal{O} \right)^{\theta}
$$
has finite cokernel. 
\end{remark}
 
\begin{example}
With the same notation as the previous example, we describe the Selmer group attached to 
$\mathcal{O}(\theta^{-1})(1)$. 

Let $Y(F')^{\Sigma'}_{\Sigma} \subset F'^{\times}\otimes \Qp/\Zp$ consist of the elements of the form $y\otimes p^{-n}$ with the local conditions
$
\ord_{w} (y) \equiv 0
$ modulo $p^{n}$ for all places $w$ not lying above $p$, and $y\in F_{w}^{\prime\times p^{n}}$ for all places $w$ lying above places in $\Sigma$. For an element $y\in F^{\prime\times}$ we define $(y)_{\Sigma'}$ to be the fractional ideal 
$$
(y)_{\Sigma'} = \prod _{\mathfrak{q}\nmid \mathfrak{p}\in \Sigma'} \mathfrak{q}^{\ord_{\mathfrak{q}}(y)}.
$$ 
Letting $A(F')$ denote the $p$-primary subgroup of the ideal class group of $F'$, we have a surjective homomorphism, $Y(F')^{\Sigma'}_{\Sigma} \rightarrow A(F')$ given by $(y\otimes 1/p^{n}) \mapsto [(y)_{\Sigma'}^{1/p^{n}}]$. We therefore have an exact sequence 
$$
0\rightarrow Z^{\Sigma'}_{\Sigma}(F') \rightarrow Y^{\Sigma'}_{\Sigma}(F') \rightarrow A(F')\rightarrow 0
$$
where 
$$
Z^{\Sigma'}_{\Sigma}(F') = \ker \left( \mathcal{O}_{F'}[1/\Sigma']^{\times}\otimes \Qp/\Zp \rightarrow \prod _{w\mid v\in \Sigma} \mathcal{O}_{F',w}^{\times} \otimes \Qp /\Zp \right)
$$
The restriction map together with the Kummer map give rise
to an isomorphism
$
H^{1}(F,\mathbb{D}(\theta^{-1})(1)) \cong \left(F'^{\times} \otimes \mathbb{D}\right)^{\theta}
$ which identifies $\Sel^{\Sigma'}_{\Sigma}(F,\mathbb{D}(\theta^{-1})(1))$ with $\left(Y^{\Sigma'}_{\Sigma}(F')\otimes_{\Zp} \mathcal{O}\right)^{\theta}$. It follows that  $\Sel^{\Sigma'}_{\Sigma}(F,\mathbb{D}(\theta^{-1})(1))$ is finite if and only if $\left(Z^{\Sigma'}_{\Sigma}(F')\otimes_{\Zp} \mathcal{O}\right)^{\theta}$ is finite. In particular, if any of the primes belonging to $\Sigma'$ split completely in $F'/F$, then $\Sel^{\Sigma'}_{\Sigma}(F,\mathbb{D}(\theta^{-1})(1))$ is of infinite order. 
\end{example}

We now compare the Selmer group defined over $N_{\infty}:=FK_{\infty,\infty}$ with the $\Gamma'$-invariants of the Selmer group defined over $F_{\infty}$. We thus consider the following commutative diagram with exact rows,
$$
\begin{CD}
0 @>>>\Sel^{\Sigma'}_{\Sigma}(N_{\infty},W) @>>> H^{1}(N_{\infty},W) @>>> \bigoplus_{v\in \Sigma(F)\backslash \Sigma'} J_{v}(N_{\infty}) \\
@. @V\alpha VV @V\beta VV @V\gamma VV \\
0 @>>>\Sel^{\Sigma'}_{\Sigma}(F_{\infty},W)^{\Gamma'} @>>> H^{1}(F_{\infty},W)^{\Gamma'} @>>> \bigoplus_{v\in \Sigma(F)\backslash \Sigma'} J_{v}(F_{\infty})^{\Gamma'},
\end{CD}
$$
where the vertical maps are the respective restriction maps.
\begin{lemma} Suppose that the residual character $\overline{\theta}: G_{F}\rightarrow \left(\mathcal{O}/\mathfrak{m}\right)^{\times}$ is not trivial. Then $\beta$ is an isomorphism. 
\end{lemma}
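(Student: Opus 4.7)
\medskip

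\noindent\textbf{Proof plan.} The natural tool is inflation--restriction applied to the tower $F \subseteq N_{\infty} \subseteq F_{\infty}$, so I begin by identifying $\Gal(F_{\infty}/N_{\infty})$. Since $F_{\infty}$ is the compositum of the three $\Z_{p}$-extensions arising from $K_{\infty}$, $K'_{\infty}$ and $\Q_{\cyc}$, and the natural map $G \rightarrow \Gamma\times \Gamma' \times \Gamma_{\cyc}$ is an isomorphism (established earlier in the paper), while $N_{\infty}=FK_{\infty,\infty}$ already contains the $K_{\infty}$ and cyclotomic directions, restriction gives a canonical isomorphism $\Gal(F_{\infty}/N_{\infty}) \cong \Gamma'$.

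Inflation--restriction then yields the five term exact sequence
$$
0 \rightarrow H^{1}(\Gamma', W^{G_{F_{\infty}}}) \rightarrow H^{1}(N_{\infty}, W) \xrightarrow{\beta} H^{1}(F_{\infty},W)^{\Gamma'} \rightarrow H^{2}(\Gamma', W^{G_{F_{\infty}}}),
$$
and it suffices to show that the two outer terms vanish. For the $H^{2}$ term, $\Gamma'\cong\Z_{p}$ is a procyclic pro-$p$ group, hence has $p$-cohomological dimension $1$; since $W^{G_{F_{\infty}}}$ is a $p$-primary discrete module this forces $H^{2}(\Gamma', W^{G_{F_{\infty}}})=0$.

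It remains to show $H^{1}(\Gamma', W^{G_{F_{\infty}}}) = 0$, and in fact I would prove the stronger statement $W^{G_{F_{\infty}}} = 0$. The image of $\overline{\theta}\colon G_{F}\rightarrow (\mathcal{O}/\mathfrak{m})^{\times}$ lies in a finite group of order coprime to $p$. Because $\Gal(F_{\infty}/F)$ is pro-$p$, the subgroups $G_{F_{\infty}}$ and $\ker(\overline{\theta})$ together generate $G_{F}$, so $\overline{\theta}\mid_{G_{F_{\infty}}}$ has the same image as $\overline{\theta}$, and is therefore nontrivial by hypothesis. Pick $g\in G_{F_{\infty}}$ with $\overline{\theta}(g)\neq 1$; then $\theta(g)-1 \in \mathcal{O}^{\times}$, and for any $w\in W^{G_{F_{\infty}}}$ we have $(\theta(g)-1)w=0$, forcing $w=0$.

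The argument is essentially formal; the only point requiring care is the last one, namely that the residual nontriviality of $\overline{\theta}$ on $G_{F}$ persists to $G_{F_{\infty}}$, which is where the hypothesis on $\overline{\theta}$ meets the pro-$p$ nature of $F_{\infty}/F$.
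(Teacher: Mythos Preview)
Your proof is correct and follows the same approach as the paper, which simply states that the lemma is immediate from the inflation--restriction exact sequence. You have carefully written out the details the paper omits: identifying $\Gal(F_{\infty}/N_{\infty})\cong\Gamma'$, invoking $\mathrm{cd}_{p}(\Gamma')=1$ to kill the $H^{2}$ term, and using the pro-$p$ nature of $F_{\infty}/F$ together with the prime-to-$p$ image of $\overline{\theta}$ to deduce $W^{G_{F_{\infty}}}=0$.
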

\begin{proof}
This is immediate from the inflation-restriction exact sequence. 
\end{proof}

\begin{lemma}
The map $\gamma$ is injective. 
\end{lemma}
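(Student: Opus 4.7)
The map $\gamma$ is the direct sum of local components $\gamma_v$ over $v \in \Sigma(F) \setminus \Sigma'$, so it suffices to show each $\gamma_v$ is injective; the archimedean contributions are trivial because $F$ is totally imaginary and $p$ is odd. To handle each $\gamma_v$ I would first commute the direct limit past the direct sum to rewrite
\[
J_v(N_\infty) \cong \bigoplus_{w \mid v \text{ in } N_\infty} H^1_{(*)}(N_{\infty,w}, W),
\qquad
J_v(F_\infty) \cong \bigoplus_{w' \mid v \text{ in } F_\infty} H^1_{(*)}(F_{\infty,w'}, W),
\]
where $H^1_{(*)} = H^1_s$ if $v \notin \Sigma_p(F)$ and $H^1_{(*)} = H^1$ otherwise. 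Since $\Gal(F_\infty/N_\infty) = \Gamma'$ acts transitively on the places of $F_\infty$ above each fixed $w \mid v$, with stabilizer $\Gamma'_{w'} := \Gal(F_{\infty,w'}/N_{\infty,w})$ for any chosen $w' \mid w$, projection onto the chosen $w'$ in each orbit identifies
\[
J_v(F_\infty)^{\Gamma'} \cong \bigoplus_{w \mid v \text{ in } N_\infty} H^1_{(*)}(F_{\infty,w'}, W)^{\Gamma'_{w'}},
\]
and under this identification $\gamma_v$ becomes the direct sum of the natural local restriction maps $\mathrm{res}_{w,w'} : H^1_{(*)}(N_{\infty,w}, W) \to H^1_{(*)}(F_{\infty,w'}, W)^{\Gamma'_{w'}}$.

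Next I would compute the kernel of each $\mathrm{res}_{w,w'}$ via the inflation--restriction sequence arising from $1 \to G_{F_{\infty,w'}} \to G_{N_{\infty,w}} \to \Gamma'_{w'} \to 1$. For $v \in \Sigma$ (where the full $H^1$ is used) this gives the kernel $H^1(\Gamma'_{w'}, W^{G_{F_{\infty,w'}}})$ immediately. For $v \nmid p$, $F_\infty/F$ is unramified at $v$, so $F_{\infty,w'}$ and $N_{\infty,w}$ share the same inertia subgroup in $G_{F_v}$ and $\mathrm{res}_{w,w'}$ is compatible with the short exact sequences $0 \to H^1_{\ur} \to H^1 \to H^1_s \to 0$ at both levels. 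A snake lemma argument then reduces the kernel of the induced map on $H^1_s$ to the same group $H^1(\Gamma'_{w'}, W^{G_{F_{\infty,w'}}})$, the unramified contribution being controlled by the parallel inflation--restriction for $\hat{\mathbb{Z}}$-cohomology, which contributes no new kernel.

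The problem therefore reduces to the vanishing $H^1(\Gamma'_{w'}, W^{G_{F_{\infty,w'}}}) = 0$ at every pair $(w,w')$. Since $\Gamma'_{w'}$ is a closed subgroup of $\Gamma' \cong \Zp$, it is either trivial --- in which case there is nothing to prove --- or topologically procyclic on a generator $\gamma_0$, whence $H^1(\Gamma'_{w'}, M) = M/(\theta(\gamma_0) - 1)M$ with $M := W^{G_{F_{\infty,w'}}}$. The easy case is $\bar\theta|_{G_{F_{\infty,w'}}}$ nontrivial, which forces $M = 0$ and finishes the argument. The main obstacle is the complementary subcase where $\bar\theta$ is trivial on $G_{F_{\infty,w'}}$: then $M$ is a nontrivial $\mathcal{O}$-submodule of $\mathbb{D}$, and, because $\Gamma'_{w'}$ is pro-$p$ while $\bar\theta$ has order prime to $p$, $\theta(\gamma_0) - 1$ necessarily lies in $\mathfrak{m}$; establishing surjectivity of this element on $M$ will require combining divisibility of $\mathbb{D}$ with a careful submodule analysis, and will exploit the specific nontriviality of $\theta$ at decomposition groups above $p$ supplied by Hyp(A) to rule out the troublesome subcase.
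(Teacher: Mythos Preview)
Your reduction to the vanishing of $H^1(\Gamma'_{w'}, W^{G_{F_{\infty,w'}}})$ is correct in spirit, but the ``main obstacle'' you identify is genuine and your proposed resolution does not work. First, the lemma is stated for an arbitrary continuous character $\theta$ and does not assume $\mathrm{Hyp(A)}$, so invoking it is illegitimate here. Second, even granting the hypotheses of the main theorem, they do not help: if $\Gamma'_{w'}$ is a nontrivial pro-$p$ group and $\theta$ has finite order prime to $p$, then $\theta(\gamma_0)=1$, so $H^1(\Gamma'_{w'},M)=M/(\theta(\gamma_0)-1)M=M$, which has no reason to vanish. If instead $\theta(\gamma_0)-1$ is a nonzero element of $\mathfrak{m}$, multiplication by it is surjective on $\mathbb{D}$ but not on a proper finite submodule $M\subsetneq\mathbb{D}$, and you have no control over which case occurs. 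The conditions in $\mathrm{Hyp(A)}$ concern the restriction of $\theta$ to decomposition groups at primes above $p$ in $G_F$, not to the local decomposition group $\Gamma'_{w'}\subset\Gamma'$; they do not give what you need.

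The paper avoids all of this by showing directly that $\Gamma'_{w'}$ is \emph{always trivial}, i.e.\ that the local completions $N_{\infty,w}$ and $F_{\infty,w'}$ coincide for every place. For $v\nmid p$, any abelian $p$-extension of $F_v$ inside $F_\infty$ or $N_\infty$ is unramified, hence contained in the cyclotomic $\Zp$-extension $F_{v,\cyc}$; since both towers contain $\Q_\infty$, one gets $N_{\infty,w}=F_{\infty,w'}=F_{v,\cyc}$. For $v\mid p$ one has $F_v=\Qp$ (as $p$ splits completely in $F$), and the primes of $K$ above $p$ are finitely decomposed in the $\Zp^2$-extension $K_{\infty,\infty}$, so $N_{\infty,w}/\Qp$ is already a $\Zp^2$-extension and hence equals the maximal abelian $p$-extension of $\Qp$; there is no room for $F_{\infty,w'}$ to be strictly larger. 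With $\Gamma'_{w'}=1$ the restriction maps are identities and injectivity of $\gamma$ is immediate, with no hypothesis on $\theta$ whatsoever.
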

\begin{proof}
For $v\in \Sigma(F)$, let $w$ be a place of $F_{\infty}$ above $v$, and let $w'$ denote the place of $N_{\infty}$ which lies below $w$. Then we define $F_{\infty,w} = \bigcup M_{w}$ as $M$ runs over the finite extensions of $F$ contained inside $F_{\infty}$, and   we define $N_{\infty,w}$ similarly.

In the case $v\nmid p$, $F_{\infty,w}$ is an unramified $p$-extension of $F_{v}$, which contains the cyclotomic $\mathbb{Z}_{p}$-extension of $F_{v}$. It follows that $F_{\infty,w}= F_{v,cyc}$ and similarly $N_{\infty,w}= F_{v,cyc}$.

In the case $v\mid p$ we claim that $N_{\infty,w}$ is equal to the maximal abelian $p$-extension of $F_{v} = \Qp$. By local class field theory this is a $\Z_{p}^{2}$-extension of $\Qp$ which is a composite of $\Q_{p,\cyc}$ and the maximal unramified $p$-extension of $\Qp$. 

Note that $N_{\infty}$ is the composite of $F$ and the unique $\Z_{p}^{2}$-extension of $K$. The claim now follows since the primes above $p$ in $K$ do not split infinitely in the $\Z_{p}^{2}$-extension of $K$. 
\end{proof}

\begin{corollary}
If the residual character $\overline{\theta}: G_{F} \rightarrow \left(\mathcal{O}/\mathfrak{m}\right)^{\times}$ is non-trivial then $\alpha$ is an isomorphism.
\end{corollary}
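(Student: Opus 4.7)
The plan is to view the corollary as a formal consequence of the two preceding lemmas via the snake lemma (equivalently, a four-lemma-style diagram chase) applied to the commutative diagram with exact rows displayed just before the corollary. The preceding lemmas give us $\beta$ is an isomorphism (using $\overline{\theta} \neq 1$) and $\gamma$ is injective, and these two facts are exactly what is needed to force $\alpha$ to be an isomorphism.

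For injectivity of $\alpha$, I would take $x \in \ker \alpha$, push it into $H^{1}(N_{\infty},W)$ along the inclusion in the top row, and use commutativity of the left square: the image in $H^{1}(F_{\infty},W)^{\Gamma'}$ is zero, so $\beta$ sends $x$ to zero, and injectivity of $\beta$ gives $x = 0$.

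For surjectivity, I would start with $z \in \Sel^{\Sigma'}_{\Sigma}(F_{\infty},W)^{\Gamma'}$, view it inside $H^{1}(F_{\infty},W)^{\Gamma'}$, and use surjectivity of $\beta$ to lift it to some $y \in H^{1}(N_{\infty},W)$. Commutativity of the right square shows that the image of $y$ in $\bigoplus_{v} J_{v}(N_{\infty})$ is mapped by $\gamma$ to the image of $z$ in $\bigoplus_{v} J_{v}(F_{\infty})^{\Gamma'}$, which vanishes because $z$ lies in the Selmer group. Injectivity of $\gamma$ then forces $y$ into the kernel of the top row's right-hand map, i.e.\ $y \in \Sel^{\Sigma'}_{\Sigma}(N_{\infty},W)$, and by construction $\alpha(y) = z$.

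There is no real obstacle here; the content of the corollary is entirely contained in the two lemmas, and the argument is a purely formal diagram chase. The only mild subtlety worth noting is making sure the commutativity of the right square is applied in the top row of the target diagram (where $z$ maps to $0$) rather than in $\bigoplus_v J_v(F_\infty)$, but this is automatic from the construction of $\gamma$ as the restriction-induced map landing in the $\Gamma'$-invariants.
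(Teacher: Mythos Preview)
Your proof is correct and is exactly the intended argument: the paper states the corollary without proof because it follows formally from the two preceding lemmas by the diagram chase you describe.
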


For $\Sigma, \Sigma'$ as above, we define 
$$
X^{\Sigma'}_{\Sigma}(F_{\infty},W) : = \Hom_{\mathcal{O}}( \Sel^{\Sigma'}_{\Sigma }(F_{\infty},W) ,\mathbb{D}),
$$
which is naturally a compact $\Lambda_{\mathcal{O}}(G)$-module. Similarly we define the  $\Lambda_{\mathcal{O}}(H)$-module $X^{\Sigma'}_{\Sigma}(N_{\infty},W)$. The results above show that if $\overline{\theta}$ is non-trivial, then we have an isomorphism of $\Lambda_{\mathcal{O}}(H)$-modules
$$
X^{\Sigma}_{\Sigma'}(F_{\infty},W)_{\Gamma'} \cong X^{\Sigma}_{\Sigma'}(N_{\infty},W).
$$

Under the isomorphism from Shapiro's we can identify $\Sel^{\Sigma'}_{\Sigma }(N_{\infty},W)$ with a certain subgroup of $H^{1}(K_{\infty,\infty}',\Ind^{K}_{F}W)$. From the discussion below we see that this subgroup can be interpreted as a Selmer group associated to the $G_{K}$-representation $\Ind^{K}_{F} W$. Given a place $v$ of $K$, we have an isomorphism of $G_{K_{v}}$-modules
$$
\Ind_{F}^{K} W \cong \prod_{w\mid v} \Ind_{F_{w}}^{K_{v}} W,
$$
where the product runs over the places $w$ of $F$ lying above $v$. Shapiro's Lemma then gives us a canonical isomorphism 
\begin{equation}\label{Shres}
H^{1}(K_{v},\Ind_{F}^{K} W) \cong \prod_{w\mid v} H^{1}(F_{w}, W).
\end{equation} 
The argument given in \cite[Section 3.1]{SU} shows that the above isomorphism identifies the unramified classes on each side. Furthermore, by \cite[Lemma I.3.5]{R}, $H^{1}_{f}(F_{w},W)$ is the maximal divisible subgroup of $H^{1}_{\ur}(F_{w},W)$, and similarly for $H^{1}_{f}(K_{v},\Ind^{K}_{F}W)$. It follows that \eqref{Shres} gives rise to an isomorphism 
$$
H^{1}_{f}(K_{v},\Ind_{F}^{K} W) \cong \prod_{w\mid v} H^{1}_{f}(F_{w}, W).
$$
Now suppose that $N/K$ is a finite field extension linearly disjoint from $F/K$. Then the 
following diagram commutes
$$
\begin{CD}
H^{1}(N,\Ind_{F}^{K}W) @>>> H^{1}(FN,W)\\
 @V \loc_{v} VV @VV \prod \loc_{w} V \\
H^{1}(N_{v},\Ind_{F}^{K}W) @>>>\prod_{w\mid v} H^{1}(FN_{w},W).
\end{CD}
$$
In particular for an arbitrary algebraic extension $N/K$ linearly disjoint from $F/K$, the isomorphism from Shapiro's lemma  identifies $\Sel^{\Sigma_{p}(F)}(FN,W)$ with $\Sel^{\Sigma_{p}(K)}(N,\Ind_{F}^{K}W)$.

When $\Sigma \cup \Sigma'$ is a disjoint union of $\Sigma_{p}(F)$, we define $\Sel^{\Sigma'}_{\Sigma}(N,\Ind_{F}^{K}W)$
 by the exact sequence
$$
0\rightarrow \Sel^{\Sigma'}_{\Sigma}(N,\Ind_{F}^{K}W) \rightarrow \Sel^{\Sigma_{p}(K)}(N,\Ind_{F}^{K}W) \rightarrow \bigoplus_{v\in \Sigma} J_{v}(NF),
$$
so that  $\Sel^{\Sigma'}_{\Sigma}(N,\Ind_{F}^{K}W)$ is identified with  $\Sel^{\Sigma'}_{\Sigma}(FN,W)$.

\subsection{Euler system and its twists}\label{esat}

In this subsection we describe the Euler system for the $G_{F}$-representation $\mathcal{O}(\vartheta)$, where $\vartheta:G_{F}\rightarrow \mathcal{O}^{\times}$ is a continuous character satisfying the following hypothesis. We recall that $\sigma$ and $\sigma'$ are generators of $\Gal(F/K)$ and $\Gal(F/K')$ respectively. 

\begin{hyp*} We assume that $\vartheta : G_{F}\rightarrow \mathcal{O}^{\times}$ can be written in the form $\vartheta = \chi_{p} \chi_{p}' \omega^{i}$ for some $i$ modulo $p-1$, and where $\chi$ and $\chi'$ are algebraic Hecke characters of $K$ and $K'$ of infinity types $(k+1,0)$ and $(k'+1,0)$ respectively with conductors coprime to $p$, such that the following conditions hold:  (i) $  \ker(\overline{\vartheta}_{G_{F(\mu_{p})}}^{\sigma}) \neq \ker(\overline{\vartheta}_{G_{F(\mu_{p})}})$, (ii)  $\overline{\vartheta}_{ \mid D_{\mathfrak{p}_{1}}} \neq \overline{\vartheta}_{ \mid D_{\mathfrak{p}_{1}}}^{\sigma}$  and (iii) $\overline{\vartheta}_{ \mid D_{\mathfrak{p}_{1}}} \neq \overline{\vartheta}_{ \mid D_{\mathfrak{p}_{1}}}^{\sigma'}$. 
\end{hyp*}

We first construct an Euler system for a twist of $\mathcal{O}(\vartheta)$ by a character of $G$. To describe this twist we need the following lemma. 

\begin{lemma} There exists an algebraic Hecke character $\eta$ of $K$ with modulus $\mathfrak{p}$, infinity type $(1,0)$, and such that associated Galois representation $\eta_{p}:G_{K}\rightarrow \overline{\Q}_{p}^{\times}$ factors through $\Gamma$.
\end{lemma}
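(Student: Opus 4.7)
The plan is to construct $\eta$ in two stages and then adjust by a finite-order twist.

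First, I would exploit that $p \geq 5$ and $|\mathcal{O}_{K}^{\times}| \in \{2,4,6\}$: a routine case-check shows that no non-trivial unit of $\mathcal{O}_{K}$ is congruent to $1$ modulo $\mathfrak{p}$. Hence the rule $(\alpha) \mapsto \alpha$, for $\alpha$ the unique generator with $\alpha \equiv 1 \pmod{\mathfrak{p}}$, gives a well-defined homomorphism $\tilde{\eta} : P_{\mathfrak{p}} \to K^{\times}$, where $P_{\mathfrak{p}}$ denotes the principal ideals in $I(\mathfrak{p})$ admitting such a generator. Because $I(\mathfrak{p})/P_{\mathfrak{p}} = \mathrm{Cl}_{\mathfrak{p}}(K)$ is finite and $\overline{\mathbb{Q}}^{\times}$ is divisible, $\tilde{\eta}$ extends to a homomorphism $\eta_{0} : I(\mathfrak{p}) \to \overline{\mathbb{Q}}^{\times}$, which is by construction an algebraic Hecke character of modulus $\mathfrak{p}$ and infinity type $(1,0)$.

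Second, I would analyse the Galois avatar $\eta_{0,p}$ using class field theory. Since $\eta_{0}$ has modulus $\mathfrak{p}$, the character $\eta_{0,p}$ is unramified outside $\mathfrak{p}$ and factors through the Galois group $\mathcal{G}$ of the maximal abelian extension of $K$ unramified outside $\mathfrak{p}$. Local class field theory at the split prime $\mathfrak{p}$, together with the fact that $K_{\infty}/K$ is totally ramified at $\mathfrak{p}$, yields an exact sequence
\[
0 \longrightarrow \Gamma \longrightarrow \mathcal{G} \longrightarrow \mathrm{Cl}_{\mathfrak{p}}(K) \longrightarrow 0,
\]
in which the subgroup $\Gamma$ is the image under the local Artin map of $1 + p\mathbb{Z}_{p} = 1 + \mathfrak{p}\mathcal{O}_{K_{\mathfrak{p}}}$. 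As a finitely generated pro-finite abelian group of $\mathbb{Z}_{p}$-rank one, $\mathcal{G}$ splits as $\Gamma \oplus T$ with $T \cong \mathrm{Cl}_{\mathfrak{p}}(K)$ its torsion subgroup, so $\eta_{0,p}$ factors through $\Gamma$ precisely when $\eta_{0,p}|_{T}$ is trivial.

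Third, different extensions of $\tilde{\eta}$ in the first step differ by characters of $\mathrm{Cl}_{\mathfrak{p}}(K)$, and the resulting $p$-adic avatars differ by characters of $\mathcal{G}$ factoring through $\mathcal{G}/\Gamma = T$. I would take the twist to be the inverse of $\eta_{0,p}|_{T}$, viewed via $\iota_{p}$ as a character of $\mathrm{Cl}_{\mathfrak{p}}(K)$ with values in $\overline{\mathbb{Q}}^{\times}$; the resulting $\eta$ then has $\eta_{p}$ factoring through $\Gamma$, as required.

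The main obstacle is the structural identification $\mathcal{G} \cong \Gamma \oplus \mathrm{Cl}_{\mathfrak{p}}(K)$: this relies on recognising $\Gamma$ as the image of $1 + p\mathbb{Z}_{p}$ under local Artin (so that $\mathcal{G}/\Gamma = \mathrm{Cl}_{\mathfrak{p}}(K)$) and then applying the structure theorem for finitely generated $\mathbb{Z}_{p}$-modules to the pro-$p$ part of $\mathcal{G}$. Once that is in hand, the remaining steps are formal.
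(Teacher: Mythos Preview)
Your approach is correct and reaches the goal, but it differs from the paper's proof. The paper constructs $\eta_p$ directly so that its image is torsion-free: given $\mathfrak{a}\in I(\mathfrak{p})$ one picks $\alpha$ with $(\alpha)=\mathfrak{a}^{h}$ (where $h=h_K$) and sets $\eta_p(\mathfrak{a})=r(\langle\iota_p(\alpha)\rangle)$, where $\langle\,\cdot\,\rangle:\Zp^\times\to 1+p\Zp$ is the projection and $r:1+p\Zp\to\omega^{\Zp}$ is the $\Zp$-linear map sending $1+p$ to a fixed $h$-th root $\omega$ of $1+p$ with $\omega\equiv 1\pmod{\mathfrak{P}}$. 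A short check gives infinity type $(1,0)$ and modulus $\mathfrak{p}$, and since the image lies in a torsion-free group while $\Gal(K(\mathfrak{p}^\infty)/K_\infty)$ is torsion, the Galois avatar factors through $\Gamma$ automatically---no twisting step is needed. Your route is more structural (build an arbitrary $\eta_0$, then twist off the obstruction on the torsion part of $\mathcal{G}$), which makes the role of class field theory transparent; the paper's $h$-th-root trick is more ad hoc but bypasses the analysis of $\mathcal{G}$ entirely. One caution in your write-up: the assertion that $K_\infty/K$ is totally ramified at $\mathfrak{p}$, equivalently that the image $\Gamma_0$ of $1+p\Zp$ is a direct complement to the torsion $T$ in $\mathcal{G}$, is not automatic and can fail when $p\mid h_K$. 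Fortunately you do not actually need it: since $\Gamma_0$ is torsion-free one always has $T\hookrightarrow\mathcal{G}/\Gamma_0=\mathrm{Cl}_{\mathfrak{p}}(K)$, so every character of $T$ extends to a ray-class character of modulus $\mathfrak{p}$ and the required finite-order twist still exists.
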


\begin{proof} 
Let $I(\mathfrak{p})$ denote the group of fractional ideals of $K$ coprime to $\mathfrak{p}$. 
We first define $\eta_{p} : I(\mathfrak{p}) \rightarrow \overline{\Q}_{p}^{\times}$ which takes values in $\iota_{p}(\overline{\Q})$, and then put $\eta = \iota_{p}^{-1}\circ \eta_{p}$. We let $h$ denote the class number of $K$, and choose $\omega\in \overline{\Q}_{p}$ such that $\omega^{h} = 1+p$ and $\omega \equiv 1$ modulo $\mathfrak{P}$. Let $W=\omega^{\Zp}$, and let $r:1+p\Zp\rightarrow W$ be the unique $\Zp$-module homomorphism such that $r(1+p)=\omega$. We define $\eta_{p}$ as follows. Given an ideal $\mathfrak{a}\in I(\mathfrak{p})$, take $\alpha \in K$ such that $\alpha \mathcal{O}_{K} = \mathfrak{a}^{h}$, which is defined up to an element of $\mu_{K}$. Then we define 
$
 \eta_{p}(\mathfrak{a}) = r(\left<\iota_{p}(\alpha)\right>), 
$
where $\left<\cdot\right>$ denotes the projection map from $\Z_{p}^{\times}$ onto $1+p\Zp$. It is clear that the associated Galois representation factors through $\Gal(K(\mathfrak{p}^{\infty})/K)$. Since $\Gal(K(\mathfrak{p}^{\infty})/K_{\infty})$ is torsion and the image of $\eta_{p}$ is torsion free, the representation factors through $\Gamma$. 
\end{proof}

We choose algebraic Hecke characters $\eta,\eta'$ of $K,K'$ as in the lemma above. We then put $\psi= \eta^{-k}\chi$, $\psi'= \eta'^{-k'}\chi'$, so that $\psi$ and $\psi'$ have infinity types $(1,0)$ and conductors coprime to the prime below $\mathfrak{p}_{4}$. We let $\mathfrak{f}$ and $\mathfrak{f}'$ denote the prime to $p$ part of the conductors of $\psi$ and $\psi'$ respectively. As in the previous section, we define $N=\N_{F/K}(\mathfrak{f})D$, $N'=\N_{F/K'}(\mathfrak{f}')D'$, and choose an integer $(c,6NN'p)=1$. For  $\mathfrak{r}\in\mathcal{R}(\mathfrak{f}cpN')$, let $S$ denote a finite set of places containing the infinite places and those dividing $NN'\N_{K/\Q}(\mathfrak{r})p$.

For $\rho:\Gamma'\rightarrow \mathcal{O}^{\times}$ is a continuous character,
we define ${}_{c}z(\mathfrak{r},\vartheta\rho)$ as the image of ${}_{c}\mathcal{BF}(\mathfrak{r},\psi,\psi',\omega^{i})^{(\eta_{p}^{k}\eta_{p}^{\prime k'}\rho)}$ under the composition of maps
\begin{align*}
 H^{1}_{S}(F, \Lambda_{\mathcal{O}}^{\#}(G(\mathfrak{r}))(\vartheta\rho)) &\rightarrow  H^{1}_{S}(F, \Lambda_{\mathcal{O}}^{\#}(H(\mathfrak{r}p^{\infty}))(\vartheta\rho) )\\ 
&\cong H^{1}_{S}(K, \Lambda_{\mathcal{O}}^{\#}(H(\mathfrak{r}p^{\infty})) \otimes Ind^{K}_{F}\mathcal{O}(\vartheta\rho)),
\end{align*}
where the last isomorphism is given by Shapiro's lemma. We also write ${}_{c}z(\vartheta\rho)$ for the image of ${}_{c}z((1),\vartheta\rho)$ in  $H^{1}_{S}(K, \Lambda_{\mathcal{O}}^{\#}(H) \otimes Ind^{K}_{F}\mathcal{O}(\vartheta\rho))$, where we recall that $H=\Gal(K_{\infty,\infty}/K)\cong \Z_{p}^{2}$. The following theorem shows that the classes above define an Euler system for $(\Ind^{K}_{F}\mathcal{O}(\vartheta\rho),K_{\infty,\infty})$.
\begin{theorem}
Suppose that $\rho: \Gamma'\rightarrow \mathcal{O}^{\times}$ is a continuous character and suppose that $\mathfrak{rl}\in \mathcal{R}(\mathfrak{f}cpN')$, where $\mathfrak{l}$ is a split prime ideal of $K$. Then under the  restriction map $\pi:\Lambda_{\mathcal{O}}(G(\mathfrak{rl})) \rightarrow \Lambda_{\mathcal{O}}(G(\mathfrak{r}))$ we have 
$$
\pi({}_{c}z(\mathfrak{rl},\vartheta\rho)) = P_{\mathfrak{l},\vartheta\rho}(\Frob^{-1}_{\mathfrak{l}})\cdot {}_{c}z(\mathfrak{r},\vartheta\rho),
$$
where $P_{\mathfrak{l},\vartheta\rho}(X) = \det(1-X\cdot\Frob_{\mathfrak{l}}^{-1} \mid \Ind^{K}_{F}\mathcal{O}(\vartheta\rho)^{*}(1))\in \mathcal{O}[X]$.
\end{theorem}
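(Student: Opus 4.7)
The plan is to reduce the claimed norm relation to Theorem \ref{thm17} by tracking the Euler factor through the chain of constructions that defines ${}_{c}z(\mathfrak{r},\vartheta\rho)$ from ${}_{c}\mathcal{BF}(\mathfrak{rf},\psi,N',\omega^{i})$. First I would apply Theorem \ref{thm17} at $\mathfrak{n}=\mathfrak{rf}$ with the split prime $\mathfrak{l}$, to obtain
$$
\mathcal{N}^{\mathfrak{rlfp}^{\infty}}_{\mathfrak{rfp}^{\infty}}\bigl({}_{c}\mathcal{BF}(\mathfrak{rlf},\psi,N',\omega^{i})\bigr) = P_{\mathfrak{l}}\bigl(\ell^{-1}\sigma_{\mathfrak{l}}^{-1}\otimes\sigma_{\ell}^{-1}\bigr)\cdot {}_{c}\mathcal{BF}(\mathfrak{rf},\psi,N',\omega^{i}),
$$
in $H^{1}_{S}(\Q, H^{1}_{\ord}(\psi,\mathfrak{rfp}^{\infty})\widehat{\otimes}H^{1}_{\ord}(N'p^{\infty})\widehat{\otimes}\Lambda^{\#}(\Gamma_{\cyc})(\omega^{i}))$, with $T'(\ell)$ and $\langle\ell^{-1}\rangle$ acting on the $H^{1}_{\ord}(N'p^{\infty})$-factor.

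Next, using the compatibility of $\nu_{\mathfrak{rf}}$ with the $\mathcal{N}$-maps (Corollary \ref{cd1}), the isomorphism \eqref{iso'}, and Shapiro's lemma together with the Mackey identification $\Ind^{\Q}_{K}\mathcal{O}(\psi_{p})\otimes \Ind^{\Q}_{K'}\mathcal{O}(\psi'_{p}) \cong \Ind^{\Q}_{F}\mathcal{O}(\psi_{p}\psi'_{p})$ (which holds since $K\cap K'=\Q$ and the two fields are linearly disjoint over $\Q$), the displayed identity translates to
$$
\pi\bigl({}_{c}\mathcal{BF}(\mathfrak{rl},\psi,\psi',\omega^{i})\bigr) = Q_{\mathfrak{l}}\cdot {}_{c}\mathcal{BF}(\mathfrak{r},\psi,\psi',\omega^{i})
$$
inside $H^{1}_{S}(F,\Lambda^{\#}_{\mathcal{O}}(G(\mathfrak{r}))(\psi_{p}\psi'_{p}\omega^{i}))$, where $Q_{\mathfrak{l}}$ is the image of $P_{\mathfrak{l}}(\ell^{-1}\sigma_{\mathfrak{l}}^{-1}\otimes\sigma_{\ell}^{-1})$ under the ring homomorphism $\phi'$ of \eqref{ringhom}; explicitly, $T'(\ell)\mapsto \sum_{\mathfrak{l}'\mid\ell}\psi'(\mathfrak{l}')\sigma_{\mathfrak{l}'}^{-1}$ and $\langle\ell^{-1}\rangle\mapsto \varepsilon_{\psi'}(\ell)\sigma_{(\ell)}^{-1}$.

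The central step is to verify $Q_{\mathfrak{l}} = P_{\mathfrak{l},\psi_{p}\psi'_{p}\omega^{i}}(\Frob_{\mathfrak{l}}^{-1})$, where $P_{\mathfrak{l},\psi_{p}\psi'_{p}\omega^{i}}$ is the characteristic polynomial of $\Frob_{\mathfrak{l}}^{-1}$ acting on $(\Ind^{K}_{F}\mathcal{O}(\psi_{p}\psi'_{p}\omega^{i}))^{*}(1)$. This requires a case analysis according to the splitting behaviour of the rational prime $\ell$ beneath $\mathfrak{l}$ in $K'/\Q$: if $\ell$ splits in $K'$ then $\mathfrak{l}$ splits in $F/K$ and the restriction of $\Ind^{K}_{F}\mathcal{O}(\psi_{p}\psi'_{p}\omega^{i})$ to $D_{\mathfrak{l}}$ is a direct sum of two characters, producing a factored degree-two polynomial; if $\ell$ is inert in $K'$ then $\mathfrak{l}$ is inert in $F/K$ and the restriction is irreducible, producing an irreducible polynomial in $\Frob_{\mathfrak{l}}^{-2}$. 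In each case one matches the explicit formula for $Q_{\mathfrak{l}}$ against the Frobenius characteristic polynomial, using the arithmetic normalisation of the Artin map to identify $\sigma_{\mathfrak{l}'}^{-1}$ acting via $\psi'_{p}$ with the Frobenius action on the induced module.

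Finally, the twisted class ${}_{c}z(\mathfrak{rl},\vartheta\rho)$ is obtained by applying $\alpha_{\eta_{p}^{k}\eta_{p}'^{k'}\rho}$ followed by Shapiro's lemma. By \eqref{twista} the previous identity yields
$$
\pi\bigl({}_{c}z(\mathfrak{rl},\vartheta\rho)\bigr) = \Tw_{(\eta_{p}^{k}\eta_{p}'^{k'}\rho)^{-1}}(Q_{\mathfrak{l}})\cdot {}_{c}z(\mathfrak{r},\vartheta\rho),
$$
and one checks directly that twisting the coefficients of a characteristic polynomial of Frobenius by a character of $G$ corresponds to twisting the underlying Galois representation by the same character, so $\Tw_{(\eta_{p}^{k}\eta_{p}'^{k'}\rho)^{-1}}(P_{\mathfrak{l},\psi_{p}\psi'_{p}\omega^{i}}(\Frob_{\mathfrak{l}}^{-1}))$ equals $P_{\mathfrak{l},\vartheta\rho}(\Frob_{\mathfrak{l}}^{-1})$. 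The main obstacle is the case-by-case Euler factor computation in the third paragraph: one must carefully reconcile the Hecke-theoretic expression (with sums over split primes and explicit Artin symbols) with the Galois-theoretic determinant on an induced representation, keeping precise track of conventions for arithmetic Frobenius, Artin normalisation, and the cyclotomic twist.
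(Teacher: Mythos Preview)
Your proposal is correct and follows exactly the approach indicated in the paper, which simply records that the result follows from Theorem \ref{thm17} and \eqref{twista}. What you have written is a careful unpacking of these two ingredients: Theorem \ref{thm17} provides the norm relation with the Hecke-theoretic Euler factor, the compatibility of the maps $\nu_{\mathfrak{rf}}$, \eqref{iso'}, and Shapiro's lemma transport it to $F$, and \eqref{twista} handles the final twist by $\eta_{p}^{k}\eta_{p}'^{k'}\rho$; the Euler-factor matching you describe is the content hidden behind the paper's one-line proof.
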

\begin{proof} This follows from Theorem \ref{thm17} and \eqref{twista}.  
\end{proof}

We remark that by our assumption on $\vartheta$, the hypotheses $Hyp(K_{\infty,\infty},\Ind^{K}_{F}\mathcal{O}(\vartheta\rho))$ of \cite[II.3]{R} are satisfied by the Euler system above.
 
\begin{theorem}\label{ESM}
Suppose that $\rho : \Gamma' \rightarrow \mathcal{O}^{\times}$ is a continuous character, and put $W=\mathbb{D}(\vartheta\rho)$. Then we have 
\begin{multline*}
\Char_{\Lambda_{\mathcal{O}}(H)}\left( X^{\mathfrak{p}_{3},\mathfrak{p}_{4}}_{\mathfrak{p}_{1},\mathfrak{p}_{2}}(N_{\infty},W^{*}(1))\right) \text{ divides } \\
\Char_{\Lambda_{\mathcal{O}}(H)} \left( H^{1}(F_{\mathfrak{p}_{3}}, \Lambda_{\mathcal{O}}^{\#}(H)(\vartheta\rho)) / \Lambda_{\mathcal{O}}(H) \loc_{\mathfrak{p}_{3}}(z(\vartheta\rho)) \right).
\end{multline*}
\end{theorem}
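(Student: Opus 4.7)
The plan is to apply Rubin's Euler system machinery \cite[II.3]{R} to the system $\{z(\mathfrak{r},\vartheta\rho)\}_{\mathfrak{r}\in\mathcal{R}(\mathfrak{f}pN')}$ for the $G_K$-representation $T_\rho := \Ind^K_F\mathcal{O}(\vartheta\rho)$ over the $\Z_p^2$-extension $K_{\infty,\infty}/K$. As already noted in the remark following the norm relation for $\pi({}_{c}z(\mathfrak{rl},\vartheta\rho))$, the hypothesis $\Hyp(K_{\infty,\infty},T_\rho)$ of \cite[II.3]{R} follows from condition (i) of Hyp(A), which via Mackey's criterion forces the residual representation $\Ind^K_F\overline{T}_\rho$ to be absolutely irreducible.

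With strict local conditions at $\mathfrak{p}$ and relaxed at $\overline{\mathfrak{p}}$, Rubin's theorem then yields a divisibility of characteristic ideals between the Pontryagin dual of the corresponding Selmer group for $T_\rho^*(1)$ over $K_{\infty,\infty}$ and the local cohomology quotient $H^1(K_{\overline{\mathfrak{p}}},\Lambda^\#_\mathcal{O}(H)\otimes T_\rho)/\Lambda_\mathcal{O}(H)\cdot \loc_{\overline{\mathfrak{p}}}(z(\vartheta\rho))$. To convert this into the statement of the theorem, I would invoke Shapiro's lemma twice: globally, as explained in Section~\ref{secsel}, to identify the dual Selmer group with $X^{\mathfrak{p}_3,\mathfrak{p}_4}_{\mathfrak{p}_1,\mathfrak{p}_2}(N_\infty,W^*(1))$; and locally, via the decomposition
\[
H^1(K_{\overline{\mathfrak{p}}},\Lambda^\#_\mathcal{O}(H)\otimes T_\rho)\cong H^1(F_{\mathfrak{p}_3},\Lambda^\#_\mathcal{O}(H)(\vartheta\rho))\oplus H^1(F_{\mathfrak{p}_4},\Lambda^\#_\mathcal{O}(H)(\vartheta\rho)).
\]

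The decisive input is Theorem~\ref{lp}: together with the twisting compatibility \eqref{twista}, it implies $\loc_{\mathfrak{p}_4}(z(\vartheta\rho))=0$. This vanishing allows one to strengthen the local Selmer condition at $\mathfrak{p}_4$ to ``strict'' without forfeiting the Euler system class, so that Rubin's bound applies to the tighter Selmer group $X^{\mathfrak{p}_3,\mathfrak{p}_4}_{\mathfrak{p}_1,\mathfrak{p}_2}$ with only the $\mathfrak{p}_3$-component of the local cohomology appearing on the right. The main obstacle I anticipate is the bookkeeping of Selmer local conditions: since $\mathfrak{p}_3$ and $\mathfrak{p}_4$ both lie above the single prime $\overline{\mathfrak{p}}$ of $K$, the upgrade of the local condition at $\mathfrak{p}_4$ must be realised by refining the Shapiro-induced Selmer structure at the $F$-adic level, rather than by a direct appeal to Rubin's theorem in its raw form over $K$.
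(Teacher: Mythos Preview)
Your ingredients are right—Rubin's machinery for $T_\rho=\Ind^K_F\mathcal{O}(\vartheta\rho)$ over $K_{\infty,\infty}$, Shapiro's lemma both globally and locally, and the vanishing at $\mathfrak{p}_4$ from Theorem~\ref{lp}—but the way you thread them together does not work as written. If you apply \cite[Theorem~II.3.8]{R} with relaxed conditions at $\overline{\mathfrak{p}}$, the right-hand side is the characteristic ideal of $H^1(K_{\overline{\mathfrak{p}}},\Lambda_{\mathcal{O}}^{\#}(H)\otimes T_\rho)/\Lambda_{\mathcal{O}}(H)\cdot\loc_{\overline{\mathfrak{p}}}(z)$. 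This local $H^1$ has $\Lambda_{\mathcal{O}}(H)$-rank $2$ (it is the sum of the rank-one pieces at $\mathfrak{p}_3$ and $\mathfrak{p}_4$), so quotienting by a single cyclic submodule leaves a non-torsion module and the bound is vacuous. The vanishing $\loc_{\mathfrak{p}_4}(z)=0$ makes this worse, not better: the full $H^1(F_{\mathfrak{p}_4},\cdot)$ survives as a direct summand of the quotient.

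The paper fixes this by reversing the two steps. First the vanishing at $\mathfrak{p}_4$ is fed into the \emph{proof} of \cite[Theorem~II.3.3]{R}: since Theorem~\ref{lp} applies to ${}_c\mathcal{BF}(\mathfrak{r},\psi,\psi',\omega^i)$ for \emph{every} $\mathfrak{r}$, all Kolyvagin derivative classes are locally trivial at $\mathfrak{p}_4$, and Rubin's global-duality argument then already bounds the intermediate group $X^{\mathfrak{p}_4}_{\mathfrak{p}_1,\mathfrak{p}_2,\mathfrak{p}_3}$ (relaxed at $\mathfrak{p}_4$, still strict at $\mathfrak{p}_3$) by $\ind_{\Lambda_{\mathcal{O}}(H)}({}_cz(\vartheta\rho))$. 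Only then is the II.3.8 mechanism invoked, and at the single prime $\mathfrak{p}_3$, via the Poitou--Tate sequence
\[
0 \to \frac{H^1(F_{\mathfrak{p}_3},\Lambda_{\mathcal{O}}^{\#}(H)(\vartheta\rho))}{\loc_{\mathfrak{p}_3}H^1(F,\Lambda_{\mathcal{O}}^{\#}(H)(\vartheta\rho))} \to X^{\mathfrak{p}_3,\mathfrak{p}_4}_{\mathfrak{p}_1,\mathfrak{p}_2} \to X^{\mathfrak{p}_4}_{\mathfrak{p}_1,\mathfrak{p}_2,\mathfrak{p}_3} \to 0.
\]
Your final paragraph correctly flags the obstacle, but the resolution is not a post-hoc refinement of the Shapiro decomposition; it is that the $\mathfrak{p}_4$-vanishing must enter at the level of the derivative classes (hence for the whole Euler system, not just the bottom class $z$) \emph{before} one relaxes the dual Selmer condition at $\mathfrak{p}_3$.
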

\begin{remark} When $X$ is a finitely generated $\Lambda_{\mathcal{O}}(H)$-module which is not torsion, we adopt the convention that $\Char_{\Lambda_{\mathcal{O}}(H)}(X)=0.$
\end{remark}
\begin{proof}
By applying \cite[Theorem II.3.3]{R} we obtain the divisibility
$$
\Char_{\Lambda_{\mathcal{O}}(H)}(X^{\mathfrak{p}_{4}}_{\mathfrak{p}_{1},\mathfrak{p}_{2},\mathfrak{p}_{3}}(N_{\infty},W^{*}(1))) \text{ divides }\ind_{\Lambda_{\mathcal{O}}(H)}({}_{c}z(\vartheta\rho)),
$$
where the index $\ind_{\Lambda_{\mathcal{O}}(H)}(({}_{c}z(\vartheta\rho)))\subset \Lambda_{\mathcal{O}}(H)$ is as defined in \cite[II.3]{R}.
We note that the statement \textit{loc. cit.} concerns the Selmer group with the strict condition at all primes above $p$. However, using Theorem \ref{lp}, and the definition of the Kolyvagin derivative classes constructed in the proof \cite[Chapter IV]{R}, it follows that the derivative classes are locally trivial at $\mathfrak{p}_{4}$. The proof of \cite[Theorem II.3.3]{R} then strengthens to give the result above. 

The theorem now follows from applying the proof of \cite[Theorem II.3.8]{R} to the exact sequence 
$$
0 \rightarrow \frac{H^{1}(F_{\mathfrak{p}_{3}},\Lambda_{\mathcal{O}}^{\#}(H)(\vartheta\rho))}{\loc_{\mathfrak{p}_{3}} H^{1}(F,\Lambda_{\mathcal{O}}^{\#}(H)(\vartheta\rho))}   \rightarrow X^{\mathfrak{p}_{3},\mathfrak{p}_{4}}_{\mathfrak{p}_{1},\mathfrak{p}_{2}}(N_{\infty},W^{*}(1)) \rightarrow X^{\mathfrak{p}_{4}}_{\mathfrak{p}_{1},\mathfrak{p}_{2},\mathfrak{p}_{3}}(N_{\infty},W^{*}(1)) \rightarrow 0.
$$
\end{proof}

\section{Specialisations of Iwasawa modules}
In this section we let $\mathcal{O}$ be a finite extension of $\Zp$ and let $G$ be a group isomorphic to $\Z_{p}^{d}$ for some $d\geq 2$. We also let $\Gamma$ be a direct summand of $G$ which is isomorphic to $\Zp$ and put $H=G/\Gamma$. Given a finitely generated $\Lambda_{\mathcal{O}}(G)$-module $X$, and a continuous character $\rho:\Gamma \rightarrow \overline{\Q}_{p}^{\times}$, taking values in $\mathcal{O}'=\mathcal{O}[\rho]$ we can form the finitely generated $\Lambda_{\mathcal{O'}}(H)$-module $\Tw_{\rho}(X\otimes _{\mathcal{O}}\mathcal{O}')_{\Gamma}$. The main result of this section is Theorem \ref{charspec}, which shows that bounds for the characteristic ideal of $X$ can be recovered from bounds for the characteristic ideals of $\Tw_{\rho}(X)_{\Gamma}$, as $\rho$ varies over all continuous $\overline{\Q}_{p}$-valued continuous characters of $\Gamma$. The result is similar to Proposition 3.6 \cite{Och} and proved in the same way.

For notational purposes we present the results in this section in terms of modules over the power series ring  $\mathcal{O}[[T_{1},\ldots, T_{d}]]$. For any choice of topological generators $\gamma_{1},\ldots, \gamma_{d}$ for $G$, there is a unique continuous isomorphism $\varphi :\Lambda_{\mathcal{O}}(G) \rightarrow \mathcal{O}[[T_{1},\ldots, T_{d}]]$ such that $\varphi(\gamma_{i})=1+T_{i}$ for $i=1,\ldots,d$. We put $A=\mathcal{O}[[T_{1},...,T_{d-1}]]$ and $B=A[[t]]\cong \mathcal{O}[[T_{1},\ldots,T_{n}]]$. Given $\alpha \in 1+\mathfrak{m}$, where $\mathfrak{m}\subset \mathcal{O}$ is the maximal ideal,  we define the $A$-algebra homomorphism
\begin{align*}
\Tw_{\alpha} : B &\rightarrow B \\
f(t) &\mapsto f(\alpha(1+t)-1)
\end{align*}
for $f(t)\in A[[t]]=B$. 

Moreover for a $B$-module $X$ we define $\Tw_{\alpha}(X)$ to be the $B$-module with the same underlying abelian group as $X$, and multiplication defined by
$$
b\cdot x = \Tw_{\alpha}(b)x
$$
for $b\in B$ and $x\in X$.

We define $\pi : B \rightarrow A$ to be the natural map given by $\pi(f(t)) = f(0)$ for $f(t)\in A[[t]]=B$. We note that $\pi(\Tw_{\alpha}(f(t))) = f(\alpha -1)$. The following lemma describes the structure of $\Tw_{\alpha^{-1}}(X)$ in terms of that of $X$.

\begin{lemma}\label{twchar}
Suppose that $X$ is a finitely generated $B$-module, and $\alpha \in 1+\mathfrak{m}$. Then $X$ is a torsion $B$-module if and only if  $\Tw_{\alpha}(X)$ is a torsion $B$-module. Moreover we have
$$
\Char_{B}(\Tw_{\alpha^{-1}}(X))= \Tw_{\alpha}(\Char_{B}(X)). 
$$
\end{lemma}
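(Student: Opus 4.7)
The main observation is that $\Tw_\alpha : B \to B$ is a continuous $A$-algebra \emph{automorphism} of $B$, with inverse $\Tw_{\alpha^{-1}}$. First I would check that the substitution $t \mapsto \alpha(1+t) - 1$ defines an $A$-algebra endomorphism of $B = A[[t]]$: since $\alpha \in 1 + \mathfrak{m}$, the element $\alpha(1+t) - 1 = (\alpha - 1) + \alpha t$ lies in the maximal ideal of $B$, so substitution into any formal power series with coefficients in $A$ converges. A direct computation
\begin{equation*}
\Tw_\alpha\bigl(\Tw_{\alpha^{-1}}(f)\bigr)(t) = f\bigl(\alpha^{-1}(1 + \alpha(1+t) - 1) - 1\bigr) = f(t)
\end{equation*}
(and symmetrically in the other order) then shows that $\Tw_\alpha$ and $\Tw_{\alpha^{-1}}$ are mutually inverse.

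\medskip

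Next I would note that $X \mapsto \Tw_\alpha(X)$ is simply pullback along the ring automorphism $\Tw_\alpha$, and so yields an auto-equivalence of the category of $B$-modules, with quasi-inverse $X \mapsto \Tw_{\alpha^{-1}}(X)$. In particular, it preserves finite generation. It also preserves the class of torsion modules: if $0 \neq b \in B$ annihilates $x \in X$, then $\Tw_\alpha(b) \neq 0$ annihilates $x$ regarded as an element of $\Tw_{\alpha^{-1}}(X)$, and conversely. This establishes the first claim.

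\medskip

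For the characteristic ideal I would use the description
\begin{equation*}
\Char_B(X) = \prod_{\mathfrak{p}} \mathfrak{p}^{\ell_{B_\mathfrak{p}}(X_\mathfrak{p})},
\end{equation*}
where $\mathfrak{p}$ ranges over the height-one prime ideals of $B$; this is valid because $B$ is a regular Noetherian local ring, hence a UFD, and in particular a Krull domain. Writing $\sigma = \Tw_{\alpha^{-1}}$, the automorphism $\sigma$ induces a bijection $\mathfrak{p} \mapsto \sigma^{-1}(\mathfrak{p}) = \Tw_\alpha(\mathfrak{p})$ on the set of height-one primes, together with isomorphisms $B_{\Tw_\alpha(\mathfrak{p})} \xrightarrow{\sim} B_\mathfrak{p}$ under which the localization $\Tw_{\alpha^{-1}}(X)_{\Tw_\alpha(\mathfrak{p})}$ corresponds to $X_\mathfrak{p}$, with the same length. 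Taking the product yields
\begin{equation*}
\Char_B(\Tw_{\alpha^{-1}}(X)) = \prod_\mathfrak{p} \Tw_\alpha(\mathfrak{p})^{\ell_{B_\mathfrak{p}}(X_\mathfrak{p})} = \Tw_\alpha\bigl(\Char_B(X)\bigr).
\end{equation*}

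\medskip

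The argument is essentially bookkeeping; the only mildly delicate point is the compatibility of localizations and lengths under the automorphism, which is immediate once one recognises $\Tw_\alpha$ as a ring automorphism. An equivalent route would be to invoke the structure theorem for finitely generated torsion $B$-modules up to pseudo-isomorphism and reduce to the cyclic case, using $\Tw_{\alpha^{-1}}(B/(f^n)) \cong B/(\Tw_\alpha(f)^n)$, which follows directly from $\sigma^{-1}\bigl((f)\bigr) = (\Tw_\alpha(f))$.
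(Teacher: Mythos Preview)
The paper states this lemma without proof, evidently regarding it as routine. Your argument is correct and supplies exactly the details one would expect: recognising $\Tw_\alpha$ as a continuous $A$-algebra automorphism of $B$ with inverse $\Tw_{\alpha^{-1}}$, and then transporting the characteristic ideal along this automorphism either via the height-one prime description or (equivalently) via the structure theorem and the identity $\Tw_{\alpha^{-1}}(B/(f^n)) \cong B/(\Tw_\alpha(f)^n)$. There is nothing to compare against in the paper, and no gap in your reasoning.
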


\begin{lemma}\label{rank}
Suppose that X is a finitely generated $B$-module, and let $Y$ be its torsion submodule. Then we have 
$$
rank_{A}(X/tX) = rank_{B}(X) + rank_{A}(Y/tY).
$$
\end{lemma}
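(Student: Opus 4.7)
The plan is to reduce the claim to an identity involving only torsion $B$-modules via two applications of the snake lemma. First, consider the short exact sequence $0 \to Y \to X \to F \to 0$ with $F := X/Y$ torsion-free of rank $r := \mathrm{rank}_B(X)$. Since $B$ is a domain and $F$ is torsion-free, $t$ acts injectively on $F$, so applying the snake lemma to multiplication by $t$ yields the short exact sequence
\[
0 \to Y/tY \to X/tX \to F/tF \to 0.
\]
Taking $A$-ranks gives $\mathrm{rank}_A(X/tX) = \mathrm{rank}_A(Y/tY) + \mathrm{rank}_A(F/tF)$, so the lemma reduces to showing $\mathrm{rank}_A(F/tF) = r$ for any finitely generated torsion-free $B$-module $F$ of rank $r$.

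To handle this, I would choose a $\mathrm{Frac}(B)$-basis of $F \otimes_B \mathrm{Frac}(B)$ from elements of $F$, giving an embedding $B^r \hookrightarrow F$ with torsion cokernel $T$. A second application of the snake lemma to $0 \to B^r \to F \to T \to 0$ with multiplication by $t$ (and using $B^r[t] = F[t] = 0$) produces
\[
0 \to T[t] \to A^r \to F/tF \to T/tT \to 0.
\]
Taking $A$-ranks reduces the problem to proving the identity $\mathrm{rank}_A(T[t]) = \mathrm{rank}_A(T/tT)$ for any finitely generated torsion $B$-module $T$.

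For this final identity I would use the structure theorem for finitely generated torsion modules over the Noetherian UFD $B \cong \mathcal{O}[[T_1, \ldots, T_{d-1}, t]]$: there is a pseudo-isomorphism from $T$ to an elementary module $\bigoplus_i B/\mathfrak{p}_i^{e_i}$, where $\mathfrak{p}_i$ runs over height-one primes of $B$. By the snake lemma, the function $M \mapsto \mathrm{rank}_A(M/tM) - \mathrm{rank}_A(M[t])$ is additive on short exact sequences of finitely generated $B$-modules. Pseudo-null $B$-modules are annihilated by some element of $B$ outside $(t)$, so both $N/tN$ and $N[t]$ have vanishing $A$-rank; hence additivity reduces the claim to each summand $B/\mathfrak{p}^e$. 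If $\mathfrak{p} \neq (t)$, then $t$ is a non-zerodivisor on $B/\mathfrak{p}^e$, so $(B/\mathfrak{p}^e)[t] = 0$ while $(B/\mathfrak{p}^e)/t \cong A/\overline{\mathfrak{p}^e}$ is $A$-torsion; both ranks are $0$. If $\mathfrak{p} = (t)$, then $(B/t^e)[t] = t^{e-1}B/t^e B \cong A$ via multiplication by $t^{e-1}$, and $(B/t^e)/t \cong A$ directly; both ranks are $1$.

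The main obstacle is really this last step, the identity $\mathrm{rank}_A(T/tT) = \mathrm{rank}_A(T[t])$ on torsion modules; it requires the structure theorem and the case analysis of elementary modules, with care around the prime $\mathfrak{p}=(t)$ where the two quantities contribute nontrivially but cancel. The rest of the argument is formal.
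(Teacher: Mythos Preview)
Your argument is correct. The paper gives no proof of its own but cites Greenberg's Proposition~2, and your approach---snake-lemma reduction first to the torsion-free quotient $F$ and then to a torsion module $T$, followed by the structure theorem and the case split at the height-one prime $(t)$---is the standard route underlying that reference.
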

\begin{proof}
See the proof of \cite[Proposition 2]{G}. 
\end{proof}

The following proposition relates the characteristic ideal of $X$ as a $B$-module with the characteristic ideal of $X/tX$ as an $A$-module.  

\begin{prop}\label{spec} Suppose that $X$ is a finitely generated torsion $B$-module. Then we have the following equality of characteristic ideals
$$
ch_{A}(X_{t})\pi(ch_{B}(X)) = ch_{A}(X/tX),
$$
where $X_{t}:=\ker(X\xrightarrow{\times t} X)$. Moreover 
$$
ch_{A}(X_{t})=0 \iff \pi(ch_{B}(X)) =0 \iff ch_{A}(X/tX)=0.
$$
\end{prop}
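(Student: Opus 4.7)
The plan is to prove the identity by reducing to the case where $X$ is elementary (a direct sum of modules of the form $B/(f^a)$), verify it there, and then propagate back to $X$ via the snake lemma. Concretely, the structure theorem for finitely generated torsion $B$-modules produces a homomorphism $\varphi \colon X \to E := \bigoplus_i B/(f_i^{a_i})$ with pseudo-null kernel and cokernel, and with $\Char_B(X) = \Char_B(E) = (\prod_i f_i^{a_i})$. The identity and the three zero-equivalences will be checked first on $E$ and then transported to $X$.

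For each cyclic summand $B/(f^a)$ there are two cases. If $f$ is not associated to $t$, then by Weierstrass preparation $\pi(f) \neq 0$; since $B$ is a regular local ring, hence a UFD, $f^a$ and $t$ are coprime, so multiplication by $t$ is injective on $B/(f^a)$. Then $(B/(f^a))_t = 0$ and $(B/(f^a))/(t) \cong A/(\pi(f)^a)$, so the identity reads $1 \cdot (\pi(f^a)) = (\pi(f^a))$. If $f$ is associated to $t$, then $B/(t^a) \cong A^a$ as an $A$-module, and a direct computation shows that $(B/(t^a))_t$ and $(B/(t^a))/(t)$ are both isomorphic to $A$, so their $A$-characteristic ideals vanish, while $\pi(t^a) = 0$ as well. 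Summing over $i$ and noting that all three of $\Char_A(E_t) = 0$, $\Char_A(E/tE) = 0$ and $\pi(\Char_B(E)) = 0$ are equivalent to the presence of a summand $B/(t^a)$ in $E$, both the identity and the zero-equivalences hold on $E$.

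To transport these statements from $E$ to $X$, I would apply the snake lemma with multiplication-by-$t$ as the vertical map to the short exact sequences $0 \to \ker\varphi \to X \to \mathrm{im}\,\varphi \to 0$ and $0 \to \mathrm{im}\,\varphi \to E \to \mathrm{coker}\,\varphi \to 0$, obtaining two six-term exact sequences. A key auxiliary fact is that any pseudo-null $B$-module $K$ is $A$-torsion: Weierstrass preparation gives a distinguished polynomial in $\mathrm{ann}_B(K)$, and a resultant argument against a second element of $\mathrm{ann}_B(K)$ (available since the annihilator has height at least two and $B$ is a UFD) produces a nonzero element of $A \cap \mathrm{ann}_B(K)$. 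Granting this, the four-term sequence $0 \to K_t \to K \xrightarrow{t} K \to K/tK \to 0$ together with multiplicativity of characteristic ideals on torsion $A$-modules forces $\Char_A(K_t) = \Char_A(K/tK)$, so the pseudo-null contributions in the two six-term sequences cancel. What remains is the equality $\Char_A(X_t)/\Char_A(X/tX) = \Char_A(E_t)/\Char_A(E/tE)$, and combining this with $\pi(\Char_B(X)) = \pi(\Char_B(E))$ and the identity on $E$ gives the identity for $X$. The zero-equivalences for $X$ follow by the same snake-lemma argument using alternating sums of $A$-ranks in place of characteristic ideals: pseudo-null terms have $A$-rank zero, so $\mathrm{rank}_A(X_t) = \mathrm{rank}_A(E_t)$ and $\mathrm{rank}_A(X/tX) = \mathrm{rank}_A(E/tE)$, both of which are positive exactly when $E$ has a summand $B/(t^a)$. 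I expect the main obstacle to be the Weierstrass-plus-resultant argument showing that pseudo-null implies $A$-torsion; once that is in hand, the rest is formal bookkeeping with the snake lemma.
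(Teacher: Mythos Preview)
The paper does not prove this proposition itself; it simply cites \cite[Proposition~2.10]{BBL}. Your overall strategy --- reduce to an elementary module via the structure theorem, check the identity there, and transport it back through the snake lemma --- is the standard one and is presumably what that reference does.

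The step you single out as the main obstacle does, however, contain a genuine gap. You want to deduce $\Char_A(K_t)=\Char_A(K/tK)$ for a pseudo-null $K$ from the four-term sequence $0\to K_t\to K\xrightarrow{t} K\to K/tK\to 0$ and multiplicativity of $\Char_A$. For that cancellation you need $K$ itself to be a \emph{finitely generated} torsion $A$-module, and when $d\ge 3$ this fails. Take $K=B/(T_1,T_2)$: it is pseudo-null over $B$ and certainly $A$-torsion (killed by $T_1$), but as an $A$-module it is $\mathcal{O}[[t]]$ with $A$ acting through $A\twoheadrightarrow \mathcal{O}$, which is not finitely generated, so $\Char_A(K)$ is undefined. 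Your Weierstrass-plus-resultant idea cannot rescue this, because $\mathrm{ann}_B(K)=(T_1,T_2)\subset\mathfrak{m}_A B$ contains no $t$-regular element, so Weierstrass preparation is unavailable. Since the paper's application is precisely $d=3$, this is not an edge case you can set aside.

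The repair is to bypass $\Char_A(K)$ entirely. Observe that $K_t$ and $K/tK$ \emph{are} finitely generated torsion $A$-modules: they are finitely generated over $B$ and killed by $t$, and their $B$-annihilator strictly contains $(t)$ since a height-$\geq 2$ ideal cannot sit inside the height-one prime $(t)$. Then prove $\Char_A(K_t)=\Char_A(K/tK)$ by d\'evissage on $K$: filter $K$ by subquotients $B/\mathfrak{p}$ with $\mathrm{ht}(\mathfrak{p})\ge 2$ and check each piece directly (if $t\in\mathfrak{p}$ both sides equal $\Char_A(B/\mathfrak{p})$; if $t\notin\mathfrak{p}$ then $(B/\mathfrak{p})_t=0$ and $B/(\mathfrak{p}+(t))$ is pseudo-null over $A$, so both sides are $(1)$). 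With this in hand, the rest of your snake-lemma bookkeeping goes through as written.
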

\begin{proof}
See \cite[Proposition 2.10]{BBL}.
\end{proof}

It is necessary for us to extend scalars and consider twists of $X$ by characters valued in finite extensions of $\mathcal{O}$. Let $\mathcal{O}'$ be a finite extension of $\mathcal{O}$, and let $\mathfrak{m}'$ denote the maximal ideal of $\mathcal{O}'$. We define $A'=A\otimes_{\mathcal{O}}\mathcal{O}'$, $B'=B\otimes_{\mathcal{O}}\mathcal{O}'$, and for a $B$-module $X$ let $X'=X\otimes_{\mathcal{O}}\mathcal{O}'$. Then $B'= A'[[T]]$ and we let $\pi ' : B'\rightarrow A'$ denote the map $f(T)\mapsto f(0)$.  

\begin{theorem}\label{charspec} Let $X$ be a finitely generated $B$-module, and let $b\in B$. Suppose that for all finite extensions  $\mathcal{O}'$ of $\mathcal{O}$ and all $\alpha \in 1+\mathfrak{m}'$ we have
$$
\pi'(\Tw_{\alpha^{-1}}(b)) \in  \Char_{A'}(\Tw_{\alpha}(X') / t \Tw_{\alpha}(X')).
$$
Then $b\in \Char_{B}(X)$.
\end{theorem}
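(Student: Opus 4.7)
The plan is to split into two cases according to whether $X$ is $B$-torsion, and in the torsion case to translate the hypothesis via Lemma \ref{twchar} and Proposition \ref{spec} into a pointwise divisibility statement which can be lifted to $B$ by Weierstrass preparation, following the strategy of Ochiai's Proposition 3.6. If $X$ is not $B$-torsion, Lemma \ref{rank} implies $X/tX$ has positive $A$-rank; since flat base change and twisting preserve ranks, $\Tw_\alpha(X')/t\Tw_\alpha(X')$ has positive $A'$-rank for every $\mathcal{O}'$ and every $\alpha$, so its characteristic ideal over $A'$ vanishes. The hypothesis then forces $\pi'(\Tw_{\alpha^{-1}}(b)) = 0$, which unwinds to $b(\alpha^{-1} - 1) = 0 \in A'$ for every $\mathcal{O}'$ and every $\alpha \in 1 + \mathfrak{m}_{\mathcal{O}'}$. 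As $\alpha^{-1} - 1$ ranges over $\mathfrak{m}_{\overline{\mathcal{O}}}$, a coefficient-wise $p$-adic continuity argument applied to $b \in A[[t]]$ yields $b = 0 \in \Char_B(X) = (0)$.

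Assume henceforth $X$ is $B$-torsion, and write $\Char_B(X) = (f_0)$. By Lemma \ref{twchar}, $\Char_{B'}(\Tw_\alpha(X')) = \Tw_{\alpha^{-1}}(f_0) B'$, and $\pi'$ of this ideal is generated by $f_0(\alpha^{-1}-1)$. Substituting into Proposition \ref{spec} and comparing with the hypothesis yields
\[
b(\alpha^{-1} - 1) \in \Char_{A'}((\Tw_\alpha(X'))_t) \cdot f_0(\alpha^{-1} - 1) A',
\]
and since the second factor is principal this gives $f_0(\tau) \mid b(\tau)$ in $A'$ for every $\tau = \alpha^{-1} - 1 \in \mathfrak{m}_{\overline{\mathcal{O}}}$, as $\mathcal{O}'$ varies over all finite extensions of $\mathcal{O}$.

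To conclude $f_0 \mid b$ in $B$, I would proceed by Weierstrass preparation. After a linear change of coordinates $T_i \mapsto T_i + c_i t$ with suitable $c_i \in \mathcal{O}$, we may assume $f_0$ is $t$-regular, so $f_0 = \pi^a U P$ with $U \in B^\times$ and $P \in A[t]$ a distinguished polynomial of degree $r$ in $t$. A density argument on $\pi$-adic valuations (using $\tau$ of arbitrarily small positive $\pi$-valuation in highly ramified extensions $\mathcal{O}'/\mathcal{O}$) first gives $\pi^a \mid b$, so writing $b = \pi^a b'$ reduces the problem to $P \mid b'$. Weierstrass division then produces $b' = PQ + R$ with $R \in A[t]$ of $t$-degree $<r$, and the task becomes showing $R = 0$. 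Reducing the divisibility $P(\tau) \mid R(\tau)$ modulo $(T_1, \ldots, T_{d-1})$ gives $\overline{P}(\tau) \mid \overline{R}(\tau)$ in $\mathcal{O}'$, where $\overline{P}, \overline{R} \in \mathcal{O}[t]$ denote the images; a local analysis at each root $\beta \in \mathfrak{m}_{\overline{\mathcal{O}}}$ of $\overline{P}$ (letting $\tau \to \beta$ and comparing $\pi$-adic valuations of $\overline{P}(\tau)$ and $\overline{R}(\tau)$) forces $\overline{R}$ to vanish at $\beta$ to order at least the multiplicity of $\beta$, and since these multiplicities sum to $r > \deg_t \overline{R}$ we obtain $\overline{R} = 0$. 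An inductive argument along the $(T_1, \ldots, T_{d-1})$-adic filtration, iterating the same local-analysis step on each successive graded piece, then yields $R = 0$.

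The main obstacle is this Weierstrass step: while morally a multivariate $p$-adic density computation, it requires careful bookkeeping of how the $t$-adic behaviour of $P(\tau)$ near its roots interacts with the $(T_1, \ldots, T_{d-1})$-adic filtration on $A'$, together with crucial use of the freedom to take $\mathcal{O}'$ arbitrarily ramified over $\mathcal{O}$ in order to realise specialization points of all admissible $\pi$-adic valuations.
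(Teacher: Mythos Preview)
Your reduction---handling the non-torsion case via Lemma~\ref{rank}, and in the torsion case using Lemma~\ref{twchar} and Proposition~\ref{spec} to obtain $f_0(\tau)\mid b(\tau)$ in $A'$ for every $\tau\in\mathfrak{m}'$ and every $\mathcal{O}'$---is exactly the paper's argument.

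Where you diverge is the final divisibility step. The paper packages this as a separate lemma: if $f(\alpha_1,\ldots,\alpha_n)\mid g(\alpha_1,\ldots,\alpha_n)$ in $\mathcal{O}'$ for all $(\alpha_i)\in(\mathfrak{m}')^n$ and all $\mathcal{O}'$, then $f\mid g$ in $\mathcal{O}[[T_1,\ldots,T_n]]$. One first further specialises the $T_i$ (this is the ``converse'' the paper invokes) to pass from divisibility in $A'$ to divisibility in $\mathcal{O}'$; the lemma is then proved by induction on $n$. The base case $n=1$ is precisely your root-and-valuation analysis. For the inductive step, after a linear change of variables one arranges that $\mathcal{O}[[T_1,\ldots,T_n]]/(f)$ is finite flat over $\mathcal{O}[[T_1,\ldots,T_{n-1}]]$; it then injects into $\varprojlim_m\prod_{j=1}^m \mathcal{O}[[T_1,\ldots,T_n]]/(f,T_1-jp)$, so $f\mid g$ follows once $f(jp,T_2,\ldots,T_n)\mid g(jp,T_2,\ldots,T_n)$ for all $j\geq 0$, which is the $(n-1)$-variable statement.

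Your Weierstrass-division-plus-filtration route aims at the same conclusion, but the filtration step you sketch is the genuine gap. After showing $\overline R=0$, the relation $P(\tau)\mid R(\tau)$ in $A'$ does not transparently constrain the next graded piece of $R$, because $P(\tau)$ has nonunit image modulo $(T_1,\ldots,T_{d-1})$, so one cannot simply ``cancel'' it at the level of graded pieces. The easy repair---and effectively what the paper does---is to specialise the $T_i$ too rather than only setting them to zero: for arbitrary $\beta\in(\mathfrak{m}')^{d-1}$ the polynomial $P(\beta,t)$ is still distinguished of degree $r$, your one-variable argument gives $R(\beta,t)=0$, and then varying $\beta$ over all of $(\mathfrak{m}')^{d-1}$ (a straightforward iterated one-variable vanishing) forces $R=0$. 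This sidesteps the filtration bookkeeping entirely.
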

\begin{proof}
We may assume that $b\neq 0$, else there is nothing to prove. We first argue that if $b\neq 0$, then $X$ is a torsion $B$-module. By Lemma \ref{rank} it is sufficient to show that there exists $\alpha \in 1+\mathfrak{m}$ such that $\pi(\Tw_{\alpha^{-1}}(b))\neq 0$. We may write 
$$
b=\sum_{i_{1},\ldots, i_{d-1} \geq 0} g_{(i_{1},\ldots,i_{d-1})}(t)T_{1}^{i_{1}}\cdots T_{d-1}^{i_{d-1}}
$$
where $g_{(i_{1},\ldots,i_{d-1})}(t)\in \mathcal{O}[[t]]$. Note that for $\alpha \in 1+\mathfrak{m}$ we have 
$$
\pi(\Tw_{\alpha^{-1}}(b)) = \sum_{i_{1},\ldots, i_{d-1}} g_{(i_{1},\ldots,i_{d-1})}(\alpha^{-1} -1)T_{1}^{i_{1}}\cdots T_{d-1}^{i_{d-1}} \in A.
$$
By assumption there exists a $(d-1)$-tuple of non-negative integers $(i_{1},\ldots,i_{d-1})$ such that $ g_{(i_{1},\ldots,i_{d-1})}(t)\neq 0$. By the Weierstrass preparation theorem $g_{(i_{1},\ldots,i_{d-1})}(t)$ has only finitely many zeros in $\mathfrak{m}$. It follows that there exists $\alpha \in 1+\mathfrak{m}$ such that $\pi(\Tw_{\alpha^{-1}}(b))\neq 0$, and hence $X$ is a torsion $B$-module.  

	Suppose that $\Char_{B}(X) = (f(T_{1},\cdots,T_{d-1},t))$ and suppose $b=g(T_{1},\cdots,T_{d-1},t)$. By the assumption and Proposition \ref{spec}, we see that for all finite extensions  $\mathcal{O}'$ of $\mathcal{O}$ and all $\varepsilon \in \mathfrak{m}'$ we have
$$
f(T_{1},\ldots,T_{d-1}, \varepsilon) \text{ divides } g(T_{1},\ldots,T_{d-1}, \varepsilon).
$$
The result is then a consequence of the following lemma and its converse.
\end{proof}

\begin{lemma}
Let $n\geq 1$ and let $f,g \in \mathcal{O}[[T_{1},\ldots,T_{n}]]$. Suppose that for all finite extensions  $\mathcal{O}'$ of $\mathcal{O}$ and all $\alpha_{1},\ldots,\alpha_{n} \in \mathfrak{m}'$ we have $f(\alpha_{1},\ldots,\alpha_{n})$ divides $g(\alpha_{1},\ldots,\alpha_{n})$ in $\mathcal{O}'$. Then $f$ divides $g$ in $\mathcal{O}[[T_{1},\ldots,T_{n}]]$.
\end{lemma}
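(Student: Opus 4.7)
The idea is to reduce to a one-variable Weierstrass problem in $T_n$ and exploit $p$-adic valuations at the zeros of a distinguished polynomial. Begin with the trivial case $f = 0$, where the hypothesis forces $g(\alpha) = 0$ for all $\alpha \in (\mathfrak{m}')^n$; a routine induction on $n$ using Weierstrass preparation in one variable shows that no non-zero element of $\mathcal{O}[[T_1, \ldots, T_n]]$ can vanish identically on $\bigcup_{\mathcal{O}'} (\mathfrak{m}')^n$, forcing $g = 0$. Assume henceforth $f \neq 0$.

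By a substitution $T_i \mapsto T_i + T_n^{N_i}$ for $i < n$ with sufficiently large integers $N_i$, which induces a bijection of $(\mathfrak{m}')^n$ and therefore preserves the hypothesis, I may assume $f$ is regular in $T_n$. Weierstrass preparation then yields $f = u \cdot P$ with $u \in \mathcal{O}[[T_1, \ldots, T_n]]^{\times}$ and $P \in \mathcal{O}[[T_1, \ldots, T_{n-1}]][T_n]$ a distinguished polynomial of degree $d$, and Weierstrass division applied to $g$ gives $g = qP + r$ with $r \in \mathcal{O}[[T_1, \ldots, T_{n-1}]][T_n]$ of $T_n$-degree strictly less than $d$. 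Since $u(\alpha) \in (\mathcal{O}')^{\times}$ for every $\alpha \in (\mathfrak{m}')^n$, the hypothesis becomes $P(\alpha) \mid r(\alpha)$ in $\mathcal{O}'$, and the task reduces to proving $r = 0$.

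Fix $\underline{\alpha} = (\alpha_1, \ldots, \alpha_{n-1}) \in (\mathfrak{m}')^{n-1}$, and after enlarging $\mathcal{O}'$ if necessary, write $P(\underline{\alpha}, T_n) = \prod_i (T_n - \beta_i)^{m_i}$ with $\beta_i \in \mathfrak{m}'$ and $\sum_i m_i = d$. For each root $\beta_i$, pick $\beta \in \mathfrak{m}'$ close to $\beta_i$ but distinct from every $\beta_j$: then $v(P(\underline{\alpha}, \beta)) = m_i \cdot v(\beta - \beta_i) + O(1)$ as $\beta \to \beta_i$, while $v(r(\underline{\alpha}, \beta)) \geq v(P(\underline{\alpha}, \beta))$; letting $v(\beta - \beta_i) \to \infty$ forces $r(\underline{\alpha}, T_n)$ to vanish at $\beta_i$ to order at least $m_i$. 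Hence $P(\underline{\alpha}, T_n) \mid r(\underline{\alpha}, T_n)$ in $\mathcal{O}'[T_n]$, and the degree bound yields $r(\underline{\alpha}, T_n) = 0$. Varying $\underline{\alpha}$, each coefficient of $r$ (as a polynomial in $T_n$) vanishes on $\bigcup_{\mathcal{O}'} (\mathfrak{m}')^{n-1}$ and is therefore zero by the identity principle from the first paragraph, giving $r = 0$. The principal obstacle is the multiplicity argument at a repeated root, where the divisibility hypothesis (rather than mere vanishing) is crucially used; the substitution to enforce $T_n$-regularity is a standard but necessary preliminary when $\mathcal{O}/\mathfrak{m}$ is finite.
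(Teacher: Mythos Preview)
Your argument is correct and follows a genuinely different route from the paper's. The paper proceeds by induction on $n$: the base case $n=1$ is handled by essentially the same valuation argument you use (factor via Weierstrass preparation and compare valuations at points near each root to match multiplicities), while for $n\geq 2$ the paper makes $f$ a Weierstrass polynomial in $T_n$, observes that $\mathcal{O}[[T_1,\ldots,T_n]]/(f)$ is then finite flat over $\mathcal{O}[[T_1,\ldots,T_{n-1}]]$, and uses this flatness to show that $g\in(f)$ as soon as $g\in(f,T_1-jp)$ for every $j\geq 0$, each of which is an $(n-1)$-variable instance of the lemma. You instead specialise all of $T_1,\ldots,T_{n-1}$ simultaneously: after the same preparation step you perform Weierstrass division $g=qP+r$, run the one-variable valuation argument for each fixed $\underline{\alpha}$ to force $r(\underline{\alpha},T_n)=0$ by degree, and then invoke the identity principle to kill $r$. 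Your approach avoids the flatness argument and the induction on $n$ (beyond the elementary identity principle), at the cost of handling the repeated-root multiplicity step explicitly in the multivariable setting; the paper's approach confines the valuation argument to the honest one-variable case and lets flatness absorb the multivariable bookkeeping.
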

\begin{proof}
The lemma is proved by induction. Note that the statement is unchanged by passing to an extension of $\mathcal{O}$, making a linear change of variables, and by multiplying $f$ and $g$ by units. 

The case $n=1$ follows from the Weierstrass preparation theorem. Indeed suppose that $f(\alpha)$ divides $g(\alpha)$ for all $\alpha$ as above.  We may assume that $f$ is of the form
$$
f(T)=\pi^{m}(T-x_{1})^{e_{1}}\cdots (T-x_{k})^{e_{k}},
$$
where $\pi$ is a uniformiser of $\mathcal{O}$, $m,k\geq 0$, $x_{1},\ldots, x_{k}$ are distinct in  $\mathfrak{m} \subset \mathcal{O}$ and $e_{i}\geq 1$. Similarly we may suppose that
$$
g(T)=\pi^{n}(T-y_{1})^{f_{1}}\cdots (T-y_{l})^{f_{l}}
$$
with $n,l\geq 0$, $y_{1},\ldots, y_{l}$ distinct in $\mathfrak{m}$ and $f_{i} \geq 1$. 
Note that $v_{p}(f(\alpha)) \geq n$ for all $\alpha$, we $v_{p}$ is the discrete valuation on $\mathcal{O}$ normalised such that $v_{p}(\pi)=1$. However for all $0<\epsilon < \min_{i} v_{p}(y_{i})$ we can find $\alpha\in \overline{\Q}_{p}$  with $0<v_{p}(\alpha)<\epsilon$, and therefore we have 
$$
n v_{p}(\pi) \leq v_{p}(f(\alpha)) \leq v_{p}(g(\alpha)) = m v_{p}(\alpha) + \deg(g)\epsilon.
$$
Taking $\epsilon$ sufficiently small we conclude that $m\geq n$. By considering $\alpha = x_{i}$, we see that $l\geq k$ and after reordering, $x_{i}=y_{i}$ for $1\leq i \leq k$.  For $N>  \max_{2\leq i\leq l}v_{p}(x_{1}-y_{i})$ and for $\alpha = x_{1}+ p^{N}$ we have
$$
Ne_{1} \leq v_{p}(f(\alpha)) \leq v_{p}(g(\alpha)) \leq Nf_{1} + mv_{p}(\pi) + \deg(g) \max_{2\leq i\leq l}v_{p}(x_{1}-y_{i}). 
$$
By letting $N$ tend to infinity we see that $f_{1} \geq e_{1}$, and similarly $f_{i} \geq e_{i}$ for all $1\leq i \leq k$.

Now assume that $n\geq 2$. By extending scalars if necessary, making a linear change of variables, and multiplying by a unit we may assume that $f(T_{1},\ldots,T_{n}) = T_{n}^{r}+ b_{r-1}T_{n}^{r-1}+ \cdots + b_{0}$, where each $b_{i}$ belongs to the maximal ideal of $\mathcal{O}[[T_{1},\ldots, T_{n-1}]]$. For a proof see \cite[Lemma 3.8]{Och}.  The key remark is that $\mathcal{O}[[T_{1},\ldots,T_{n}]]/(f)$ is finite flat over $\mathcal{O}[[T_{1},\ldots,T_{n-1}]]$. 

For $j\geq 0$, let $I_{j}$ denote the ideal of $\mathcal{O}[[T_{1},\ldots,T_{n-1}]]$ generated by $T_{1}-jp$. Then for all $m\geq 0$, we have an injection
$$
\mathcal{O}[[T_{1},\ldots,T_{n-1}]]/I_{1}\cdots I_{m} \hookrightarrow \prod_{j=1}^{m} \mathcal{O}[[T_{1},\ldots,T_{n-1}]]/I_{j}.
$$
This remains injective after tensoring with $\mathcal{O}[[T_{1},\ldots,T_{n}]]/(f)$ over  $\mathcal{O}[[T_{1},\ldots,T_{n-1}]]$. In particular we have an injection
$$
\mathcal{O}[[T_{1},\ldots,T_{n}]]/(f,I_{1}\cdots I_{m}) \hookrightarrow \prod_{j=1}^{m} \mathcal{O}[[T_{1},\ldots,T_{n}]]/(f,I_{j})
$$
for all $m\geq 0$, and moreover 
$$
\mathcal{O}[[T_{1},\ldots,T_{n}]]/(f) \cong \varprojlim_{m\geq 0} \mathcal{O}[[T_{1},\ldots,T_{n}]]/(f,I_{1}\cdots I_{m}).
$$
Therefore $f$ divides $g$ if $g\in (f,I_{j})$ for all $j\geq 0$. Equivalently $f$ divides $g$ if $f(jp,T_{2},\ldots, T_{n})$ divides $g(jp,T_{2},\ldots, T_{n})$ in $\mathcal{O}[[T_{2},\ldots,T_{n}]]$ for all $j\geq 0$. We are now done by induction on $n$. 
\end{proof}

\section{Bounding Selmer groups}
We use the notation of sections \ref{seccon} and \ref{secsel}.
We may now prove the main theorem of this paper. 
\begin{theorem}\label{mainthm} Suppose that $\vartheta:G_{F}\rightarrow \mathcal{O}^{\times}$ is a continuous character satisfying $\Hyp(A)$ of section \ref{esat} and let $W=\mathbb{D}(\vartheta)$. Then  $X^{\mathfrak{p}_{2},\mathfrak{p}_{4}}_{\mathfrak{p}_{1},\mathfrak{p}_{3}}(F_{\infty},W^{*}(1))$ is a finitely generated torsion $\Lambda_{\mathcal{O}}(G)$-module and moreover
$$
\Char_{\Lambda_{\mathcal{O}}(G)} X^{\mathfrak{p}_{3},\mathfrak{p}_{4}}_{\mathfrak{p}_{1},\mathfrak{p}_{2}}(F_{\infty},W^{*}(1)) \text{ divides }   I(\psi)\delta_{c}L_{p,i}^{\psi,\psi'}.
$$
\end{theorem}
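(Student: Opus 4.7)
The plan is to apply Theorem~\ref{ESM} to every twist of $\vartheta$ by a continuous character of $\Gamma'$, combine each such bound with the explicit reciprocity law to obtain a divisibility in terms of a specialisation of the $p$-adic $L$-function, and then invoke Theorem~\ref{charspec} to assemble these pointwise divisibilities into a divisibility of characteristic ideals over $\Lambda_\mathcal{O}(G)$. Fix a splitting $G \cong H \times \Gamma'$ with $H = \Gal(N_\infty/F) \cong \Gamma \times \Gamma_{\cyc}$ as in Section~\ref{secsel}, choose a topological generator $\gamma'$ of $\Gamma'$, and identify $B := \Lambda_\mathcal{O}(G) = A[[t]]$ where $A = \Lambda_\mathcal{O}(H)$ and $t = \gamma' - 1$. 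Write $X := X^{\mathfrak{p}_3,\mathfrak{p}_4}_{\mathfrak{p}_1,\mathfrak{p}_2}(F_\infty, W^*(1))$.

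For a finite extension $\mathcal{O}'/\mathcal{O}$ and $\alpha \in 1 + \mathfrak{m}_{\mathcal{O}'}$, let $\rho : \Gamma' \to \mathcal{O}'^{\times}$ be the continuous character with $\rho(\gamma') = \alpha$, and put $W_\rho := \mathbb{D}(\vartheta\rho)$. Theorem~\ref{ESM} gives
\[ \Char_{A'}\bigl( X^{\mathfrak{p}_3,\mathfrak{p}_4}_{\mathfrak{p}_1,\mathfrak{p}_2}(N_\infty, W_\rho^*(1)) \bigr) \ \Big|\  \Char_{A'}\!\left( \frac{H^1(F_{\mathfrak{p}_3}, \Lambda_{\mathcal{O}'}^\#(H)(\vartheta\rho))}{\Lambda_{\mathcal{O}'}(H) \cdot \loc_{\mathfrak{p}_3}({}_c z(\vartheta\rho))} \right), \]
and the explicit reciprocity law furnishes an $A'$-linear map $\mathcal{L}$ sending $\loc_{\mathfrak{p}_3}({}_c z(\vartheta\rho))$ to $\pi(\Tw_{\rho^{-1}}(\delta_c L_{p,i}^{\psi,\psi'}))$ in $I(\psi)^{-1} \widehat{\otimes} \Lambda_{\mathcal{O}'}(\Gamma_{\cyc})$. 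Hence the right-hand quotient above has characteristic ideal dividing $I(\psi)\pi(\Tw_{\rho^{-1}}(\delta_c L_{p,i}^{\psi,\psi'}))$.

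To translate back to $F_\infty$, use the isomorphism $X^{\mathfrak{p}_3,\mathfrak{p}_4}_{\mathfrak{p}_1,\mathfrak{p}_2}(F_\infty, W_\rho^*(1))_{\Gamma'} \cong X^{\mathfrak{p}_3,\mathfrak{p}_4}_{\mathfrak{p}_1,\mathfrak{p}_2}(N_\infty, W_\rho^*(1))$ from Section~\ref{secsel} (valid because Hyp(A) forces $\overline\theta$ to be nontrivial), together with the standard twist identification $X^{\mathfrak{p}_3,\mathfrak{p}_4}_{\mathfrak{p}_1,\mathfrak{p}_2}(F_\infty, W_\rho^*(1)) \cong \Tw_\rho(X \otimes_\mathcal{O} \mathcal{O}')$ of $B'$-modules, which follows from $W_\rho^*(1) = W^*(1)(\rho^{-1})$ by dualising the coefficient twist on Galois cohomology. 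A direct computation shows that $\pi'(\Tw_{\alpha^{-1}}(b)) = \pi(\Tw_{\rho^{-1}}(b))$ for every $b \in B$ (both being the evaluation $\gamma' \mapsto \alpha^{-1}$), and that $\Tw_\alpha(X \otimes \mathcal{O}') / t \Tw_\alpha(X \otimes \mathcal{O}') \cong \Tw_\rho(X \otimes \mathcal{O}')_{\Gamma'}$. The bound thereby becomes
\[ \pi'(\Tw_{\alpha^{-1}}(I(\psi) \delta_c L_{p,i}^{\psi,\psi'})) \in \Char_{A'}\bigl( \Tw_\alpha(X \otimes \mathcal{O}') / t \Tw_\alpha(X \otimes \mathcal{O}') \bigr) \]
for every such $\alpha$. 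Theorem~\ref{charspec} then yields $I(\psi)\delta_c L_{p,i}^{\psi,\psi'} \in \Char_B(X)$, and the torsion assertion follows from the non-vanishing of $L_{p,i}^{\psi,\psi'}$ noted after its construction.

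The main technical hurdle lies in the compatibilities underlying the third paragraph: first, that Hyp(A) for $\vartheta$ is inherited by every twist $\vartheta\rho$, which reduces to the fact that $\rho$ is pro-$p$-valued so $\overline{\vartheta\rho} = \overline\vartheta$ and the hypotheses of Theorem~\ref{ind} (and hence of Theorem~\ref{ESM}) depend only on the residual representation; and second, that the abstract operation $\Tw_\rho$ on the $B$-module $X$ defined in Section~\ref{esandtw} corresponds precisely, under the Selmer-group identifications, to the Galois-cohomological twist of coefficients by $\rho^{-1}$ coupled with Pontryagin duality. A minor bookkeeping issue is that Theorem~\ref{ESM} is stated over $\Lambda_\mathcal{O}(H)$ whereas we apply it over $\Lambda_{\mathcal{O}'}(H)$; this is a routine extension of scalars.
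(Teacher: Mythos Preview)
Your approach matches the paper's: apply Theorem~\ref{ESM} for each twist by a character of $\Gamma'$, feed in the explicit reciprocity law, use the control result of Section~\ref{secsel} to identify the $N_\infty$-Selmer group with $\Tw_\rho(X)_{\Gamma'}$, and conclude via Theorem~\ref{charspec}. Your bookkeeping on the twist identifications and scalar extensions is also in line with what the paper does (tersely).

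There is, however, one genuine gap. From the explicit reciprocity law you only obtain an $A'$-linear map
\[
\lambda\cdot\mathcal{L}:\; H^{1}\!\bigl(F_{\mathfrak{p}_{3}},\Lambda_{\mathcal{O}'}^{\#}(H)(\vartheta\rho)\bigr)\longrightarrow \Lambda_{\mathcal{O}'}(H)
\]
carrying $\loc_{\mathfrak{p}_{3}}({}_{c}z(\vartheta\rho))$ to $\pi\bigl(\Tw_{\rho^{-1}}(\lambda\delta_{c}L_{p,i}^{\psi,\psi'})\bigr)$. You then assert that this forces
$\Char\bigl(H^{1}/\Lambda\cdot\loc_{\mathfrak{p}_{3}}({}_{c}z)\bigr)$ to divide $I(\psi)\,\pi(\Tw_{\rho^{-1}}(\delta_{c}L_{p,i}^{\psi,\psi'}))$. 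That implication is false for a general source module: an $A'$-map $M\to A'$ with $z\mapsto b$ does not by itself bound $\Char(M/A'z)$ in terms of $b$ (take $M=A'^{2}$, $z=(1,0)$, $\phi=\mathrm{pr}_{1}$). What is needed, and what the paper supplies, is that $H^{1}(F_{\mathfrak{p}_{3}},\Lambda_{\mathcal{O}'}^{\#}(H)(\vartheta\rho))$ is \emph{torsion-free of rank one} over $\Lambda_{\mathcal{O}'}(H)$. The rank-one statement comes from Tate local duality; torsion-freeness is shown directly, using that the decomposition group at $\mathfrak{p}_{3}$ inside $H$ is of rank two (so one can pick two elements $\sigma,\sigma'$ of $G_{F_{\mathfrak{p}_{3}}}$ for which $(\vartheta\rho(\sigma)\sigma_{H}^{-1}-1)$ and $(\vartheta\rho(\sigma')\sigma_{H}^{\prime-1}-1)$ are coprime in $\Lambda_{\mathcal{O}'}(H)$, forcing any $f$-torsion element to lie in $f\Lambda$). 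Once this is known, $\lambda\cdot\mathcal{L}$ is injective and one gets the exact identity
\[
\Char\!\left(\frac{H^{1}}{\Lambda\cdot\loc_{\mathfrak{p}_{3}}({}_{c}z)}\right)\cdot\Char\bigl(\coker(\lambda\cdot\mathcal{L})\bigr)
=\bigl(\pi(\Tw_{\rho^{-1}}(\lambda\delta_{c}L_{p,i}^{\psi,\psi'}))\bigr),
\]
from which your claimed divisibility follows. You should insert this step; it is the only missing ingredient.
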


\begin{proof}
For all continuous characters $\rho : \Gamma' \rightarrow \mathcal{O}^{\times}$, we have 
\begin{multline*}
\Char_{\Lambda_{\mathcal{O}}(H)}\left( X^{\mathfrak{p}_{3},\mathfrak{p}_{4}}_{\mathfrak{p}_{1},\mathfrak{p}_{2}}(N_{\infty},W(\rho)^{*}(1))\right) \text{ divides } \\
\Char_{\Lambda_{\mathcal{O}}(H)} \left( H^{1}(F_{\mathfrak{p}_{3}}, \Lambda_{\mathcal{O}}^{\#}(H)(\vartheta\rho)) / \Lambda_{\mathcal{O}}(H) \loc_{\mathfrak{p}_{3}}({}_{c}z(\vartheta\rho)) \right)
\end{multline*}
by Theorem \ref{ESM}. Furthermore for any $\lambda\in I(\psi) \subset \Lambda_{\mathcal{O}}(\Gamma)$ we have we have a $\Lambda_{\mathcal{O}}(H)$-homomorphism 
\begin{equation}\label{erlrho}
\lambda \cdot \mathcal{L} : H^{1}(F_{\mathfrak{p}_{3}}, \Lambda_{\mathcal{O}}^{\#}(H)(\vartheta\rho)) \rightarrow \Lambda_{\mathcal{O}}(H)
\end{equation}
  such that the image of $\loc_{\mathfrak{p}_{3}}({}_{c}z(\vartheta\rho))$ is
  $\pi( \Tw_{\rho^{-1}}(\lambda\delta_{c}L_{p,i}^{\psi,\psi'}))$. Here $\pi$ is the projection $\Lambda_{\mathcal{O}}(G)\rightarrow  \Lambda_{\mathcal{O}}(H)$. By applying Tate's local duality formula, one can show that $H^{1}(F_{\mathfrak{p}_{3}}, \Lambda_{\mathcal{O}}^{\#}(H)(\vartheta\rho))$ is a rank one $\Lambda_{\mathcal{O}}(H)$-module. Furthermore $H^{1}(F_{\mathfrak{p}_{3}}, \Lambda^{\#}_{\mathcal{O}}(H)(\vartheta\rho))$ is torsion-free. Indeed it suffices to prove that for each $0\neq f\in \Lambda_{\mathcal{O}}(H)$, that $H^{0}(F_{\mathfrak{p}_{3}},(\Lambda^{\#}_{\mathcal{O}}(H)/ \Lambda^{\#}_{\mathcal{O}}(H)f) (\vartheta\rho))$ is trivial. Suppose that $\lambda\in \Lambda_{\mathcal{O}}(H)$ represents an element of $H^{0}(F_{\mathfrak{p}_{3}},(\Lambda^{\#}_{\mathcal{O}}(H)/ \Lambda^{\#}_{\mathcal{O}}(H)f) (\vartheta\rho))$. Then for all  we have 
\begin{equation}\label{torfree}
(\vartheta\rho(\sigma)\sigma_{H}^{-1} -1)\lambda \in \Lambda^{\#}_{\mathcal{O}}(H)f \text{ for all }\sigma \in G_{F_{\mathfrak{p}_{3}}},
\end{equation}
where $\sigma_{H}$ is the restriction of $\sigma$ to $H$. Note that $\mathfrak{p}_{3}$ splits finitely inside $N_{\infty}/F$, and so the decomposition group at $\mathfrak{p}_{3}$ inside $H$ is isomorphic to $\mathbb{Z}_{p}^{2}$. It follows that we can choose $\sigma,\sigma' \in  G_{F_{\mathfrak{p}_{3}}}$ such that $(\vartheta\rho(\sigma)\sigma_{H}^{-1} -1)$ and $(\vartheta\rho(\sigma')\sigma_{H}^{\prime -1} -1)$ are coprime. Since $\Lambda_{\mathcal{O}}(H)$ is a Krull domain, \eqref{torfree} implies that $\lambda\in \Lambda_{\mathcal{O}}(H)f$.

 It follows that \eqref{erlrho} is injective and  
\begin{multline*}\Char_{\Lambda_{\mathcal{O}}(H)} \left( H^{1}(F_{\mathfrak{p}_{3}}, \Lambda_{\mathcal{O}}^{\#}(H)(\vartheta\rho)) / \Lambda_{\mathcal{O}}(H) \loc_{\mathfrak{p}_{3}}({}_{c}z(\vartheta\rho)) \right) \Char_{\Lambda_{\mathcal{O}}(H)}(\coker(\lambda\cdot \mathcal{L}) \\ = \pi( \Tw_{\rho^{-1}}(\lambda\delta_{c}L_{p,i}^{\psi,\psi'})). 
\end{multline*}

Since we are free to extend scalars and choose $\rho:\Gamma' \rightarrow \overline{\Q}_{p}^{\times}$ to be an arbitrary continuous character, the theorem follows from Theorem \ref{charspec}.
\end{proof}

To obtain the result stated in the introduction, we apply the above theorem to the character $\vartheta = \theta^{-1}\omega$ to obtain a bound for the characteristic ideal of $X^{\mathfrak{p}_{3},\mathfrak{p}_{4}}_{\mathfrak{p}_{1},\mathfrak{p}_{2}}(F_{\infty},\mathbb{D}(\theta\omega^{-1})(1))$. Twisting  this $\Lambda_{\mathcal{O}}(G)$-module by a character of $\Gamma_{cyc}$, we obtain $X^{\mathfrak{p}_{3},\mathfrak{p}_{4}}_{\mathfrak{p}_{1},\mathfrak{p}_{2}}(F_{\infty},\mathbb{D}(\theta))$ which is described in Example \ref{mainex}, and we thus obtain a bound for its characteristic ideal by Lemma \ref{twchar}. 

Finally, we remark that if \eqref{hypc} is satisfied, then as in as in section \ref{removec}, we may remove the term $\delta_{c}$ from the statement of the above theorem.

\end{document}